\documentclass[10pt,twoside,reqno,final]{manuscript}
\usepackage[numbers,sort&compress]{natbib}

\usepackage[notcite,notref]{showkeys}

\usepackage{cleveref}
\usepackage{indentfirst} 
\usepackage{pgfplots}
\pgfplotsset{compat=1.18}
\usepackage{forest}
\usepackage{tikz}
\usepackage{layout} % see the size of the paper used
\usepackage{transparent} % `input pdf_tex (induced by Inkescape)' needed

\usepackage{graphicx} % adding `pdf' as pictures

\usepackage{bm} % better bold-symbol

\usepackage{leftidx} % left subindex
\usepackage{cases} % multilabel in numcases enviroment
\usepackage{mathabx} % for `\widebar'
\usepackage{amssymb} % for `\symdif'

\hypersetup{
  colorlinks=true,
  linkcolor=blue,
  citecolor=blue,
  filecolor=blue,
  urlcolor=blue
}
\graphicspath{{./figures/}}

\newcommand{\dd}{\mathop{}\!\mathrm{d}}

\newcommand{\floor}[1]{\lfloor {#1} \rfloor}

    \newcommand{\mat}[2][p]{%
      \ifx#1p \begin{pmatrix} #2 \end{pmatrix}%   ( )
      \else\ifx#1b \begin{bmatrix} #2 \end{bmatrix}%   [ ]
      \else\ifx#1B \begin{Bmatrix} #2 \end{Bmatrix}%   { }
      \else\ifx#1v \begin{vmatrix} #2 \end{vmatrix}%   | |
      \else\ifx#1V \begin{Vmatrix} #2 \end{Vmatrix}%   || ||
      \else \begin{matrix} #2 \end{matrix}%   no surrondings
      \fi\fi\fi\fi\fi
    }

\DeclareMathOperator{\spn}{span}
\newcommand{\inner}[1]{\left\langle #1 \right\rangle}

\newcommand{\abs}[1]{\left\lvert#1\right\rvert}

\newcommand{\R}{\mathbb{R}} % real numbers
\newcommand{\Q}{\mathbb{Q}} % rational numbers
\newcommand{\N}{\mathbb{N}} % natural numbers
\renewcommand{\S}{\mathbb{S}} % sphere 
\renewcommand{\P}{\mathbb{P}} % projective space
\newcommand{\ii}{\mathbf{i}} \newcommand{\jj}{\mathbf{j}} \newcommand{\kk}{\mathbf{k}}% for binary index
\newcommand{\HH}{\mathcal{H}} % \mathcal{H}
\newcommand{\LL}{\mathcal{L}} % \mathcal{L}
\newcommand{\GG}{\mathcal{G}} % \mathcal{G} for grassmannian
\renewcommand{\AA}{\mathcal{A}} % \mathcal{A} for affine grassmannian %%%%%% renewcommand!
\newcommand{\RR}{\mathcal{R}} % for rotation set
\newcommand{\II}{\mathcal{I}} % for isometry set
\newcommand{\aA}{\mathfrak{a}} % for projective axis
\newcommand{\cC}{\mathfrak{c}} % for projective center
\newcommand{\wt}[1]{\widetilde{#1}} % for widetilde
\newcommand{\wh}[1]{\widehat{#1}} % for widetilde

\newcommand{\Tt}{\mathsf{T}} % transpose operator
\newcommand{\Rect}{\mathrm{Rect}^{n-1}(\mathbb{R}^n)} %(n-1)-rectifiable set
\newcommand{\inte}{\mathrm{int}}
\newcommand{\cl}{\mathrm{cl}}
\newcommand{\SO}{\mathrm{SO}} % special orthogonal group
\newcommand{\Isom}{\mathrm{Isom}^{+}} % oriention-preserving isometry group
\newcommand{\GL}{\mathrm{GL}} % general linear group
\newcommand{\id}{\mathrm{id}} % identity transform
\newcommand{\aff}{\mathrm{aff}} % affine part
\newcommand{\Aff}{\mathrm{Aff}^{+}} % affine transforms
\newcommand{\Sim}{\mathrm{Sim}^{+}} % similarity transforms

\newcommand{\sign}{\operatorname{sign}} % sign function

\newcommand{\mydot}{\raisebox{0.3ex}{\scalebox{0.75}{\textbullet}}}

\let\oldthebibliography\thebibliography
\renewcommand{\thebibliography}[1]{%
  \oldthebibliography{#1}%
  \setlength{\parskip}{1pt}%       % extra verticle between items
  \setlength{\itemsep}{1pt}%       % verticle between items
  \setlength{\parsep}{1pt}%        % verticle between paragraph in an item
}
\pdfoutput=1

\title{A Study on Kakeya Needle Problem for $(n-1)$-Rectifiable Sets\thanks{
        This work is supported by the National Key R\&D Program of China (No.2023YFA1010800); the Natural Science Foundation of China (No.12271435, No.12301113).

        The authors would like to thank Prof. Bochen Liu for his valuable discussions and suggestions, which greatly improved the quality of this paper.}}

\shorttitle{KAKEYA NEEDLE PROBLEM FOR $(n-1)$-RECTIFIABLE SETS}

\author[1]{Wenjuan Li}
\author[1]{Hongqiang Wang}
\author[2]{Huiju Wang}
\affil[1]{School of Mathematics and Statistics, Northwestern Polytechnical University, Xi'an, 710129, China}
\affil[2]{School of Mathematics and Statistics, Henan University, Kaifeng, 475000, China}

\shortauthor{Wenjuan Li, Hongqiang Wang, and Huiju Wang}

\date{}

\begin{document}

\maketitle
\thispagestyle{firststyle}

\begin{abstract}
In this article we study the analog of Kakeya needle problem for $(n-1)$-rectifiable sets in $\mathbb{R}^n$ and construct the related Nikodym type sets.

The novelty of our approach lies in combining three ingredients: 
the two-dimensional Venetian blind-type construction for isometries in $\mathbb{R}^n$;
the normal geometry of a $(n-1)$-rectifiable set viewed in the projective setting;  
and measure estimates of moving $(n-1)$-rectifiable sets by isometries in $\mathbb{R}^n$.
Together, these ingredients enable us to move $(n-1)$-rectifiable sets along paths of isometries in $\mathbb{R}^n$ and cover a Lebesgue null set.

\keywords{Kakeya needle problem, Nikodym type set, $(n-1)$-rectifiable set}
\MSC{28A75}
\end{abstract}

%%%%%%%%% Contents %%%%%%%%%

\hypersetup{linkcolor=black}
\tableofcontents
\hypersetup{linkcolor=blue}

%%%%%%%%%% main body %%%%%%%%%%

\section{Introduction}

We introduce the analog of Kakeya needle problem and related Nikodym type sets associated with $(n-1)$-rectifiable sets in $\R^n$.
We always assume $n \ge 2$ in this article.

\subsection{Kakeya needle problem}\label{ssect:intro-Kakeya}

The classical Kakeya needle problem in $\R^2$, as stated in \cite{Kak1917}, asks whether a needle (a line segment) can be continuously turned around using translations and rotations such that the covered set during the motion has arbitrarily small $\HH^2$-measure.
Besicovitch \cite{Bes1919,Bes1927} showed this is possible and constructed a set $K \subset \R^2$ of measure zero containing a unit line segment in every direction of $\S^1$.
Such sets are now well known as planar Besicovitch sets, or Kakeya sets in $\R^2$. 
The Kakeya set in $\R^n$ is defined as an $\HH^n$-null set containing a unit line segment of every direction in $\S^{n-1}$.
An example of the Kakeya set in $\R^n$ is $K\times\R^{n-2}$, where $K$ is a Kakeya set in $\R^2$.
See \cite{RFM2026} for a construction of Kakeya sets in $\R^n (n \ge 2)$ using the generalized Perron tree method.

The Kakeya needle problem can be generalized to the question of continuously moving a given set to a new position by translations and rotations, 
while requiring the covered set to be as small as possible. 
In \cite{CsHeLa2018}, Csörnyei, Héra and Laczkovich proved that if a closed set in $\R^2$ can be continuously moved to a new position 
and covers an area of arbitrarily small measure (which they call the Kakeya property), 
then its non-trivial connected components must be subsets of parallel lines or of concentric circles.
Moreover, they proved that if a closed connected set can be moved to any position with an arbitrarily small area covered (which they define as the strong Kakeya property), 
then it must be a line segment, a circular arc or a singleton.
Under the condition that the whole set is required to move, only very special sets can be moved to new positions such that the sets they covered during the motion are of measure zero.
Consequently, several authors have sought to relax the condition so that more sets can be moved.

In the planar case, Chang and Csörnyei \cite[Section 5]{ChCs2019} showed that all rectifiable curves in $\R^2$ can be moved continuously to any (translated or rotated) position, 
covering a set of zero Lebesgue measure, 
provided that at each stage of the motion one is allowed to delete an $\HH^1$-null subset from the translated or rotated copy of $E$.
This is a measure-theoretic analog of the Kakeya needle problem for planar curves, and only subsets with the same $\HH^1$-measure (equimeasurable subsets) are moved in the process.
In higher dimensions, Chang, Dosidis and Kim \cite{ChDoKi2022NikodymSA} showed that 
the unit sphere $\S^{n-1}$ can be moved, in the measure-theoretic sense, to any translated or rotated position in $\R^n$.
Their construction exploits the rotational symmetry of the sphere; in fact, it suffices to consider translations in a two-dimensional subspace.

Inspired by the pioneering works \cite{ChCs2019} on planar rectifiable curves and \cite{ChDoKi2022NikodymSA} on spheres in $\R^n$,
we consider a similar problem of moving other sets to arbitrary positions in $\R^n$.
The natural higher-dimensional analog of planar rotations and translations is the concept of orientation-preserving isometries in $\R^n$,
which we simply call isometries and denote by $\Isom(\R^n)$. 
More precisely, we seek an $(n-1)$-rectifiable set $E$ other than the sphere 
which can be moved (using isometries) to $\iota(E)$ for any $\iota \in \Isom(\R^n)$ and cover a set $F$ of $\HH^n$-measure zero, 
after possibly deleting an $\HH^{n-1}$-null subset from the isometric copy of $E$ for each position in the motion.
The $\HH^n$-null set $F$ covered during the motion is called a \emph{Besicovitch set from $E$ to $\iota(E)$}.
If for every $\iota \in \Isom(\R^n)$, there exists a Besicovitch set from $E$ to $\iota(E)$, then we call $E$ \emph{$\Isom(\R^n)$-Kakeya movable}.
The $\Isom(\R^n)$-Kakeya movable property is a higher-dimensional measure-theoretic analog of the strong Kakeya property introduced in \cite{CsHeLa2018}.

We prove that every $(n-1)$-rectifiable set with finite $\HH^{n-1}$-measure is $\Isom(\R^n)$-Kakeya movable, 
and construct the corresponding Besicovitch sets.
Our main theorem is the following; the relevant definitions can be found in \autoref{sect:preliminaries}.

\begin{theorem}\label{thm:global_Kak_isom}
  Suppose $E$ is an $(n-1)$-rectifiable set in $\R^n$ with $\HH^{n-1}(E)<\infty$.
  Then for any $\iota \in \Isom(\R^n)$, there exist a continuous path $P_{\iota} \subset \Isom(\R^n)$ connecting $\id$ and $\iota$, 
  and a collection $\{E_p\}_{p \in P_{\iota}}$ of subsets of $E$ satisfying $\HH^{n-1}(E_p) = \HH^{n-1}(E)$ for all $p \in P_{\iota}$, 
  such that 
  \begin{align*}
    \bgg[v]{\bigcup_{p \in P_{\iota}} p(E_p)} = 0.
  \end{align*}
  From the arbitrariness of $\iota$, we have that $E$ is $\Isom(\R^n)$-Kakeya movable.
\end{theorem}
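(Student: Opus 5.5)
The strategy is to reduce the global statement to a local one and then chain local motions together. First, we decompose $E$ up to an $\HH^{n-1}$-null set into countably many pieces, each contained in a $C^1$ hypersurface patch with controlled normal. The essential use of this decomposition is that for $\HH^{n-1}$-a.e.\ $x\in E$ an approximate tangent hyperplane exists. Recording its normal as an element of the projective space $\P^{n-1}$ yields the ``normal geometry'' of $E$.

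Next, $\Isom(\R^n)$ is path connected and generated by one-parameter families of elementary motions. These are translations in a fixed direction, and rotations in a fixed $2$-plane $\spn\{e_i,e_j\}$ about a fixed center. It therefore suffices to prove the following local statement.

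\emph{Local claim.} For every isometry $\iota_0$ and every elementary one-parameter path $t\mapsto g_t$, $t\in[0,1]$, there exist a continuous path $P\subset\Isom(\R^n)$ from $\iota_0$ to $g_1\iota_0$ and full-measure subsets $E_p\subset E$ such that $\HH^n\bigl(\bigcup_{p\in P}p(E_p)\bigr)$ is arbitrarily small.

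Concatenating finitely many such paths then gives small total measure. To get measure exactly zero we use the standard Baire/limiting device: we refine countably many times with errors $\varepsilon_k\to0$ and intersect. Concretely, we build paths $P^{(k)}$ with covered sets $F_k$, $\HH^n(F_k)<2^{-k}$, such that $P^{(k+1)}$ stays uniformly close to $P^{(k)}$. The limit path $P$ then has covered set contained in $\limsup F_k$ up to null sets. Alternatively, we take $F=\bigcap_k F_k$ and verify that it still contains a full-measure copy of $E$ at each time, by choosing the deleted subsets $E_p$ compatibly.

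The local claim is proved with a Venetian blind construction in the two-dimensional direction of motion, with the normals controlling the losses. Write $g_t$ as acting in a $2$-plane $V$. For a small patch of $E$ whose normals lie near a fixed $\nu\in\P^{n-1}$, moving the patch tangentially to itself, meaning along directions in $\nu^\perp$, sweeps very little volume. This is the measure estimate for moving rectifiable sets: the swept volume is bounded by roughly $\int_E \abs{\langle v(x),\nu(x)\rangle}\,\dd\HH^{n-1}$ times the time, where $v$ is the velocity field.

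Translating in direction $e\in V$, we split $E$ according to the angle between $e$ and the normal. The pieces with $e$ nearly tangent cost little. For the remaining pieces we approximate the straight translation by a zigzag path of many short moves in directions $e'$ tangent to the piece, in the spirit of Chang--Csörnyei's planar argument. Since different pieces need different zigzags, we follow the trick of deleting null sets. We partition the time interval and the pieces into small sub-blocks. During each short time window, only the sub-blocks whose normal is adapted to the current direction are kept, and the others are deleted. The deleted set must stay $\HH^{n-1}$-null at each time, so instead we use the Venetian blind self-similar iteration. In this iteration the covered volume of each piece is a sum of a small tangential sweep and a sum of small shadows, and it tends to $0$ under iteration, as in the two-dimensional Besicovitch/Perron tree estimates lifted to $\R^n$ by the product structure $V\times V^\perp$. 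Rotations are handled in the same way, using that the velocity of a rotation about a point is linear, so locally it is again close to a translation on each small patch.

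\textbf{Main obstacle.} The key difficulty is that on an arbitrary rectifiable set the normals vary over all of $\P^{n-1}$, so no single path is tangent to everything. The required motion must keep every piece at full $\HH^{n-1}$-measure at every time while each piece takes its own zigzag. I would first try to handle this as follows. Project the normal $\nu(x)$ to $V$. Points whose normal is orthogonal to $V$ move tangentially for free. For the others, the problem reduces to a planar curve problem in the slices $x+V$, to which the planar Chang--Csörnyei rectifiable-curve result (applied fibrewise, with uniformity in the fibre parameter) can be applied, and Fubini then gives the $\HH^n$-null conclusion. Making that fibrewise application uniform enough to use a single common path $P$ for all slices is where I expect the real work to lie.
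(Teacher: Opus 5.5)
Your outer architecture matches the paper's. The paper also reduces to motions inside one $2$-plane $V$: it writes $\iota$ as at most $\lfloor n/2\rfloor+2$ simple rotations and concatenates the local paths. It likewise bounds the swept volume by the area formula and uses that points with normal orthogonal to $V$ move for free.

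The gap is the local claim itself, above all for rotations. Approximating a rotation ``locally by a translation'' cannot work, because the path must end exactly at a map whose linear part is a nontrivial rotation. So the zigzag has to alternate between \emph{rotations about different centers} (and translations). This needs three ingredients:
\begin{itemize}
\item the parametrization of $\Isom(V)$ by $V\times\R$, $\RR^{\theta}_{z,V}\mapsto(\theta z,\theta)$;
\item the fact that composing many small isometries approximates vector addition in these coordinates;
\item the observation that a rotation is cheap on $x$ exactly when its axis $z+V^{\perp}$ meets the normal line $n_x$, since the swept volume is at most $\abs{\theta}\int_E\abs{P_V\wh{N}(x)}\,d(n_x,z+V^{\perp})\,\dd\HH^{n-1}$.
\end{itemize}

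The incidence fact that makes the Venetian blind work is the following. Choose a projective line $l\subset\P^2_{V\times\R}$ through the projective center of $\iota$. Its lift $\aA(l)$ is a projective hyperplane of $\P^n$, so every projective normal line $[n_x:1]$ meets it; that is, every $x$ has an adapted center on $l$. Your translation zigzag is the special case where $l$ is the line at infinity.

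This also removes your ``main obstacle''. Every element of $\Isom(V)$ acts on all fibres $y+V$ by the same planar isometry, and the planar condition $z\in P_V(n_x)$ is exactly $n_x\cap(z+V^{\perp})\neq\emptyset$. So instead of applying the planar theorem slice by slice as a black box, you run the Chang--Cs\"{o}rnyei construction once on $E$ itself with this projective incidence, and a common path comes for free.

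The passage from ``small'' to ``zero'' is also not correctly set up. At every finite stage the construction discards a set that in general has positive $\HH^{n-1}$-measure: the points whose projective normal lines meet $\aA(\widebar{B}_l(u,\varepsilon_k))$ for the current center $u$. So your local claim, with full-measure $E_p$ at each stage, is not what a Venetian blind produces. Nothing in your proposal justifies that the limit path's covered set lies in $\limsup F_k$ or $\bigcap_k F_k$.

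What is needed instead:
\begin{itemize}
\item Keep the level-$k$ moved sets closed.
\item Use the small-neighbourhood lemma so that their cost, of order $\varepsilon_k$ with $\varepsilon_k$ summable, survives every later refinement and the limit path.
\item Take $E_p$ to be the $\limsup$ over $k$ of the level-$k$ moved sets attached to $p$.
\item Note that $E\setminus E_p\subset E(u_p):=\{x\colon [n_x:1]\cap\aA(u_p)\neq\emptyset\}$ for a limiting center $u_p$.
\end{itemize}

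Full measure then needs a counting argument that your proposal lacks. After discarding points with normal direction in $V^{\perp}$, $E(u)\cap E(u')$ is $\HH^{n-1}$-null for $u\neq u'$. Indeed, a normal line meeting two distinct lifted axes lies in the hyperplane $\P^{n-1}_{Y\times\R}$ they span, and $\{x\colon n_x\subset Y\}$ is a transversal slice of $E$. Hence only countably many $u$ have $\HH^{n-1}(E(u))>0$. The lines $l$ and the parameters $\delta$ must then be chosen so that no $u_p$ falls in this countable set.
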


\begin{remark}
  For $n=2$, the result coincides with the planar result of \cite[Theorem 6.6]{ChCs2019}. For $\iota \in \Isom(\R^2)$, they call the set $\bigcup_{p \in P_{\iota}} p(E_p)$ a ``Besicovitch set for rotations''. 
  The result for the case $E = \S^{n-1}$ coincides with \cite[Theorem A.4]{ChDoKi2022NikodymSA}.
  All results for $n \ge 3$ and $E \neq \S^{n-1}$ are new.
\end{remark}

Let $V \subset \R^n$ be a 2-dimensional subspace. 
All planar isometries on $V$ can be naturally lifted to isometries in $\R^n$.
We call them \emph{simple isometries}, which form a subgroup of $\Isom(\R^n)$ denoted by $\Isom(V)$ (see \autoref{ssect:isom_R^n}).
Then $\Isom(V)$ contains both translations with direction in $V$ and rotations around $(n-2)$-dimensional affine subspaces orthogonal to $V$.
A simple but important observation is that any isometry in $\Isom(\R^n)$ can be decomposed as the composition of finitely many simple isometries.
With this observation, 
we can reduce \autoref{thm:global_Kak_isom} (the global version for $\Isom(\R^n)$) to a local version for $\Isom(V)$, where $V$ is an arbitrary 2-plane.
As a basic step of proving \autoref{thm:global_Kak_isom},
we consider the $\Isom(V)$-Kakeya movable property for $(n-1)$-rectifiable sets, and prove the following theorem.

\begin{theorem}\label{thm:local_Kak_isom}
  Let $V \subset \R^n$ be a $2$-dimensional subspace, and $E$ be an $(n-1)$-rectifiable set with $\HH^{n-1}(E)<\infty$.
  Then for any $\iota \in \Isom(V)$, there exist a continuous path $P_{\iota}  \subset \Isom(V)$ from $\id$ to $\iota$, 
  and a subset $E_p \subset E$ for all $p \in P_{\iota}$,
  such that
  \begin{align}\label{eq:proof_local_kak_isom.1}
    \bgg[v]{\bigcup_{p \in P_{\iota}} p(E_p)} = 0,
  \end{align}
  and
  \begin{align}\label{eq:proof_local_kak_isom.2}
    \HH^{n-1}(E_p) = \HH^{n-1}(E), \quad \forall p \in P_{\iota}.
  \end{align}
  In other word, $E$ is $\Isom(V)$-Kakeya movable.
\end{theorem}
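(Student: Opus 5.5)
Write $\R^n = V \oplus V^\perp$. Every element of $\Isom(V)$ acts as a planar orientation-preserving isometry on the $V$-coordinate and as the identity on $V^\perp$. The first reduction is to decompose a general $\iota \in \Isom(V)$ into finitely many pieces: a translation by a vector in $V$, and rotations about $(n-2)$-dimensional affine subspaces orthogonal to $V$, each through an angle at most $\delta$, with $\delta$ small. We concatenate the paths obtained for the pieces. Since a finite union of null sets is null, and the $E_p$ only need full measure inside $E$, concatenation is harmless. It is enough to treat one elementary move, applied to a set whose position is already known (namely $\iota'(E)$, which is again $(n-1)$-rectifiable with the same measure).

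The key step is to decompose $E$ according to its approximate tangent planes. Up to an $\HH^{n-1}$-null set, $E$ is covered by countably many $C^1$ graphs $\Gamma_j$. We sort points by the position of the normal $\nu(x)$ relative to $V$: let $\pi_V$ denote orthogonal projection onto $V$. Where $\pi_V\nu(x)=0$, the tangent plane contains $V$. For such points, fibres of $E$ in directions of $V$ are two-dimensional pieces, and $E$ is locally $A\times V$-like. Moving such a piece by elements of $\Isom(V)$ does not sweep out positive measure, by a Fubini/coarea argument: the image stays inside a set whose slices are contained in the (null) projected set of dimension $\le n-3$ in $V^\perp$, times $V$. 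In the projective picture, this is the part with normal class in $\P(V^\perp)$. For the remaining points, $\pi_V \nu(x)\neq 0$ defines a line direction $\ell(x)\in\P(V)$. Slicing $E$ by the planes $y+V$, $y\in V^\perp$, the coarea formula (Eiler/Federer slicing of rectifiable sets) shows that for $\HH^{n-2}$-a.e. $y$ the slice $E\cap(y+V)$ is a $1$-rectifiable set in the plane $y+V$ with finite $\HH^1$-measure. Moreover, its tangent line is $\ell(x)^\perp$-type, i.e. determined by the normal. We also get
\[
\HH^{n-1}\big(\{x\in E\setminus E_0\}\big)=\int_{V^\perp}\int_{E\cap(y+V)} J(x)^{-1}\,\dd\HH^1(x)\,\dd\HH^{n-2}(y).
\]

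We then apply the planar result of Chang--Csörnyei [Theorem 6.6] slice by slice. The crucial point is that the construction must be uniform: a single path $P_\iota$ and deletion rule must serve all slices simultaneously. The plan is to redo the Venetian-blind construction in $V$ with the "direction" parameter taken from $\ell(x)$ rather than from a fixed curve. Concretely, we partition the direction space $\P(V)$ into small arcs, and on $E$ we group points by the arc containing $\ell(x)$. For each group we build, as in the planar proof, a finite sequence of small translations and rotations in $V$ that moves each piece. Each piece is approximated by an $(n-1)$-dimensional piece nearly perpendicular to a fixed direction in $V$, and the motion sweeps a region of measure $\lesssim \epsilon\,\HH^{n-1}$. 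The planar Venetian-blind estimate, that a segment moved nearly along its own direction sweeps small area, lifts directly via Fubini in $V^\perp$. Points are deleted when they sit at the "junction" pieces of the blind. Iterating with $\epsilon_k\to 0$ and taking limits, as in Chang--Csörnyei, gives a path with covered set of measure $\le \sum$ small $=0$. Continuity of the limiting path follows from uniform convergence of the approximations.

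The main obstacle is the uniform measure estimate for moving an $(n-1)$-rectifiable piece by a short path in $\Isom(V)$. We need
\[
\HH^n\Big(\bigcup_{p\in P}p(E')\Big)\lesssim \operatorname{length}(P)\cdot\int_{E'}|\langle \nu, \text{velocity}\rangle|\,\dd\HH^{n-1}+o(\cdot),
\]
so that pieces whose normal is nearly orthogonal to the motion contribute little. This must hold with errors summable over the countable decomposition. To get it, we would first prove it for $C^1$ graphs via the area formula applied to the map $(t,x)\mapsto p_t(x)$, and then extend by approximation, discarding small-measure remainders. Those remainders are exactly the deleted $\HH^{n-1}$-null portions in the limit.
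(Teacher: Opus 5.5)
\textbf{The proposal has a genuine gap.} Your overall architecture matches the paper: a Venetian blind construction in $V$, iterated with $\epsilon_k\to 0$, then passing to a limit path. You also correctly see that one path must serve all slices $y+V$ at once. But the step meant to deliver that uniformity fails.

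\textbf{Grouping by normal direction is the wrong invariant.} Grouping points by the direction $\ell(x)\in\P(V)$ of $P_V\wh N(x)$ only captures what matters for translations. For a rotation about $z+V^\perp$ by angle $\theta$, the area formula bounds the swept measure by $|\theta|\int_E |P_V\wh N(x)|\,d(P_V(n_x),z+V^\perp)\,d\HH^{n-1}$. So the cost is governed by the whole projected normal line $P_V(n_x)$, which carries both a direction and an offset. Equivalently, it is governed by whether the projective normal line $\nu_x$ meets the projective axis $\aA(\cC)$. Points with the same $\ell(x)$ but different offsets need different centers, and since $\iota$ is a rotation you cannot reach it by nearly tangential translations.

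\textbf{Separate motions per group are not allowed.} ``For each group we build a finite sequence of motions'' violates the setup. All of $E$, minus an $\HH^{n-1}$-null set at each time $p$, must follow one path.

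\textbf{What the paper does instead.} Fix a projective line $l\subset\P^2_{V\times\R}$ through $\cC_\iota$. Since $\aA(l)$ is a projective hyperplane of $\P^n$, every $\nu_x$ meets it, which assigns each $x$ a ``good center'' on $l$. For the final families not stopped by length, the Venetian blind makes the direction interval $J_\ii$ cover $l$ except an arc $B_l(u_\ii,\epsilon)$. Along $\LL_\ii$ you move only the points whose good center lies in $J_\ii$. Bookkeeping over elementary intervals $J$ (each $x$ lies in at most two $E_J$), combined with \autoref{lem:smallmeasisom}, gives the $\epsilon$-bound.

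\textbf{Equimeasurability for every $p$ is not justified.} You need $\HH^{n-1}(E_p)=\HH^{n-1}(E)$ for \emph{every} $p$. At each finite stage the deleted set along a segment (points whose good center lies within $\epsilon_k$ of $u_k$) has positive measure. So ``the remainders are exactly the deleted null portions in the limit'' is an assertion, not an argument.

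In the limit one only gets $E\setminus E_p\subset E(u_p)=\{x:\nu_x\cap\aA(u_p)\neq\emptyset\}$ for a single limit center $u_p$. This set can have positive measure, for example a piece of a cylinder whose normal lines all meet one axis $z+V^\perp$. The missing idea has two parts:
\begin{itemize}
\item The set of such atoms $u$ is countable. For $u\neq u'$, a point of $E(u)\cap E(u')$ has either $N(x)\in\P^{n-3}_{V^\perp}$ or $n_x$ contained in a fixed hyperplane $Y$ determined by $u,u'$, so $E(u)\cap E(u')$ is $\HH^{n-1}$-null.
\item The freedom in choosing the lines $l$ and the points $u_k\notin\overline{B}_l(\cC_\iota,\epsilon)$ lets you keep every $u_p$ off the atoms.
\end{itemize}

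\textbf{A smaller flaw.} You dispose of the set where $V\subset T_xE$ by claiming its projection to $V^\perp$ is null. That is a Sard-type statement which fails for $C^1$ hypersurfaces: Whitney's $C^1$ function, non-constant on an arc of critical points, gives a graph in $\R^3$ whose critical heights fill an interval. The correct argument is that the sweep map has zero Jacobian on that set, so the area formula makes its image null.
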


\begin{remark}
Recall that an $(n-1)$-rectifiable set $E\subset \R^n$ can be decomposed as $E = E_0 \cup \wt{E}$, 
where $\HH^{n-1}(E_0)=0$ and $\wt{E}$ is a union of countably many $C^1$ hypersurfaces, called the regular part of $E$.
Then the normal direction and normal line to $E$ are continuous on $\wt{E}$, and are well-defined $\HH^{n-1}$-almost everywhere on $E$.
Using these geometric concepts, we can determine the moved set $E_p$ for all $p \in P_{\iota}$.
Roughly speaking, $E_p$ is obtained from $E$ by deleting some points satisfying special geometric properties:
  \begin{enumerate}[label=(\alph*)]
    \item If $p \in P_{\iota}$ is a translation with direction $\theta_p $, 
          then $E\setminus E_p$ contains points whose normal directions to $E$ are orthogonal to $\theta_p$.
    \item If $p \in P_{\iota}$ is a rotation,
          then $E\setminus E_p$ contains points whose normal lines to $E$ intersect the axis of the rotation $p \in \Isom(V)$.
  \end{enumerate}
  For more details, see the proof in \autoref{ssect:proof_loc_Kak_isom}.
\end{remark}

For $n \ge 3$, we can also consider the group of all orientation-preserving affine transformations, denoted by $\Aff(\R^n)$.
This group contains isometries, anisotropic dilations and their compositions.
We can generalize \autoref{thm:global_Kak_isom} to the setting of $\Aff(\R^n)$, which gives the following theorem (see \autoref{sect:Aff}).

\begin{theorem}\label{thm:global_Kak_aff}
  Suppose $n \ge 3$, $E$ is an $(n-1)$-rectifiable set in $\R^n$ with $\HH^{n-1}(E)<\infty$.
  Then for any affine transformation $\sigma \in \Aff(\R^n)$, there exist a continuous path $P_{\sigma} \subset \Aff(\R^n)$ connecting $\id$ and $\sigma$, 
  and a collection $\{E_p\}_{p \in P_{\sigma}}$ of subsets of $E$ satisfying $\HH^{n-1}(E_p) = \HH^{n-1}(E)$ for all $p \in P_{\sigma}$, 
  such that 
  \begin{align*}
    \bgg[v]{\bigcup_{p \in P_{\sigma}} p(E_p)} = 0.
  \end{align*}
  In other word, $E$ is $\Aff(\R^n)$-Kakeya movable.
\end{theorem}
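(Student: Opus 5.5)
The plan is to reduce \autoref{thm:global_Kak_aff} to \autoref{thm:global_Kak_isom} plus one new ingredient: a Kakeya-type motion along one-parameter groups of \emph{simple dilations}. We first factor $\sigma$. Write $\sigma(x) = Ax + b$ with $\det A > 0$, and take the polar decomposition $A = RS$, where $R \in \SO(n)$ and $S$ is symmetric positive definite. Diagonalizing, $S = Q D Q^{\Tt}$ with $Q \in \SO(n)$ and $D = \mathrm{diag}(\lambda_1,\dots,\lambda_n)$, all $\lambda_i>0$. Then $\sigma$ is the composition of isometries with the diagonal map $D$, and $D$ itself factors as a product of anisotropic dilations $x \mapsto x + (\lambda-1)\inner{x,e_i}e_i$, each of which stretches a single coordinate direction.

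Paths concatenate. If $E$ can be moved from $\id$ to $\tau_1$ along $P_1$ with a null covered set, and $\tau_1(E)$ can be moved to $\tau_2\tau_1(E)$ along $P_2$, then the concatenated path works. The sets $E_p$ for the second leg are $\tau_1^{-1}$ of the sets chosen for $\tau_1(E)$. Since $\tau_1$ is affine and bi-Lipschitz, $\tau_1(E)$ is again $(n-1)$-rectifiable with finite measure. Equality $\HH^{n-1}(E_p)=\HH^{n-1}(E)$ is preserved, because affine maps send $\HH^{n-1}$-null sets to $\HH^{n-1}$-null sets and $\HH^{n-1}(E\setminus E_p)=0$. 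Hence it suffices to treat each factor separately. The isometric factors are handled by \autoref{thm:global_Kak_isom}, so what remains is a single stretch $\delta_\lambda(x) = x + (\lambda-1)x_1 e_1$.

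For the stretch we use the path $t \mapsto \delta_{t}$, $t$ between $1$ and $\lambda$; the straight path is not Kakeya-small in general. The idea is to exploit $n\ge 3$. Since $\delta_t$ fixes the hyperplane $\{x_1 = 0\}$ and acts as an affine map only in $x_1$, we realize it approximately as a zigzag of simple isometries, in the Venetian-blind spirit of the construction behind \autoref{thm:local_Kak_isom}. The available ingredients are:
\begin{enumerate}[label=(\roman*)]
  \item rotations in a $2$-plane $V=\spn\{e_1,e_2\}$;
  \item translations along $e_1$;
  \item the simple isometries on a second plane $\spn\{e_1,e_3\}$, which exist precisely because $n\ge 3$.
\end{enumerate}
Concretely, we decompose $E$ into finitely many pieces whose normals lie in small cones. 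On each piece, the first-order effect of the infinitesimal stretch $\partial_t \delta_t = x_1 e_1$ is a normal velocity $x_1\inner{e_1,\nu}$. Locally this agrees, up to first order, with the normal velocity of a suitable rotation in the plane $\spn\{e_1,\nu^\perp\text{-component}\}$. We then apply the local rotational Kakeya motions from \autoref{thm:local_Kak_isom} piecewise, and delete the $\HH^{n-1}$-null sets where normals are tangent to the motion.

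The main obstacle is this last step: showing that the union of the moved pieces under a genuinely non-isometric deformation has $\HH^n$-measure zero. I would try the Chang--Csörnyei strategy. For each $\varepsilon$, build a finite-step path whose covered set has measure $<\varepsilon$, using the measure estimates for moving rectifiable sets (the covered volume is controlled by $\int |\text{normal velocity}|\,\dd\HH^{n-1}$ over the retained part). Then take a limit over a sequence $\varepsilon_k\to0$, using a nested Baire-category or limiting path construction as in the isometric case, so that the union over the limiting path is null. The key estimate to establish is that the deleted set has normal velocity zero, and that the remaining part's velocity can be made arbitrarily small on all but a set of small measure by inserting rotations.
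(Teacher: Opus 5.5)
There is a genuine gap, and it is exactly at the step you yourself flag as the main obstacle. Your reduction is fine. Factoring $\sigma$ into isometries and diagonal stretches is correct, and so is concatenating paths by applying the result to the intermediate set $\tau_1(E)$, which is still rectifiable and whose null sets pull back to null sets. \autoref{thm:global_Kak_isom} then disposes of the isometric factors.

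Your mechanism for the stretch $\delta_\lambda$, however, cannot work. A zigzag of simple isometries stays inside the group $\Isom(\R^n)$, so it never reaches $\delta_\lambda$. Matching the normal velocity $x_1\inner{e_1,\wh{N}(x)}$ by a different rotation on each small piece of $E$ means moving different pieces by different maps. The theorem instead requires, at each $p\in P_\sigma$, a single affine map $p$ applied to $E_p$. Moreover, pieces moved by isometries end at isometric copies of themselves, not at $\delta_\lambda(E)$. So the path must actually pass through non-isometric maps, and you need a Kakeya-type estimate for moving along them; the proposal supplies none.

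The choice of non-isometric family is also delicate. Consider the group $\{x\mapsto x+(s x_1+b)e_1\}$ generated by your stretches and the $e_1$-translations. Along any path in it from $\id$ to $\delta_\lambda$, the flat piece $E=\{1\}\times[0,1]^{n-1}$ is carried through every slice $\{c\}\times[0,1]^{n-1}$ with $c$ between $1$ and $\lambda$. By Fubini, the covered set then has positive measure for any choice of full-measure subsets $E_p$. The underlying reason is that this two-parameter family has only a projective line of centers. So, unlike in the proof of \autoref{thm:local_Kak_isom}, there is no freedom in choosing the lines $l$ to avoid the countably many atoms $u$ with $\HH^{n-1}(E(u))>0$, which is what equimeasurability requires.

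The missing idea is to factor the diagonal part into isotropic dilations acting in $2$-planes and fixing the orthogonal complement, rather than into single-axis stretches. For example, for $n=3$,
\[
\mathrm{diag}(\lambda,1,1)=\mathrm{diag}(\sqrt{\lambda},\sqrt{\lambda},1)\,\mathrm{diag}(1,\lambda^{-1/2},\lambda^{-1/2})\,\mathrm{diag}(\sqrt{\lambda},1,\sqrt{\lambda}).
\]
This is where $n\ge 3$ is used: in $\R^2$ such dilations only generate similarities. It is not the existence of a second rotation plane that matters.

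Each factor lies in $\Sim_{\alpha}(V)$ with $\alpha=\pi/2$, i.e. homotheties of $V$ about axes $z+V^\perp$ together with translations in $V$. This three-parameter family has exactly the structure of $\Isom(V)$:
\begin{itemize}
\item projective axis $[z+V^\perp:1]$;
\item projective centers filling all of $\P^2_{V\times\R}$;
\item the parametrization $\wt{\eta}(\sigma)=(\phi z,\phi)$, under which composition of small elements is approximately vector addition.
\end{itemize}
Redoing the Jacobian computation of \autoref{lem:bettermeas_isom} gives \autoref{lem:measaff} and \autoref{lem:smallmeasaff}; for general $\alpha$ one first straightens the spirals with the measure-preserving map $H^{\alpha}_{z,V}$. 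The swept volume is bounded by
\[
\lesssim_{\alpha,r}\abs{\phi}\int_E \abs{P_V(\wh{N}(x))}\, d_{\R^n}(\RR^{\alpha}_{x,V}(n_x),z+V^\perp)\dd\HH^{n-1}(x),
\]
which is small for points whose rotated normal line passes near the axis. With these two estimates, the Venetian-blind zigzag and the limiting argument of \autoref{thm:local_Kak_isom} carry over to $\Sim_{\alpha}(V)$. Your concatenation step then finishes the proof.
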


\subsection{Nikodym type set}\label{ssect:intro-Nikodymtype}

The classical Nikodym set $F$ in $\R^n$ is an $\HH^n$-null set such that 
for every $x \in \R^n$, there exists a line $l$ through $x$ with $l \cap F$ containing a unit segment.
It is well known that the classical Nikodym set (or classical Kakeya set) plays an important role in modern harmonic analysis and geometric measure theory.
The classical Nikodym set in $\R^2$ can be obtained from the classical Kakeya set via the point-line duality method.
Similar to the Kakeya set, the planar Nikodym set $F \subset\R^2$ can be used to construct the Nikodym set in higher-dimensional space, 
by simply taking $F\times \R^{n-2}$ as the classical Nikodym set in $\R^n$.
More details about recent studies on the classical Nikodym set can be found in \cite{Mat2015} and the references listed therein.

The classical Nikodym set involves packing lines into an $\HH^n$-null set, 
in the sense that through every point there is a line whose intersection with the set contains a segment of unit length.
The problem of packing other types of sets into an $\HH^n$-null set while imposing a suitable covering condition has also attracted considerable interest.
In \cite{Fal1986}, Falconer considered the problem of packing $k$-dimensional affine subspaces (or $k$-planes) into an $\HH^n$-null set.
For a $k$-dimensional linear subspace $W \in \GG^k(\R^n)$ with $1 \le k < n$, Falconer constructed a set $F\subset \R^n$ with $\HH^n(F)=0$ 
such that for every $x \in \R^n$, there exists a k-plane $W_x$ passing through $x$ with $W_x \setminus \{x\} \subset F$ (see \cite{Fal1986} by Falconer, or \cite[Theorem 11.7]{Mat2015} by Mattila).
Falconer's set can be regarded as a Nikodym type set associated with $k$-planes.

Notice that the $k$-plane $W_x$ used in Falconer's construction is essentially an isometric copy of the given subspace $W \in \GG^k(\R^n)$.
It is therefore natural to consider Nikodym type sets associated with other sets under isometries.
We say that $F \subset \R^n$ is a \emph{Nikodym type set associated with a $k$-dimensional set $E \subset \R^n$} ($k \in [1,n)$)
if $\HH^n(F)=0$ and, for every $y \in \R^n$, there exist an orientation-preserving isometry $\iota_y$ and a subset $E_y \subset E$ such that $\iota_y(E)$ passes through $y$, $\HH^k(E_y) = \HH^k(E)$, and $\iota_y(E_y) \subset F$.
This definition is a measure-theoretic analog of the Nikodym type set associated with the $k$-plane considered by Falconer in \cite{Fal1986}.
Chang and Csörnyei, in \cite[Theorem 6.9]{ChCs2019}, constructed Nikodym type sets associated with rectifiable curves in $\R^2$.
In \cite[Theorem 1.1]{ChDoKi2022NikodymSA}, Chang, Dosidis and Kim constructed the Nikodym type set associated with the unit sphere $\S^{n-1}$ in $\R^n$.

In this article, we consider the Nikodym type sets associated with $(n-1)$-rectifiable sets in $\R^n$.
The classical Nikodym set for lines in $\mathbb{R}^n$, which is a linear-type construction without curvature information, 
can be obtained from the planar case ($n=2$) by taking the Cartesian product.
But for a curved hypersurface in $\R^n$, we cannot construct the Nikodym type set directly from lower-dimensional results.
However, combining the isometric Kakeya movable results established in \autoref{thm:local_Kak_isom} with a natural property of the embedded $C^1$ hypersurfaces
(namely, that many isometries acting on such a hypersurface will cover a set with nonempty interior; see \autoref{lem:rot-create-interior}), 
we can construct the Nikodym type set associated with an $(n-1)$-rectifiable set in $\R^n$, as stated in the following theorem.

\begin{theorem}\label{thm:isom_Nikodym}
  Suppose $E$ is an $(n-1)$-rectifiable set in $\R^n$ with $\HH^{n-1}(E)<\infty$.
  Then there exists a set $F \subset \R^n$ satisfying:
  \begin{enumerate}
    \item $F$ has Lebesgue measure zero;
    \item For any $y \in \R^n$, there exist an orientation-preserving isometry $\iota_y$ and a set $E_y \subset E$ such that 
          \begin{enumerate}[label=(\alph*)]
            \item $\iota_y(E)$ passes through $y$;
            \item $\HH^{n-1}(E_y) = \HH^{n-1}(E)$ and $\iota_y(E_y) \subset F$.
          \end{enumerate}
  \end{enumerate}
\end{theorem}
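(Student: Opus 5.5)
The strategy is to combine \autoref{thm:local_Kak_isom} with the interior-creating property of $C^1$ hypersurfaces (\autoref{lem:rot-create-interior}), and then cover $\R^n$ by countably many translates. We may assume $\HH^{n-1}(E)>0$. If $\HH^{n-1}(E)=0$, pick any point $e\in E$ (we may assume $E\neq\emptyset$), set $E_y=\{e\}$ and $\iota_y$ the translation by $y-e$, and take $F=\emptyset$ after deleting the point. Note that the condition only requires $E_y$ to have full measure; it need not be nonempty. So $F=\emptyset$ works with $E_y=\emptyset$, and $\iota_y$ is any translation moving some point of $E$ to $y$.

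In the main case, first fix a $C^1$ piece $S\subset \wt E$ with $\HH^{n-1}(S)>0$ and apply \autoref{lem:rot-create-interior}. This gives a continuous family of isometries $\{p\}_{p\in P}$ such that $\bigcup_{p\in P} p(S)$ contains a nonempty open ball $B$. In fact, we can choose these isometries along a path in some $\Isom(V)$, for example a rotation about an $(n-2)$-plane composed with a translation in $V$; this will be the content of the lemma. The point is that moving $S$ along $P$ sweeps out $B$, so for each $y\in B$ there is $p_y\in P$ with $y\in p_y(S)\subset p_y(E)$. Next apply \autoref{thm:local_Kak_isom} along the same path. Concretely, I would concatenate finitely many paths from $\id$ to $\iota$ that realize the sweep, each lying in a simple isometry group $\Isom(V_j)$. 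This produces subsets $E_p\subset E$ of full measure with $\HH^n\bigl(\bigcup_p p(E_p)\bigr)=0$.

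The key subtlety, which I expect to be the main obstacle, is that the construction in \autoref{thm:local_Kak_isom} may use a different path (Venetian-blind zigzags) rather than the prescribed sweeping path. What we need is that the path produced still sweeps a set with interior. I would try to arrange this by noting that the Kakeya path $P_\iota$ is continuous and connects $\id$ to $\iota$, and that the set $\bigcup_{p\in P_\iota} p(S)$ is a continuous image of the connected set $P_\iota\times S$. The argument then goes through \autoref{lem:rot-create-interior}, stated for any such connecting path between $\id$ and an isometry $\iota$ that moves $S$ transversally, for instance a large rotation. It shows that this union contains a ball $B$. Note the requirement is only that $\iota_y(E)$ passes through $y$, not $\iota_y(E_y)$, so hitting $y$ by the full copy $p_y(S)$ suffices. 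This is why deleting null sets does not harm condition (a).

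Finally, set $F_0=\bigcup_{p\in P_\iota}p(E_p)$, which is $\HH^n$-null and whose sweep covers the ball $B=B(x_0,r)$. Choose countably many translations $\tau_k$ with $\bigcup_k \tau_k(B)=\R^n$, and put $F=\bigcup_k \tau_k(F_0)$, which is still null. For $y\in\R^n$, pick $k$ with $\tau_k^{-1}(y)\in B$ and $p\in P_\iota$ with $\tau_k^{-1}(y)\in p(E)$. Then set $\iota_y=\tau_k\circ p$, which is orientation-preserving, and $E_y=E_p$. This gives $y\in\iota_y(E)$, $\HH^{n-1}(E_y)=\HH^{n-1}(E)$, and $\iota_y(E_y)\subset\tau_k(F_0)\subset F$.
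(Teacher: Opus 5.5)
Your proposal is essentially the paper's proof: take the path $P_\iota$ from \autoref{thm:local_Kak_isom} in a plane $V$ containing a normal direction, use \autoref{lem:rot-create-interior} to get a ball inside the set swept by the regular part, and let $F$ be a countable union of translates of $\bigcup_{p} p(E_p)$, with $\iota_y=\tau\circ p_y$ and $E_y=E_{p_y}$. Two small remarks. The connectedness of $P_\iota\times S$ contributes nothing, since the interior comes solely from the lemma applied to the very path carrying the $E_p$. Your treatment of $\HH^{n-1}(E)=0$ via $E_y=\emptyset$, $F=\emptyset$ (once you discard your first, inconsistent attempt with $E_y=\{e\}$) covers a case that the paper's choice $\Gamma=\wt{E}$ silently skips.
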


\begin{remark}
  The case $n=2$ has been considered in \cite[Theorem 6.10]{ChCs2019}. 
  The result for the case $E = \S^{n-1}$ coincides with \cite[Theorem 1.1]{ChDoKi2022NikodymSA}.
  All results for $n \ge 3$ and $E \neq \S^{n-1}$ are new.
\end{remark}

\begin{remark}\label{rmk:specialcase}
  Condition (a) states that, for every point $y\in\R^n$, the isometric image $\iota_y(E)$ passes through $y$; 
  this is the central covering property of the Nikodym type set $F$ constructed above.
  In fact, the same theorem remains valid if we replace $E$ in condition (a) by any countable disjoint union of embedded $C^1$ hypersurfaces.
\end{remark}

\begin{remark}
A classical result shows that no $\HH^n$-null set in $\R^n$ can contain, for every center $x \in \R^n$, a positive measure subset of the sphere $\S^{n-1}(x,1)$ 
(for $n \ge 3$, see Stein \cite{St1976}; for $n = 2$, see Bourgain \cite{Bou1986} and Marstrand \cite{Marstrand1987}).

These non-existence results for $\HH^n$-null sets concern placing a copy of $E$ around every point in $\R^n$. 
When $n = 2$, Chang and Csörnyei \cite{ChCs2019} instead placed an equimeasurable isometric copy of $E$ through every point of $\R^2$ and constructed related Lebesgue null sets.
Our theorem shows that this phenomenon also holds in higher-dimensional spaces.
More studies on the possibility (and impossibility) of packing sets into an $\HH^n$-null set can be found, for example, in \cite{Wol1997, Mit1999, HZ2025, YZ2025CurvedKakeyaset, ILT2026}.
\end{remark}

For $n \ge 3$, we also consider the Nikodym type set associated with $(n-1)$-rectifiable sets in the setting of $\Aff(\R^n)$, 
which generalizes \autoref{thm:isom_Nikodym} to the following theorem.

\begin{theorem}\label{thm:aff_Nikodym}
  Suppose $E$ is an $(n-1)$-rectifiable set in $\R^n$ with $\HH^{n-1}(E)<\infty$, 
  and $\Gamma \subset \R^n$ is a countable disjoint union of embedded $C^1$ hypersurfaces.
  Then there exists a set $F \subset \R^n$ satisfying:
  \begin{enumerate}
    \item $F$ has Lebesgue measure zero;
    \item For any $y \in \R^n$, there exist an orientation-preserving affine transformation $\sigma_y$ and a set $E_y \subset E$ such that 
          \begin{enumerate}[label=(\alph*)]
            \item $\sigma_y(\Gamma)$ passes through $y$;
            \item $\HH^{n-1}(E_y) = \HH^{n-1}(E)$ and $\sigma_y(E_y) \subset F$.
          \end{enumerate}
  \end{enumerate}
\end{theorem}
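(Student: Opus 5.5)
The plan is to imitate the proof of \autoref{thm:isom_Nikodym}, with the Kakeya movability under isometries replaced by \autoref{thm:global_Kak_aff}. The argument rests on an interior-creation property for $\Gamma$ under affine maps.

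First I establish an affine analog of \autoref{lem:rot-create-interior}. Pick one embedded $C^1$ hypersurface $\Gamma_0 \subset \Gamma$, a point $x_0 \in \Gamma_0$, and a unit normal $\nu$ to $\Gamma_0$ at $x_0$. Consider the one-parameter family of translations $\tau_t(x) = x + t\nu$ for $t \in (-\delta,\delta)$. By the implicit function theorem, $\bigcup_{|t|<\delta} \tau_t(\Gamma_0)$ contains an open ball $B$ around $x_0$. Thus there is a continuous path $Q=\{\tau_t\}$ in $\Aff(\R^n)$ such that every $y \in B$ lies on $q(\Gamma)$ for some $q \in Q$.

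Next I cover all of $\R^n$. Choose countably many affine maps $\sigma_j \in \Aff(\R^n)$, for instance translations and dilations, with $\bigcup_j \sigma_j(B) = \R^n$. For each $j$, the maps $\sigma_j \circ \tau_t$ have images of $\Gamma$ covering $\sigma_j(B)$. The remaining task is to find, for each $j$, a null set $F_j$ containing, for every $t$, a full-measure copy $\sigma_j\tau_t(E_{j,t})$ with $E_{j,t}\subset E$. Then $F = \bigcup_j F_j$ is null, and for $y \in \sigma_j(B)$ I take $\sigma_y = \sigma_j\tau_t$ with $y \in \sigma_y(\Gamma)$.

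The main step is to construct $F_j$. The path $t \mapsto \sigma_j\tau_t$, $t\in[-\delta,\delta]$, lies in $\Aff(\R^n)$. Moving along it means first applying $\sigma_j\tau_{-\delta}$ and then the translations $\sigma_j\tau_{-\delta}\,\tau_{s}\,(\sigma_j\tau_{-\delta})^{-1}$. Since $\tau_s$ is a translation, this conjugate is again a translation, in a fixed direction $\nu' = \aff(\sigma_j)\nu$. So I apply \autoref{thm:local_Kak_isom}, or more precisely its proof for the translation case, with $V$ any 2-plane containing $\nu'$, to the rectifiable set $E' = \sigma_j\tau_{-\delta}(E)$, which is again $(n-1)$-rectifiable with finite $\HH^{n-1}$ measure. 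This is the obstacle. The theorem gives some path in $\Isom(V)$ from $\id$ to the translation by $2\delta\nu'$, not necessarily the straight translation path, so points on intermediate positions might be missed.

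To handle this I would instead use the construction inside the proof of \autoref{thm:local_Kak_isom}, namely the Venetian-blind structure. Translations are applied in directions near $\nu'$, and by item (a) of the remark one deletes the points whose normals are orthogonal to the current direction. Those constructions produce, for each intermediate $t$, a set $p_t(E_{p_t})$ where $p_t$ is a translation by $t\nu'$ plus small perpendicular corrections. Alternatively I replace $\tau_t$ by exactly the path $P$ produced by the theorem: any continuous path in $\Aff(\R^n)$ from $\id$ to a translation by $2\delta\nu$ sweeps $\Gamma_0$ over a set containing a ball, by a degree or continuity argument near $x_0$, provided the path stays within a small neighborhood of the straight translations. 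I expect this second route to be the cleanest: the path from \autoref{thm:local_Kak_isom} can be chosen uniformly close to the straight path (by subdividing $[-\delta,\delta]$ finely and applying the theorem on each piece). Then an intermediate value argument along the normal line through each point $y$ near $x_0$ shows that $y \in p(\Gamma_0)$ for some $p$ in the path. Taking images under $\sigma_j$ with the null sets from \autoref{thm:global_Kak_aff}/\autoref{thm:local_Kak_isom} and mapping back via the invertible affine maps preserves nullity and full $\HH^{n-1}$-measure of the moved subsets, completing the proof.
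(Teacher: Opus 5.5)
Your argument is correct in outline, but it takes a different route from the paper. For this theorem the paper only says to rerun Sections~\ref{sect:Kak_isom} and~\ref{sect:Nikodym} with the spiral maps $\sigma^{\phi,\alpha}_{z,V}$ and Lemmas~\ref{lem:measaff}--\ref{lem:smallmeasaff}. That means an affine analogue of Lemma~\ref{lem:rot-create-interior}, followed by countably many translated copies.

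You bypass the affine machinery entirely:
\begin{itemize}
\item You get the open set locally, by pushing $\Gamma_0$ along its normal.
\item You rightly reject the straight path $\{\tau_{t\nu}\}$. It genuinely fails: by Fubini in the coordinates $(s,t)\mapsto x(s)+t\nu$, no null set contains full-measure copies $E_t+t\nu$ of a piece of $E$ transversal to $\nu$ for every $t$.
\item You replace it by an isometric Kakeya path kept uniformly close to it, so an intermediate-value argument in the $\nu$-direction still sweeps a ball.
\end{itemize}
If you take the $\sigma_j$ to be rational translations, every $\sigma_y$ you produce is an isometry. So you are really proving the $\Gamma$-version of Theorem~\ref{thm:isom_Nikodym} (Remark~\ref{rmk:specialcase}) and then using $\Isom(\R^n)\subset\Aff(\R^n)$; in particular $n\ge 3$ plays no role.

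What this buys is an honest open ball, which is exactly what the covering step needs. Lemma~\ref{lem:rot-create-interior} instead passes from positive Lebesgue measure to nonempty interior, a step that needs more justification than is written there. The paper's route, on the other hand, needs no quantitative control on where the path goes.

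That control is the one point you must supply, because it is not in the statement of Theorem~\ref{thm:local_Kak_isom}. Subdividing $[-\delta,\delta]$ pins only the endpoints of the pieces, and the paper's proof treats a translation as a composition of two simple rotations. One way to fill the gap:
\begin{itemize}
\item Write each piece $\tau_{h\nu}$ as $\RR^{\theta}_{a,V}\circ\RR^{-\theta}_{b,V}=\tau_{(I-\RR^{\theta}_{0,V})(a-b)}$, with $\theta\approx h$ and $a,b\in V$ in a fixed bounded set. This is possible because $I-\RR^{\theta}_{0,V}$ is invertible on $V$ with norm $2\abs{\sin(\theta/2)}$.
\item Choose the zigzag fineness parameters so the Kakeya path stays within $O(h)$ of these natural paths.
\item Then every intermediate map has rotation angle $O(h)$ and is within $O(h)$ of some $\tau_{t\nu}$ near $x_0$. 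Hence $p(\Gamma_0)$ is uniformly a graph over $T_{x_0}\Gamma_0$.
\item The IVT then applies to all $y$ in a ball whose radius depends only on $\Gamma_0$ and $\delta$. Fix the tolerance first, then build the path for $E$.
\end{itemize}
Your first route (Venetian blind with translations) is unnecessary and can be dropped.
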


\begin{remark}
  In \cite[Section 7]{ChCs2019}, the authors considered planar Nikodym type sets related to similarity transformations of $\R^2$ (i.e., isotropic dilations and isometries).
  We generalize their results to affine transformations in higher dimensions.
\end{remark}

For $n \ge 3$, Hickman and Zahl \cite{HZ2025} considered the $L^p$ boundedness of strong spherical maximal operators in $\R^n$. 
Their result implies that no $\HH^n$-null set in $\R^n$ can contain, for every center $x \in \R^n$, a positive measure subset of some ellipsoid centered at $x$ whose axes are parallel to the coordinate axes.
By taking $E = \mathbb{S}^{n-1}$ in \autoref{thm:aff_Nikodym}, our theorem implies the existence of an $\HH^n$-null set in $\R^n$ ($n \ge 3$) with the following property:
for every $x \in \R^n$, the set contains $\HH^{n-1}$-almost all of some ellipsoid passing through $x$, 
where the axes are not necessarily parallel to the coordinate axes.

\subsection{Proof sketches of main theorems and paper organization}

We first give some preliminaries in \autoref{sect:preliminaries} related to the projective space, $(n-1)$-rectifiable sets, and the decomposition and geometric description of isometries.
In \autoref{sect:lem} we give the basic lemmas on moving sets along isometries, which play a very important role in the proofs of \autoref{thm:global_Kak_isom} and \autoref{thm:isom_Nikodym}.
We describe the construction of zigzag paths of isometries in \autoref{sect:VBandzigzag}.

\autoref{sect:Kak_isom} is devoted to the $\Isom(\R^n)$-Kakeya movable property for $(n-1)$-rectifiable sets, 
which contains the proof of \autoref{thm:global_Kak_isom} and \autoref{thm:local_Kak_isom}.
We reduce \autoref{thm:global_Kak_isom} to \autoref{thm:local_Kak_isom} using the fact that $\Isom(\R^n)$ can be generated by $\Isom(V)$ for suitable 2-planes $V$.
The proof of \autoref{thm:local_Kak_isom} depends on \autoref{lem:measisom}, \autoref{lem:smallmeasisom}, \autoref{lem:smallneighborhood} and the Venetian blind-type constructions described in \autoref{sect:VBandzigzag}.
For any simple isometry $\iota$, we introduce the projective axis to describe its geometric properties (see \autoref{def:proaxis_isom}).
In the projective setting, we can describe our basic observation on moving a bounded $C^1$ hypersurface $E$ by simple isometries in a uniform way.
For a given simple isometry $\iota \in \Isom(\R^n)$, let $E^{\iota,\delta}$ be the set of points in $E$ 
whose projective normal line intersects the $\delta$-neighborhood of the projective axis of $\iota$ (under the metric of $\P^n$).
Then moving $E^{\iota,\delta}$ along $\iota$ covers a set of small Lebesgue measure controlled by $\delta$.
This simple but powerful observation leads to the key measure estimate given in \autoref{lem:smallmeasisom}.

We study Nikodym type set in \autoref{sect:Nikodym} and prove \autoref{thm:isom_Nikodym} there.
In \autoref{sect:Aff}, we generalize the main theorems from $\Isom(\R^n)$ to $\Aff(\R^n)$ for $n\ge 3$, 
and describe the main tools used to prove \autoref{thm:global_Kak_aff} and \autoref{thm:aff_Nikodym}.

\subsection{Notations}

Let $X=\R^n$ or $X=\P^n$. We regard $(X,d_X)$ as a metric space equipped with the topology induced by its metric. 
For $E\subset X$, $x\in X$, and $y\in E$, we use the following notation:
\begin{itemize}[label=\mydot]
  \item $\mathscr{P}(X)$ and $\mathscr{F}(X)$ denote the collections of all subsets and all closed subsets of $X$, respectively;
  \item $B_X(x,r)$ and $\widebar{B}_X(x,r)$ denote the open and closed balls centered at $x\in X$ of radius $r>0$;
  \item $B_X(E,r) := \bigcup_{x\in E} B_X(x,r)$ is the (open) $r$-neighborhood of a set $E\subset X$;
  \item $B_E(y,r) := B_X(y,r)\cap E$ and $\widebar{B}_E(y,r) := \widebar{B}_X(y,r)\cap E$ are the open and closed balls in $E$;
  \item for an integer $k\in[0,\dim X]$, $\GG^k(X)$ and $\AA^k(X)$ denote the collections of $k$-dimensional linear subspaces and affine subspaces of $X$, respectively;
  \item for $A,B\subset X$, $\spn_X\{A,B\}$ denotes the smallest subspace of $X$ containing $A\cup B$;
  \item for a subspace $W\subset \R^n$, $P_W:\R^n\to W$ denotes the orthogonal projection onto $W$.
\end{itemize}
When $X=\R^n$, we omit the subscript and simply write $B(x,r)$, $B(E,r)$, and $\spn\{A,B\}$.

For $E\subset\R^n$ and $s\in[0,n]$, we denote by $\HH^s(E)$ the $s$-dimensional Hausdorff measure of $E$, 
and by $\dim(E)$ its Hausdorff dimension. 
The Lebesgue measure $\HH^n(E)$ is denoted simply by $\abs{E}$.

We follow the standard convention that for a map $f:X\to Y$, the image of $A\subset X$ is $f(A):=\{f(x):x\in A\}$, 
and the preimage of $B\subset Y$ is $f^{-1}(B):=\{x\in X:f(x)\in B\}$.

We write $a \lesssim_B b$ if $a \le C b$ for a constant $C>0$ depending only on $B$, 
and $a \approx_B b$ if both $a\lesssim_B b$ and $b\lesssim_B a$ hold. 
Also, we use $M\gg 1$ to indicate that $M$ is a sufficiently large number, 
and write $o_M(1)$ for a quantity that tends to $0$ as $M\to \infty$.

\section{Preliminaries}\label{sect:preliminaries}

In this section, we introduce some definitions that will be used in the proofs later.
In order to treat rotations and translations in a unified manner, we shall work in the projective space $\P^n$ rather than in $\R^n$.

\subsection{Embedding $\R^n$ into $\P^n$}\label{ssect:embedinP^n}

To study the geometric properties of rotations and translations in $\R^n$, 
it is more convenient to work in the projective space $\P^n$.

One way to visualize $\P^n$ is to regard it as $\R^{n+1} \setminus \{0\}$ modulo the scalar multiplication, i.e., 
the quotient space $\pi(\R^{n+1} \setminus \{0\})$, where 
\begin{align*}
\pi \colon 
&\R^{n+1} \setminus \{0\} \rightarrow \P^n, \\ 
&x=(x_1,\ \cdots,\ x_n,\ x_{n+1}) \mapsto \pi(x) =[x] \coloneq [x_1: \cdots: x_n : x_{n+1}]
\end{align*}
is the natural quotient map, and the homogeneous coordinate satisfying $[\lambda x] = [x]$ for all $x \in \R^{n+1}\setminus\{0\}$ and $\lambda \in \R\setminus \{0\}$.
We write $[\xi:\xi_{n+1}] \coloneq [\xi_1: \cdots :\xi_n:\xi_{n+1}]$ for $(\xi,\xi_{n+1})=(\xi_1,\ \dots,\ \xi_n,\ \xi_{n+1}) \in (\R^n\times\R) \setminus \{0\}$.
Using this quotient map, all linear subspaces of $\R^{n+1}$ naturally correspond to projective subspaces of $\P^n$. 
More precisely, for any integer $k \in [1,n+1]$, we have the canonical correspondence
\begin{align*}
  \pi \colon \GG^k(\R^{n+1}) \to \GG^{k-1}(\P^n),\quad  W \mapsto \pi(W) = \{\pi(x) \colon x \in W\} \eqcolon \P^{k-1}_{W},
\end{align*}
where, abusing notation, we also write $\pi$ for the corresponding map.
Equipped with the standard Euclidean inner product $\inner{\cdot,\cdot}$ on $\R^{n+1}$, 
the notion of orthogonality extends naturally to $\P^n$:
we say two projective subspaces $\pi(V)$ and $\pi(W)$ are orthogonal in $\P^n$, denoted by $\pi(V)\perp \pi(W)$, 
if and only if the subspaces $V$ and $W$ are orthogonal in $\R^{n+1}$.
The orthogonal complement in $\R^{n+1}$ also induces a natural duality between $\GG^{k}(\P^n)$ and $\GG^{n-k-1}(\P^n)$:
\begin{align*}
  (\cdot)^{\perp}  \colon \GG^{k}(\P^n) \rightarrow \GG^{n-k-1}(\P^n), \quad A = \P^k_{W} \mapsto A^{\perp} \coloneq \P^{n-k-1}_{W^{\perp}}
\end{align*}
for $k=0, 1, \dots, n-1$.
For example, the orthogonal complement of a point $x = [v] \in \P^n$ is defined as $x^{\perp} \coloneq \pi(v^\perp) \subset \P^n$,
where $v^{\perp}$ is the hyperplane in $\R^{n+1}$ orthogonal to $v$.
This orthogonal complement operator can be defined for any subset of $\P^n$:
\begin{align*}
  (\cdot)^{\perp} \colon 
              & \mathscr{P}(\P^n) \rightarrow \mathscr{F}(\P^n),\\
              &  A \mapsto A^{\perp} \coloneq \bigcap_{x \in A} x^\perp,
\end{align*}
which maps any subset of $\P^n$ to a closed projective subspace (possibly empty).
In particular, we have $({\P^n})^{\perp} = \emptyset$, $\emptyset^{\perp} = \P^n$
and $(A^\perp)^\perp = A$ for any projective subspace $A \subset \P^n$.

Geometrically, the projective space $\P^n$ is obtained from $\R^n$ by adding points at infinity.
In homogeneous coordinates,
\begin{align*}
  \P^n = \{[x_1:\cdots:x_{n+1}] : (x_1,\dots,x_{n+1}) \neq (0,\dots,0)\} = \P^n_{\aff} \cup \P^{n-1}_{\infty} ,
\end{align*}
where $\P^n_{\aff} := \{[x:1] : x \in \R^n\}$ 
and $\P^{n-1}_{\infty} := \{[x:0] : x \in \R^n\setminus\{0\}\}$ are the affine part and the part at infinity of $\P^n$.

To formalize this, we embed $\R^n$ into $\R^{n+1}$ as the hyperplane $x_{n+1}=1$ for affine points, 
and $x_{n+1} =0$ for the hyperplane at infinity (corresponding to directions of $\R^n$).
Let $\mu_1:\R^n\hookrightarrow\R^{n+1}$, $x\mapsto(x,1)$ and $\mu_0:\R^n\setminus \{0\} \hookrightarrow \R^{n+1}\setminus\{0\}$, $x\mapsto(x,0)$.
We define $\pi_1 := \pi\circ\mu_1$ and $\pi_0 := \pi\circ\mu_0$, so that
\begin{align*}
  \pi_1 &\colon \R^n \to \P^n_{\aff},\quad x\mapsto [x:1], \\
  \pi_0 &\colon \R^n\setminus\{0\} \to \P^{n-1}_{\infty},\quad x\mapsto [x:0],
\end{align*}
then we have $\P^n = \pi_1(\R^n) \cup \pi_0(\R^n\setminus\{0\})$.
We call $\pi_0(x)\in\P^{n-1}_{\infty}$ (equivalently, $[x]\in\P^{n-1}$) the projective direction (or simply direction) of $x \in \R^n\setminus\{0\}$.
Thus, we identify $\P^{n-1}$ with $\P^{n-1}_{\infty}\subset\P^n$ via $[x]\mapsto[x:0]$, i.e., $\P^{n-1} \cong \P^{n-1}_{\infty}$, and use it as the direction set of $\R^n$.

For an arbitrary set $E \subset \R^n$, we define the related projective set by
\begin{align}
  [E:1] := \cl_{\P^n} \pi_1(E), \label{eq:projset}
\end{align}
where $\cl_{\P^n}$ is the closure with respect to the standard topology of $\P^n$.
Notice that for a singleton $\{x\}\subset\R^n$, we have $\pi_1(x) = [\{x\}:1]$, 
thus \eqref{eq:projset} can be regarded as a natural extension of $\pi_1$.

For an affine subspace $A = y + A_0 \in \AA^k(\R^n)$ with $k \in \{1,\dots,n\}$, $y\in\R^n$ and $A_0\in\GG^k(\R^n)$, simple calculation gives
\begin{align}\label{eq:equation1}
    [A_0:1]& = \P^{k}_{A_0\times\R}, \quad [A:1] = \{[x:1] : x\in A\} \cup \{[x:0] : x\in A_0\setminus\{0\}\}.
\end{align}
We adopt the following notation for the projective direction set of $A$:
\begin{align*}
  [A:0] \coloneq [A:1] \cap \P^{n-1}_{\infty}.
\end{align*}
A simple calculation leads to
\begin{align}\label{eq:equation2}
 [A:0] = [A_0:0] =\P^{k-1}_{A_0\times \{0\}}.
\end{align}

These notations, while not standard, are adopted for convenience in analyzing affine subspaces from the projective perspective.

\begin{remark}\label{rem:V^{perp}}
  We will frequently use $\P^2_{V\times\R}$ as the natural embedding image of $V \in \GG^2(\R^n)$ into $\P^n$.
  For $n \ge 3$, the orthogonal complement of $\P^2_{V\times\R}$ in $\P^n$ is
  \begin{align}\label{eq:Vperp}
    (\P^2_{V\times\R})^{\perp} = \P^{n-3}_{V^{\perp}\times\{0\}},
  \end{align}
  which is nonempty and consists precisely of all the projective directions of $V^\perp$ (recall that $\P^{n-3}_{V^{\perp}\times\{0\}} = [V^\perp:0]$).
  For the planar case $n = 2$, $V \in \GG^2(\R^2)$ implies $V=\R^2$, 
  thus we have $(\P^2_{V\times\R})^{\perp} = \emptyset$.
  In this case we interpret the right-hand side of \eqref{eq:Vperp} as the empty set.
\end{remark}

\subsection{$(n-1)$-rectifiable set in $\R^n$}\label{sect:Rect}

In this subsection we recall the notion of an $(n-1)$-rectifiable set and its associated normal map.

\begin{definition}[$(n-1)$-rectifiable set and its normal map]\label{def:rect_normal}
  A set $E \subset \R^n$ is called countably $(n-1)$-rectifiable if it admits a disjoint decomposition
  \begin{align}\label{eq:rect}
    E = E_0 \sqcup\wt{E},\ \text{and}\  \wt{E}=\bigsqcup_{i=1}^\infty E_i,
  \end{align}
  where $\HH^{n-1}(E_0)=0$ and each $E_i$ is an embedded $C^1$ submanifold of dimension $n-1$ in $\R^n$.
  We call $\wt{E}$ the regular part of $E$.

  For such an $E$, each $E_i$ admits a continuous classical normal map into the unit sphere $\S^{n-1}$; 
  hence the regular part $\wt{E}$ admits a piecewise continuous classical normal map 
  $\wh{N} \colon \wt{E} \to \S^{n-1}$.
  The normal map $N \colon E \to \P^{n-1}_{\infty}$ of $E$ is then defined by
  \begin{align*}
    N(x) = \pi_0(\wh{N}(x)) \quad \text{for } x \in \wt{E},
  \end{align*}
  while $N(x)$ may be chosen arbitrarily in $\P^{n-1}_{\infty}$ for $x \in E_0$.
  Thus, $N$ is continuous $\HH^{n-1}$-almost everywhere on $E$.

  For brevity, we omit the term ``countably'' and simply call such sets $(n-1)$-rectifiable sets.
  We denote by $\Rect$ the collection of all $(n-1)$-rectifiable sets in $\R^n$.
\end{definition}

\begin{remark}
Although the decomposition in \eqref{eq:rect} is not unique (the sets $\{E_i\}$ may vary), 
the regular part $\wt{E}$ and the normal map $N$ are uniquely determined up to an $\HH^{n-1}$-null set. 
Hence the definition of $N$ is independent of the chosen decomposition in the sense of $\HH^{n-1}$-a.e. equivalence.
\end{remark}

\begin{remark}\label{rmk:recthypersurface}
The questions discussed in this paper are invariant under modifications on $\HH^{n-1}$-null sets. 
In particular, since $\HH^{n-1}(E)=\HH^{n-1}(\wt{E})$, we may, 
without loss of generality, identify $E\in\Rect$ with its regular part $\wt{E}$, 
which is a countable disjoint union of embedded $C^1$ hypersurfaces equipped with a piecewise continuous normal map.
\end{remark}

For more details about rectifiable sets, see \cite[Chapter 3]{LY2002} or \cite[Chapter 3]{Fed1969}.

We use $n_x \in \AA^1(\R^n)$ as the normal line to $E$ at $x$, and set 
\begin{align*}
  \nu_x=[n_x:1] \in \GG^1(\P^n)
\end{align*}
as the projective normal line to $E$ at $x$.

\subsection{Orientation-preserving isometries in $\R^n$}\label{ssect:isom_R^n}
This section includes the notations related to translations, rotations and their compositions, known as orientation-preserving isometries, 
which we simply call isometries.

\subsubsection{Basic definitions}
\textbf{Translations.} 
All translations in $\R^n$ form the translation group, which is isomorphic to the usual additive group $\R^n$.
Each translation is a transformation of $\R^n$ and can be parameterized by a vector $z \in \R^n$ as
\begin{align*}
  \tau_z \colon \R^n \to \R^n,\ x \mapsto z+x.
\end{align*}

\textbf{Rotations and simple rotations.} 
Let $\SO(n)=\{A\in \GL_n(\R) \colon A^{\Tt}A=I, \det(A)=1\}$ be the group of special orthogonal matrices.
Then each pair $(z,A) \in \R^n\times\SO(n)$ uniquely determines a rotation around $z$,
which is
\begin{align*}
  \rho_{z,A} \colon \R^n \to \R^n,\ x \mapsto z+A(x-z).
\end{align*}

To simplify an arbitrary rotation, we first consider the real spectral decomposition of $A \in \SO(n)$ given by
\begin{align*}
  A =
  P^{\Tt}\begin{pmatrix}
          R(\theta_1)& & & \\
           & \ddots& &  \\
           & & R(\theta_k)& \\
           & & & I_{n-2k} 
  \end{pmatrix}P 
  = \sum_{i=1}^k P_i^{\Tt} R(\theta_i) P_i + Q^{\Tt} Q,
\end{align*} 
where $k \le \floor{\frac{n}{2}}$, $P^{\Tt}= \mat[p]{P_1^{\Tt}& \cdots & P_k^{\Tt} & Q^{\Tt} }$ is an orthogonal matrix,
$R(\theta_i) = \mat[p]{\cos\theta_i & -\sin\theta_i \\ \sin\theta_i & \cos\theta_i}$ and $\theta_i \in (-2\pi,2\pi)$ for $i=1, \cdots,k$.
Then the arbitrary rotation can be rewritten as 
\begin{align*}
  \rho_{z,A}(x) 
  =z + \sum_{i=1}^k P_i^{\Tt} R(\theta_i) P_i(x-z) + (I-\sum_{i=1}^kP_i^{\Tt}P_i)(x-z),
\end{align*}
where we use the fact $Q^{\Tt}Q=I-\sum_{i=1}^kP_i^{\Tt}P_i$.
Notice that $P_iP_j^{\Tt} = 0$ for $i \neq j$, we have
\begin{align}
  \rho_{z,A}(x) 
  = z + \prod_{i=1}^k (P_i^{\Tt}R(\theta_i) P_i +I-P_i^{\Tt}P_i)(x-z). \label{eq:deco_rotation1}
\end{align}

The rotation $\rho_{z,A}$ with $k=1$ in \eqref{eq:deco_rotation1} plays an important role in the decomposition of arbitrary rotation.
Thus, we define the following concept.

\begin{definition}[Simple rotation in $V$]\label{def:V_rotat}
  Let $V \in \GG^2(\R^n)$ with orthonormal basis $\{u,v\} \subset V$, $z \in \R^n$ and $\theta \in \R$.
  A simple rotation in $V$ of angle $\theta$ around $z$, denoted by $\RR_{z, V}^\theta$, 
  is the affine transformation
  \begin{align*}
    \RR_{z, V}^\theta \colon 
    &\R^n \to \R^n, \\
    &x \mapsto z+(P^{\Tt}R(\theta)P + I-P^{\Tt}P)(x-z),
  \end{align*}
  where $P^{\Tt}= 
            \begin{pmatrix}
              u & v
            \end{pmatrix}$ 
  and  $R(\theta)=
                  \begin{pmatrix}
                    \cos\theta & -\sin\theta \\ 
                    \sin\theta & \cos\theta
                  \end{pmatrix}$.
  This operator actually defines a rotation which is essentially the planar rotation of angle $\theta$ in $V$ 
  that keeps the affine subspace $z+V^\perp$ invariant.
  We then regard $z+V^\perp$ as the axis of the rotation $\RR_{z, V}^\theta$.
\end{definition}

\begin{remark}\label{rmk:equi-rotat}
  For fixed $\theta$ and $V$, the simple rotation $\RR_{z,V}^{\theta}$ depends on its axis $z+V^\perp$, rather than on the particular choice of $z\in\R^n$.
  Hence, for any $z\in\R^n$, we may represent the same rotation by $\RR_{z',V}^{\theta}$, 
  where $z'=P_V(z)\in V$ is the orthogonal projection of $z$ onto $V$.
\end{remark}

\begin{remark}
  We use the same notation $\RR_{z, V}^\theta$ both for the affine transformation $\RR_{z, V}^\theta \colon \R^n \to \R^n$ 
  and for the continuous motion that rotates any point $x \in \R^n$ around the axis $z+V^\perp$.
  We always regard simple rotations as continuous motions, thus $\RR_{z, V}^\theta$ and $\RR_{z, V}^{\theta+2\pi}$ are different.
\end{remark}

Now go back to the decomposition \eqref{eq:deco_rotation1}, and let $V_i \in \GG^2(\R^n)$ be the subspace spanned by the rows of $P_i$. 
Using the above definition, \eqref{eq:deco_rotation1} can be restated as decomposing a given rotation into a sequence of simple rotations in the 2-planes $\{V_i\}_{i=1}^k$, 
which leads to the following lemma.
\begin{lemma}\label{lem:decomp_rotat}
  Suppose $\rho_{z,A}$ is an arbitrary rotation around $z \in \R^n$, 
  then there exist $k \le \floor{\frac{n}{2}}$, orthogonal planes $\{V_i\}_{i =1}^{k} \subset \GG^2(\R^n)$ and angle $\theta_i \in (-2\pi,2\pi)$ for each $i=1\cdots,k$, 
  such that 
  \begin{align*}
    \rho_{z,A} = \prod_{i=1}^k \RR_{z, V_i}^{\theta_i}.
  \end{align*}
\end{lemma}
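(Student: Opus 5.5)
The plan is to read the decomposition directly off the identity \eqref{eq:deco_rotation1}, which has already been derived above, and match each factor with \autoref{def:V_rotat}. First write $\rho_{z,A}(x) = z + A(x-z)$. Use the real spectral decomposition $A = P^{\Tt}\operatorname{diag}(R(\theta_1),\dots,R(\theta_k),I_{n-2k})P$ with $k\le\floor{\frac n2}$ and $\theta_i\in(-2\pi,2\pi)$. For each $i$, let $V_i$ be the span of the two rows of $P_i$. Since $P$ is orthogonal, these rows are orthonormal, and rows belonging to different blocks are orthogonal. Hence the $V_i$ are mutually orthogonal 2-planes, and the transposed rows $u_i,v_i$ form an orthonormal basis of $V_i$ with $P_i^{\Tt}=(u_i\ v_i)$.

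Next, set $M_i := P_i^{\Tt}R(\theta_i)P_i + I - P_i^{\Tt}P_i$. By \autoref{def:V_rotat}, $\RR^{\theta_i}_{z,V_i}(x) = z + M_i(x-z)$. Since every $\RR^{\theta_i}_{z,V_i}$ fixes $z$, composition only multiplies the linear parts:
\begin{align*}
  \Bigl(\prod_{i=1}^k \RR^{\theta_i}_{z,V_i}\Bigr)(x) = z + \Bigl(\prod_{i=1}^k M_i\Bigr)(x-z).
\end{align*}
It therefore suffices to show $\prod_i M_i = A$, which is exactly the content of \eqref{eq:deco_rotation1}.

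The only step needing care is verifying that product identity, together with the independence of the order of the factors. Orthonormality gives $P_iP_i^{\Tt}=I_2$, and orthogonality of distinct blocks gives $P_iP_j^{\Tt}=0$ for $i\ne j$. Write $M_i = I + P_i^{\Tt}(R(\theta_i)-I_2)P_i$. Then for $i\neq j$ the cross term in $M_iM_j$ contains the factor $P_iP_j^{\Tt}=0$, so
\begin{align*}
  M_iM_j = I + P_i^{\Tt}(R(\theta_i)-I_2)P_i + P_j^{\Tt}(R(\theta_j)-I_2)P_j = M_jM_i.
\end{align*}
Inducting on the number of factors gives
\begin{align*}
  \prod_{i=1}^k M_i = I + \sum_{i=1}^k P_i^{\Tt}(R(\theta_i)-I_2)P_i = \sum_{i=1}^k P_i^{\Tt}R(\theta_i)P_i + I - \sum_{i=1}^k P_i^{\Tt}P_i.
\end{align*}
Finally, $I=P^{\Tt}P=\sum_i P_i^{\Tt}P_i + Q^{\Tt}Q$, so the right-hand side equals $\sum_i P_i^{\Tt}R(\theta_i)P_i + Q^{\Tt}Q = A$. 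This completes the proof. The degenerate case $k=0$, where $A=I$, is the empty product, i.e. the identity.
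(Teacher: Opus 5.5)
Your proposal is correct and follows the paper's argument: the paper also takes the real spectral decomposition of $A$, lets $V_i$ be the row space of $P_i$, and uses $P_iP_j^{\Tt}=0$ for $i\neq j$ together with $Q^{\Tt}Q=I-\sum_i P_i^{\Tt}P_i$ to get the product formula \eqref{eq:deco_rotation1}. You carry out that same verification in more detail, writing each factor as $M_i = I + P_i^{\Tt}(R(\theta_i)-I_2)P_i$ and noting that the factors commute.
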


\textbf{Orientation-preserving isometries.} 
All rotations, translations and their ordered compositions are called orientation-preserving isometries, or simply isometries.
All orientation-preserving isometries in $\R^n$ form a transformation group denoted by $\Isom(\R^n)$, 
which is canonically isomorphic to the semidirect product $\R^n \rtimes \SO(n)$.
Each element $(z, A) \in \R^n \times \SO(n)$ corresponds to an isometry $\iota_{(z,A)} \in \Isom(\R^n)$ defined as
\begin{align}\label{eq:decompo_isometry-trans-rot}
  \iota_{(z,A)} \colon \R^n \to \R^n,\ x \mapsto z+Ax.
\end{align}

For a given $V \in \GG^2(\R^n)$, the simple rotations and translations in $V$ (including the identity $\id$) are examples of isometries.
They will play an important role in our discussions, and we define them as follows:

\begin{definition}[Simple isometry in $V$]\label{def:Isom(V)}
  For $V \in \GG^2(\R^n)$, all simple rotations and translations in $V$ are called simple isometries in $V$, and they form the set
  \begin{align*}
    \Isom(V) := \{\RR_{z,V}^{\theta} : z \in V,\ \theta \in \R\setminus\{0\}\} \cup \{\tau_z : z \in V\},
  \end{align*}
  where we use the convention that the axis of $\RR_{z,V}^{\theta}$ is represented by $z\in V$ (see \autoref{rmk:equi-rotat}).
\end{definition}

It is convenient to treat all simple rotations and translations uniformly, which leads to the following:

\begin{definition}[Simple isometry in $\R^n$]
  All simple rotations and translations in $\R^n$ are called simple isometries in $\R^n$.
  They form the set
  \begin{align*}
    \II(\R^n) := \bigcup_{V \in \GG^2(\R^n)} \Isom(V),
  \end{align*} 
  which is a proper subset of $\Isom(\R^n)$ whenever $n\ge 3$.
\end{definition}

Since any translation can be decomposed as the composition of two simple rotations, 
and any rotation can be factorized into at most $\floor{\frac{n}{2}}$ simple rotations (see \autoref{lem:decomp_rotat}),
we can simplify a general isometry by simple rotations, which leads to the following lemma.
\begin{lemma}\label{lem:decomp_isometry}
  Any orientation-preserving isometry in $\R^n$ can be decomposed as the composition of at most $\floor{\frac{n}{2}}+2$ simple rotations.
\end{lemma}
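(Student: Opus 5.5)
We decompose a given isometry $\iota_{(z,A)}$ from \eqref{eq:decompo_isometry-trans-rot} into its linear part and its translation part:
\begin{align*}
  \iota_{(z,A)} = \tau_z \circ \rho_{0,A},
\end{align*}
since $\rho_{0,A}(x) = Ax$ and $\tau_z(Ax) = z + Ax$. We then treat the two factors separately.

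\textbf{Step 1 (rotation part).} For $\rho_{0,A}$ we invoke \autoref{lem:decomp_rotat} with center $0$. This writes $\rho_{0,A}$ as a product $\prod_{i=1}^k \RR_{0,V_i}^{\theta_i}$ with $k \le \floor{\frac n2}$, so the rotation part uses at most $\floor{\frac n2}$ simple rotations.

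\textbf{Step 2 (translation part).} It remains to show that any translation $\tau_z$ is a composition of two simple rotations. If $z=0$, then $\tau_z=\id$. In that case we either regard it as the empty composition or write it as $\RR_{0,V}^{\theta}\circ\RR_{0,V}^{-\theta}$; both fit within the count. If $z\neq 0$, choose any $V\in\GG^2(\R^n)$ containing $z$ and a nonzero angle $\theta$ with $\theta\notin 2\pi\mathbb{Z}$. The claim is that
\begin{align*}
  \RR_{w,V}^{\theta}\circ\RR_{0,V}^{-\theta}=\tau_z
\end{align*}
for a suitable $w\in V$. To check this, write $R_V := P^{\Tt}R(\theta)P + I - P^{\Tt}P$. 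Then
\begin{align*}
  \RR_{w,V}^{\theta}\circ\RR_{0,V}^{-\theta}(x) = w + R_V\bigl(R_V^{-1}x - w\bigr) = x + (I-R_V)w.
\end{align*}
So we need $(I-R_V)w = z$. The operator $I-R_V$ vanishes on $V^\perp$ and acts on $V$ as $I-R(\theta)$. The matrix $I-R(\theta)$ is invertible because $\theta\notin 2\pi\mathbb{Z}$. Since $z\in V$, the vector $w := (I-R(\theta))^{-1}z$, computed inside $V$, solves the equation. This is the only genuine computation in the proof.

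\textbf{Step 3 (counting).} Combining the two steps, $\iota_{(z,A)} = \RR_{w,V}^{\theta}\circ\RR_{0,V}^{-\theta}\circ\prod_{i}\RR_{0,V_i}^{\theta_i}$. This is a composition of at most $\floor{\frac n2}+2$ simple rotations.

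The only point needing care is the planar identity in Step 2, that is, the invertibility of $I-R(\theta)$ on $V$. The remaining steps are bookkeeping with \autoref{lem:decomp_rotat} and with the conventions of \autoref{rmk:equi-rotat} for representing axes.
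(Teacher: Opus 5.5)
Your proof is correct and takes the same route as the paper. The paper writes $\iota_{(z,A)}=\tau_z\circ\rho_{0,A}$, factors the linear part into at most $\floor{\frac n2}$ simple rotations via \autoref{lem:decomp_rotat}, and writes the translation as two simple rotations — a fact it only asserts, which your check $\RR_{w,V}^{\theta}\circ\RR_{0,V}^{-\theta}=\tau_{(I-R_V)w}$, using that $I-R(\theta)$ is invertible on $V$ for $\theta\notin 2\pi\mathbb{Z}$, correctly fills in.
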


\subsubsection{Simple isometry from the projective perspective}\label{ssect:simple projective rotations}

In this section we describe the simple isometries uniformly from the projective perspective.

Recall that a simple rotation $\RR_{z,V}^{\theta} \ (\theta \neq 0)$ is uniquely determined by its axis $z+V^{\perp} \in \AA^{n-2}(\R^n)$ and its directed rotation angle $\theta \in \R$.
Likewise, a translation $\tau_z \ (z\neq 0)$ is uniquely determined by the hyperplane $z^{\perp} \in \GG^{n-1}(\R^n)$ and the directed translation length $\sign(z)\abs{z}$,
where $\sign(z) \in \{\pm1\}$ denotes the sign of the first non-zero coordinate of $z$,
distinguishing $\tau_z$ from $\tau_{-z}$.

By focusing on the quantitative information (directed rotation angle or directed translation length) and the positional information (rotation axis or translation direction) of the simple isometries, 
we introduce the following concepts.

\begin{definition}[Projective axis and projective angle of simple isometry]\label{def:proaxis_isom}
  For a simple isometry $\iota \in \II(\R^n)\setminus\{\id\}$, we define its projective axis $\aA_{\iota} \in \GG^{n-2}(\P^n)$ as
  \begin{align}\label{eq:projaxis}
    \aA_{\iota} = 
    \begin{cases}
      [z+V^{\perp}:1], &\text{if} \ \iota = \RR_{z,V}^{\theta}, \\
      [z^{\perp}:0], &\text{if} \ \iota = \tau_z.
    \end{cases}
  \end{align}
  The projective angle $\theta_{\iota} \in \R\setminus\{0\}$ is defined as
  \begin{align*}
    \theta_{\iota} = 
    \begin{cases}
      \theta, &\text{if} \ \iota = \RR_{z,V}^{\theta}, \\
      \sign(z)\abs{z} , &\text{if} \ \iota = \tau_z.
    \end{cases} 
  \end{align*}
\end{definition}

These notations unify the position and quantitative data of simple isometries, 
and allow us to describe the simple isometries in the language of $\P^n$,
which leads to the following definition.
\begin{definition}\label{def:prorota_R^n}
  Any non-identity simple isometry in $\R^n$ can be parameterized by the space $(\R\setminus\{0\}) \times \GG^{n-2}(\P^n)$, i.e., 
  each simple isometry $\iota$ can be interpreted as a simple projective rotation in $\P^n$ with angle $\theta_{\iota} \in \R\setminus\{0\}$ and axis $\aA_\iota \in \GG^{n-2}(\P^n)$,
  denoted by $\iota = \RR_{\aA_{\iota}}^{\theta_{\iota}}$.
  Thus, we have
  \begin{align*}
    \II(\R^n)\setminus\{\id\} =\{\RR_{\aA}^{\theta} \colon \theta \in \R\setminus\{0\}, \aA \in \GG^{n-2}(\P^n)\}.
  \end{align*}
\end{definition}

\subsubsection{Simple isometry in $V$, Part $1$: projective description}

In this section we fix a 2-plane $V$ and discuss the non-identity isometries in $\Isom(V)$ from the projective perspective.
First we show the notion of projective axis can be further simplified in the setting of $\Isom(V)$.

For a simple rotation $\iota = \RR_{z,V}^{\theta}$ with $\theta\neq0$ and $z \in V$, 
applying \eqref{eq:equation1} and \eqref{eq:equation2} to \eqref{eq:projaxis} gives 
\begin{align*}
  \aA_{\iota} = [z+V^\perp:1] = \{[x:1]\colon x \in z+V^\perp\}\cup \P^{n-3}_{V^\perp \times\{0\}} \supsetneq \P^{n-3}_{V^\perp \times\{0\}},
\end{align*}
Similarly, for a translation $\iota=\tau_z$ with $z \in V$, the combination of \eqref{eq:equation2} and \eqref{eq:projaxis} yields
\begin{align*}
  \aA_{\iota} = [z^\perp:0] = \P^{n-2}_{z^\perp \times\{0\}}\supsetneq \P^{n-3}_{V^\perp \times\{0\}}.
\end{align*}
To summarize, for $\iota \in \Isom(V)\setminus\{\id\}$, we have 
\begin{align*}
  \aA_{\iota} \supsetneq \P^{n-3}_{V^\perp \times\{0\}} = (\P^2_{V\times \R})^\perp,
\end{align*}
where the last equality follows from \autoref{rem:V^{perp}}.
Thus, the intersection of $\aA_{\iota}$ with $\P^2_{V\times \R}$ is a single projective point, 
which we define as the projective center of $\iota$.
(For $n=2$, recall \autoref{rem:V^{perp}}, we interpret $\P^{n-3}_{V^{\perp}\times\{0\}}$ as the empty set, and the statements above remain valid.)

\begin{definition}\label{def:prorota_V}
  For $\iota \in \Isom(V)\setminus\{\id\}$, we define its projective center as
  \begin{align}\label{eq:proj_center2}
    \cC_{\iota} \coloneq \aA_{\iota} \cap \P^2_{V\times \R}  \in \P^2_{V\times \R}.
  \end{align}
\end{definition}

The projective center and projective axis are closely related by the following concept of axis-lifting operator:

\begin{definition}[Axis-lifting operator]\label{def:center_axis_corrspd}
  For fixed $V \in \GG^2(\R^n)$, there is a canonical correspondence between $\P^2_{V\times \R}$ (the space of projective centers) 
  and $\{Y \in \GG^{n-2}(\P^n) \colon Y \supset \P^{n-3}_{V^\perp\times\{0\}}\}$ (the space of projective axes) given by
  \begin{align*}
    \aA \colon &\P^2_{V\times \R} \to \{Y \in \GG^{n-2}(\P^n) \colon Y \supset \P^{n-3}_{V^\perp\times\{0\}}\} \\
    &x \mapsto \aA(x) \coloneq \spn_{\P^n} \{x, \P^{n-3}_{V^\perp\times \{0\}} \},
  \end{align*} 
  which we call the axis-lifting operator.
  This operator extends naturally to any nonempty subset $A \subset \P^2_{V\times \R}$ by
    \begin{align*}
      \aA(A) \coloneq   \bigcup_{x \in A}\aA(x).
    \end{align*}
  And for a connected closed set $A \in \mathscr{F}(\P^2_{V\times \R})$, we have $\aA(A)= \spn_{\P^n}\{A,\P^{n-3}_{V^\perp\times \{0\}}\}$.
\end{definition}

Thus, for any $\iota \in \Isom(V)\setminus\{\id\}$, the projective axis $\aA_\iota$ and
the projective center $\cC_{\iota}$ from \eqref{eq:projaxis} and \eqref{eq:proj_center2}
can be transformed into each other naturally:
\begin{align*}
  \cC_{\iota}  = \aA_{\iota} \cap \P^2_{V\times \R}; \quad
  \aA_{\iota} = \aA(\cC_{\iota}).
\end{align*} 
For $\iota \in \Isom(V)\setminus\{\id\}$, we abuse the notation $\iota=\RR_{\aA_{\iota}}^{\theta_{\iota}}$ from \autoref{def:prorota_R^n} and write 
\begin{align*}
  \iota=\RR_{\cC_{\iota}}^{\theta_{\iota}},
\end{align*}
which also yields that
  \begin{align*}
    \Isom(V)\setminus\{\id\} = \{\RR_{\cC}^{\theta} \colon \theta \in \R\setminus\{0\} , \cC \in \P^2_{V\times\R} \}.
  \end{align*}

The axis-lifting operator $\aA$ connects projective centers and projective axes, and lifts information from $\P^2_{V\times\R}$ to $\P^n$.
It has the following properties:
  \begin{enumerate}
    \item If $A$ is a closed subset of $\P^2_{V\times\R}$, then $\aA(A)$ is closed in $\P^n$;
    \item For any family $\{A_m\}_m \subset \mathscr{P}(\P^2_{V\times\R})$, we have 
          \begin{enumerate}[label=(\alph*)]
            \item $\bigcup_m \aA(A_m) = \aA(\bigcup_m A_m)$,
            \item $\bigcap_m \aA(A_m) \supset \aA(\bigcap_m A_m)$,
          \end{enumerate}
    \item For $W\in \GG^2(V\times\R)$, we have $\aA(\P^1_{W}) = \spn_{\P^n} \{\P^1_{W}, \P^{n-3}_{V^{\perp}\times\{0\}} \} \in \GG^{n-1}(\P^n)$,
  \end{enumerate}
which follow from the elementary properties of linear subspaces under intersection and union.

\subsubsection{Simple isometry in $V$, Part $2$: $V\times\R$-parametrization}

The projective description of non-identity simple isometries in $V$ — namely, their projective axes, centers, and angles — 
is useful for analyzing the geometry of moving a set along an isometry, 
but reveals little about the composition structure of $\Isom(V)$. 
We therefore introduce another parametrization of $\Isom(V)$, 
which comes from a detailed expression of the projective center and is better suited for studying compositions.

For any non-identity isometry $\iota \in \Isom(V)\setminus\{\id\}$, a direct computation yields
  \begin{align*}
    \cC_{\iota} =
    \begin{cases}
      [z:1], &\text{if} \ \iota = \RR_{z,V}^{\theta}\ \text{for} \ \theta\in \R\setminus\{0\} \ \text{and} \ z \in V,\\
      [\RR_{0,V}^{\frac{\pi}{2}}(z):0], &\text{if} \ \iota = \tau_z \ \text{for} \ z\in V\setminus\{0\}.
    \end{cases}
  \end{align*}
Then the vector $(\RR_{0,V}^{\frac{\pi}{2}}(z),0)$ can uniquely determine the translation $\tau_z$.
However, for the simple rotation $\RR_{z,V}^{\theta}$, the vector $(z,1)$ encodes only the positional information (the projective center) 
and carries no information about the angle.
Since homogeneous coordinates are invariant under multiplication by non-zero scalars, 
we may multiply by $\theta$ to obtain the vector $(\theta z,\theta)$, 
which determines both the projective center (i.e., $[\theta z:\theta] = [z:1] =\cC_{\iota}$) and the projective angle $\theta \in \R\setminus\{0\}$.
All the newly constructed vectors lie in $V\times\R$.
And we can regard $\id$ as the zero vector $0\in V\times\R$.

Motivated by the above observation, 
we use $V\times\R$ to parameterize the isometries in $\Isom(V)$, which leads to the following definition.
 
 \begin{definition}[$V\times \R$-parametrization]\label{def:labelby_V}
  For a given $V \in \GG^2(\R^n)$, we define a bijection between $\Isom(V)$ and $V\times\R$ as 
  \begin{align*}
    \eta \colon \Isom(V) \to V\times\R, \ \iota \mapsto \eta(\iota),
  \end{align*}
  which satisfies
  \begin{align}\label{eq:nonproj_center}
      \eta(\iota) 
      \coloneq 
        \begin{cases}
          (\theta z,\theta), &\text{if} \ \iota = \RR_{z,V}^{\theta}, \\
          (\RR_{0,V}^{\frac{\pi}{2}}(z),0), &\text{if} \ \iota = \tau_z,
        \end{cases}
    \end{align}
where $\theta \in \R\setminus\{0\}$ and $z \in V$.
\end{definition}

\begin{remark}
  The idea of using Euclidean space to parameterize rotations and translations via \eqref{eq:nonproj_center} and \autoref{def:labelby_V} has been used for $n=2$ in \cite[Section 3.2]{ChCs2019},
  where complex numbers are used to express planar rotations.
  Our approach is inspired by their method; the operator $\RR_{0,V}^{\frac{\pi}{2}}$, the simple rotation around the axis $V^\perp$ by angle $\frac{\pi}{2}$ appeared in \eqref{eq:nonproj_center}, 
  serves as the imaginary unit ``$\mathrm{i}\mkern1mu$'' in the 2-plane $V$. 
\end{remark}

As stated above, the vector $\eta(\iota)$ encodes the full geometric information of $\iota \in \Isom(V)$.
Especially, for $\iota \in \Isom(V)\setminus\{\id\}$, we have 
\begin{align*}
  \cC_{\iota} = [\eta(\iota)].
\end{align*}
The vector norm $\abs{\eta(\iota)}$ will be used to quantify the Lebesgue measure of the set swept out by moving a set along $\iota$,
which will be introduced in \autoref{sect:lem} (see \autoref{lem:measisom} for more details).
We say $\iota \in \Isom(V)$ is a small isometry, if the vector norm $\abs{\eta(\iota)}$ is small.

Next we describe the composition property of $\Isom(V)$ under the $V\times\R$-parametrization.
For any $\iota \in \Isom(V)$ and integer $N \ge 1$, we have
  \begin{align}\label{eq:decomp_N_terms}
    \iota = [\eta^{-1}(\frac{\eta(\iota)}{N})]^N 
    \coloneq \underbrace{\eta^{-1}(\frac{\eta(\iota)}{N}) \circ \cdots \circ \eta^{-1}(\frac{\eta(\iota)}{N})}_{N \ \text{terms}},
  \end{align}
  where the composition is taken from right to left.
  This identity corresponds to splitting the isometry $\iota=\RR^{\theta}_{\cC}$ into $N$ smaller isometries $\RR^{\theta/N}_{\cC}$ with the same projective center $\cC \in \P^2_{V\times\R}$.

  More importantly, the $V\times \R$-parametrization can be used to approximate the composition of small isometries in $\Isom(V)$ by the vector addition in $V\times\R$.
  This property is analogous to \cite[Lemma 5.4, Lemma 5.5]{ChCs2019}; we summarize it as the following lemma and omit the proof, which is similar to the one given there.

  \begin{lemma}\label{lem:approx_N_divides}
    Suppose $x_0, \wt{x}_1 \in (V\times\R)\setminus\{0\}$.
    For any $\epsilon >0$, if $N$ is a sufficiently large integer, 
    then there exists $x_1 \in (V\times\R)\setminus\{0\}$, such that
    \begin{enumerate}[label=(\arabic*)]
      \item $\eta\bg{\eta^{-1}(x_0/N)\circ \eta^{-1}(x_1/N)} = x_0/N + \wt{x}_1/N$,
      \item $\abs{x_1-\wt{x}_1} \le \epsilon$.
    \end{enumerate}
    If $x_0, \wt{x}_1 \in V\times\{0\}$, then we can take $x_1 = \wt{x}_1 \in V\times\{0\}$ which still satisfies the first condition.
  \end{lemma}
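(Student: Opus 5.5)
The plan is to make the group law of $\Isom(V)$ explicit in the coordinates $\eta$, and then solve the required identity exactly, component by component. We do not approximate.

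\textbf{Complex coordinates.} Identify $V$ with $\mathbb{C}$, with $\RR_{0,V}^{\frac{\pi}{2}}$ acting as multiplication by $\mathrm{i}\mkern1mu$. Every element of $\Isom(V)$ acts on $V$ as $x\mapsto e^{\mathrm{i}\mkern1mu\theta}x+b$, and acts trivially on the $V^\perp$ component. We record it as the pair $(\theta,b)\in\R\times\mathbb{C}$. With this convention composition reads
\begin{align*}
(\theta_a,b_a)\circ(\theta_b,b_b)=(\theta_a+\theta_b,\ e^{\mathrm{i}\mkern1mu\theta_a}b_b+b_a).
\end{align*}
For $\RR_{z,V}^{\theta}$ one has $b=(1-e^{\mathrm{i}\mkern1mu\theta})z=\varphi(\theta)\,w$, where $w=\theta z$ and $\varphi(\theta):=(1-e^{\mathrm{i}\mkern1mu\theta})/\theta$. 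For $\tau_z$ one has $\eta=(\mathrm{i}\mkern1mu z,0)$, so $b=z=-\mathrm{i}\mkern1mu w$. Setting $\varphi(0):=-\mathrm{i}\mkern1mu$, $\varphi$ becomes real-analytic, and it is nonzero for $\abs{\theta}<2\pi$. This gives the uniform formula
\begin{align*}
\eta^{-1}(w,t)=(t,\varphi(t)w),\qquad (w,t)\in V\times\R .
\end{align*}
This is the key structural step. It replaces the case distinction in \autoref{def:labelby_V} by one continuous formula.

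\textbf{Solving for $x_1$.} Write $x_0=(w_0,t_0)$ and $\wt{x}_1=(\wt w_1,\wt t_1)$. We look for $x_1=(W_1,\wt t_1)$, so the angle components of (1) add up exactly. By the composition law, (1) reduces to a single linear equation in $W_1\in\mathbb{C}$:
\begin{align*}
e^{\mathrm{i}\mkern1mu t_0/N}\,\varphi(\wt t_1/N)\,W_1=\varphi\big((t_0+\wt t_1)/N\big)(w_0+\wt w_1)-\varphi(t_0/N)\,w_0 .
\end{align*}
For $N$ large all arguments of $\varphi$ are small, so the coefficient of $W_1$ is nonzero and $W_1$ is uniquely determined.

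\textbf{Estimate (2) and the special case.} As $N\to\infty$, the right-hand side tends to $-\mathrm{i}\mkern1mu\wt w_1$ and the coefficient tends to $-\mathrm{i}\mkern1mu$. By smoothness of $\varphi$ at $0$, $\abs{W_1-\wt w_1}=O(1/N)$, with the constant depending on $x_0,\wt{x}_1$. Hence $\abs{x_1-\wt x_1}\le\epsilon$ for $N$ large. Since $\wt x_1\neq0$, taking $\epsilon<\abs{\wt x_1}$ also guarantees $x_1\neq0$.

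If $x_0,\wt x_1\in V\times\{0\}$, then $t_0=\wt t_1=0$. All $\varphi$ values equal $-\mathrm{i}\mkern1mu$ and the exponential factor is $1$, so the equation gives $W_1=\wt w_1$ exactly for every $N$.

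\textbf{Main obstacle.} The only delicate point is verifying that translations and rotations fit into the single continuous formula $\eta^{-1}(w,t)=(t,\varphi(t)w)$. This requires checking the sign and orientation conventions for $\RR_{0,V}^{\frac{\pi}{2}}$ and $R(\theta)$, so that $\varphi(0)=-\mathrm{i}\mkern1mu$ rather than $+\mathrm{i}\mkern1mu$. Once that is settled, the rest is the linear solve above.
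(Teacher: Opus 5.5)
Your proof is correct. The paper gives no argument of its own and simply defers to Chang--Csörnyei's Lemmas 5.4--5.5, and your route is the natural direct computation behind that citation: you define $x_1$ through $\eta^{-1}(x_1/N)=\eta^{-1}(x_0/N)^{-1}\circ\eta^{-1}((x_0+\wt{x}_1)/N)$ by an exact linear solve, then read off $\abs{x_1-\wt{x}_1}=O(1/N)$ from $\varphi(h)=-\mathrm{i}+h/2+O(h^2)$. Your uniform formula $\eta^{-1}(w,t)=(t,\varphi(t)w)$ cleanly absorbs the rotation/translation case split, and the sign $\varphi(0)=-\mathrm{i}$ agrees with the convention $\eta(\tau_z)=(\RR_{0,V}^{\frac{\pi}{2}}(z),0)$.
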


  The combination of \eqref{eq:decomp_N_terms} and \autoref{lem:approx_N_divides} leads to the following result:
   \begin{lemma}\label{lem:approx_N_term}
    Suppose $\iota \in \Isom(V)\setminus \{\id\}$ with $\eta(\iota) = y_0+\wt{y}_1$ for some $y_0,\wt{y}_1 \in (V\times\R)\setminus\{0\}$.
    Then for any $\epsilon >0$ and sufficiently large integer $N$, there exists $y_1 \in B_{V\times\R}(\wt{y}_1,\epsilon)$ such that
    \begin{align*}
      \iota = \underbrace{(\iota_0 \circ\iota_1) \circ \cdots \circ (\iota_0 \circ\iota_1)}_{N \ \text{terms}},
    \end{align*}
   where the isometries are
   \begin{align*}
      \iota_0 \coloneq \eta^{-1}(\frac{y_0}{N}), \quad  
      \iota_1 \coloneq \eta^{-1}(\frac{y_1}{N}).
   \end{align*}
  \end{lemma}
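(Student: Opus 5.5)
The statement to prove is \autoref{lem:approx_N_term}. The plan is to combine the $N$-fold splitting identity \eqref{eq:decomp_N_terms} with \autoref{lem:approx_N_divides} applied to the pair $x_0=y_0$, $\wt{x}_1=\wt{y}_1$. The point is that the single small isometry $\eta^{-1}(\eta(\iota)/N)$ can be realised exactly as a composition of two small isometries. The first has the prescribed parameter $y_0/N$. The second has a parameter $y_1/N$ that is only approximately $\wt{y}_1/N$.

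Concretely, fix $\epsilon>0$. By \autoref{lem:approx_N_divides} with $x_0=y_0$ and $\wt{x}_1=\wt{y}_1$ (both nonzero by hypothesis), there is $N_0$ such that for every integer $N\ge N_0$ we obtain $y_1:=x_1\in(V\times\R)\setminus\{0\}$ with $\abs{y_1-\wt{y}_1}\le\epsilon$ and
\begin{align*}
  \eta\bigl(\eta^{-1}(y_0/N)\circ\eta^{-1}(y_1/N)\bigr)=\frac{y_0+\wt{y}_1}{N}=\frac{\eta(\iota)}{N}.
\end{align*}
Since $\eta$ is a bijection, this says $\iota_0\circ\iota_1=\eta^{-1}(\eta(\iota)/N)$. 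Here $\eta(\iota)\neq 0$ because $\iota\neq\id$, although this is not even needed for the bijection step. Substituting into \eqref{eq:decomp_N_terms} gives
\begin{align*}
  \iota=\bigl[\eta^{-1}(\eta(\iota)/N)\bigr]^N=(\iota_0\circ\iota_1)\circ\cdots\circ(\iota_0\circ\iota_1),
\end{align*}
with $N$ factors, which is the claimed factorisation.

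There are two minor points to tidy up. First, the lemma asks for $y_1$ in the open ball $B_{V\times\R}(\wt{y}_1,\epsilon)$, whereas \autoref{lem:approx_N_divides} gives only $\abs{y_1-\wt{y}_1}\le\epsilon$. I will therefore apply it with $\epsilon/2$ in place of $\epsilon$. Second, I should confirm that \eqref{eq:decomp_N_terms} holds for every $N$, including the translation case. This is immediate: for rotations $\RR_{z,V}^{\theta/N}$ composed $N$ times equals $\RR_{z,V}^{\theta}$, and for translations $\tau_{z/N}^N=\tau_z$, with $\eta$ linear in $\theta$ and in $z$ respectively.

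The only substantive ingredient is \autoref{lem:approx_N_divides}, which the paper takes as given. Assuming it, I expect no real obstacle. The one thing to watch is that "sufficiently large $N$" is inherited uniformly from that lemma for the fixed data $(y_0,\wt{y}_1,\epsilon/2)$.
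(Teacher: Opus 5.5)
Your proposal is correct and matches the paper's argument: the paper obtains this lemma directly by combining the splitting identity \eqref{eq:decomp_N_terms} with \autoref{lem:approx_N_divides} applied to $x_0=y_0$, $\wt{x}_1=\wt{y}_1$, which is exactly what you do. Applying that lemma with $\epsilon/2$ so that $y_1$ lands in the open ball is a harmless tidy-up.
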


  This lemma decomposes a given non-identity simple isometry $\iota$ into many small isometries 
  $\iota_0 $ and $\iota_1$ whose $V\times \R$-parametrization vectors satisfy special properties.
  This will be useful for the construction of zigzag path in \autoref{sect:VBandzigzag}.

\subsubsection{Paths of isometries}

An isometry path in $\R^n$ is a continuous map 
\begin{align*}
  \gamma \colon [0,1] \to \Isom(\R^n)
\end{align*}
with $\gamma(0) = \id$. 
We denote its image by $P := \gamma([0,1]) \subset \Isom(\R^n)$, 
and by slight abuse of notation, we refer to $P$ itself as an isometry path when the parametrization is clear from context.
Given two isometry paths $\gamma_1,\gamma_2:[0,1]\to\Isom(\R^n)$ with $\gamma_i(1)=\iota_i \in \Isom(\R^n)$ for $i =1,2$, 
their concatenation $\gamma_3 \coloneq \gamma_1 * \gamma_2$ is the path defined by
\begin{align*}
  (\gamma_1 * \gamma_2)(t) \coloneq
    \begin{cases}
    \gamma_1(2t), & 0\le t\le \frac12, \\
    \gamma_2(2t-1) \circ \iota_1, & \frac12 \le t \le 1.
    \end{cases}
\end{align*}
This gives a continuous path from $\id$ to $\iota_2 \circ \iota_1$.
Let $P_i = \gamma_i([0,1])$ for $i=1,2,3$,
we write $P_3 = P_2\circ P_1$ for the above concatenation of paths.

For a simple isometry $\iota = \RR_{\aA}^{\theta} \in \II(\R^n)$, 
we especially take $\gamma \colon [0,1] \to \Isom(\R^n),\ t \mapsto \RR_{\aA}^{t\theta}$,
which defines a path from $\id$ to $\iota$. 
This path is called the natural path of $\iota$ and is denoted by 
\begin{align*}
  L^{\iota}\coloneq \gamma([0,1]) \subset \Isom(\R^n).
\end{align*}

A path $P$ is called a polygonal path if it can be obtained as the concatenation of finitely many natural paths of simple isometries. 
Equivalently, there exist simple isometries $\iota_1,\ \dots,\ \iota_m \in \II(\R^n)$ such that $P = L^{\iota_m} \circ \cdots \circ L^{\iota_1}$, 
and each path $L^{\iota_j}$ is regarded as a segment of the polygonal path.

For $V \in \GG^2(\R^n)$, recall the $V\times\R$ parametrization given in \autoref{def:labelby_V}, we can regard simple isometries in $\Isom(V)$ as vectors in $V\times\R$.
Then any collection of parallel line segments in $V\times\R$ can be interpreted as a collection of simple isometry in $\Isom(V)$ with the same projective center.
Thus, a polygonal isometry path $P$ in $\Isom(V)$ can be visualized as a collection of line segments in $V\times\R$.
We use $\LL$ for a collection of parallel line segments. These ``line segment'' notations will be used to visualize the construction of paths in $\Isom(V)$.

For an isometry path $P \subset \Isom(\R^n)$ from $\id$ to $\iota \in \Isom(\R^n)$, 
the set covered during the procedure of moving $E$ to $\iota(E)$ along $P$ is $\bigcup_{p \in P}p(E)$.

\section{Useful lemmas on moving $(n-1)$-rectifiable sets}\label{sect:lem}
In this section we give some basic lemmas on moving an $(n-1)$-rectifiable set by simple isometries,
which will be used in \autoref{sect:Kak_isom} to prove \autoref{thm:global_Kak_isom} and \autoref{thm:local_Kak_isom}.  
We state our lemmas first and leave the proofs in the latter part of this section.

First we give two lemmas to estimate the Lebesgue measure of the set covered by moving an $(n-1)$-rectifiable set along a simple isometry.
The $V\times\R$-parametrization defined in \autoref{def:labelby_V} turns a simple isometry into a vector, whose length can be used to measure the effect of this simplified isometry.
Recall that for a simple isometry $\iota$, we use $L^{\iota}$ for the natural path from $\id$ to $\iota$.

\begin{lemma}[trivial measure estimate for simple isometry]\label{lem:measisom}
  Let $V \in \GG^2(\R^n)$ and $\iota \in \Isom(V)$ be a simple isometry. 
  Suppose $E \subset B(0,r)$ is a bounded $(n-1)$-rectifiable set in $\R^n$,
  then the Lebesgue measure of the set covered by moving $E$ along $\iota$ can be estimated as
  \begin{align}\label{eq:lem_measisom}
    \abs{\bigcup_{p \in L^{\iota}} p(E)} \lesssim_r \abs{\eta(\iota)} \HH^{n-1}(E),
  \end{align}
  where $\eta(\iota) \in V\times \R$ is defined in \eqref{eq:nonproj_center} and can be used to measure the effect of moving along $\iota$.
\end{lemma}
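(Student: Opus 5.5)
The plan is to view the swept set as the image of a Lipschitz map defined on $[0,1]\times E$. Then the $n$-dimensional measure of the image is bounded by the integral of the Jacobian, and the Jacobian is controlled by the speed of the motion.

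By \autoref{rmk:recthypersurface} we may assume $E=\wt E=\bigsqcup_i E_i$ with each $E_i$ an embedded $C^1$ hypersurface contained in $B(0,r)$. If $\iota=\id$ there is nothing to prove. Otherwise define
\begin{align*}
  F\colon [0,1]\times E\to\R^n,\qquad F(t,x)=\gamma(t)(x),
\end{align*}
where $\gamma(t)=\RR^{t\theta}_{\cC_\iota}$ is the natural path. Then $\bigcup_{p\in L^\iota}p(E)=F([0,1]\times E)$. The set $[0,1]\times E$ is $n$-rectifiable and satisfies $\HH^n([0,1]\times E)\approx\HH^{n-1}(E)$.

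The key step is a pointwise bound on the velocity,
\begin{align*}
  \abs{\partial_t F(t,x)}\lesssim_r\abs{\eta(\iota)}\qquad\text{for } x\in B(0,r).
\end{align*}
\begin{enumerate}
  \item For a translation $\tau_z$ we have $\partial_tF=z$, and $\abs{z}=\abs{\eta(\iota)}$ because $\RR^{\pi/2}_{0,V}$ is an isometry.
  \item For a rotation $\RR^{\theta}_{z,V}$ with $z\in V$, the point $\gamma(t)(x)$ moves with speed $\abs{\theta}\,\operatorname{dist}(\gamma(t)(x),z+V^\perp)=\abs{\theta}\,\abs{P_V x-z}$. This is bounded by $\abs{\theta}(r+\abs{z})\le (r+1)\abs{(\theta z,\theta)}=(r+1)\abs{\eta(\iota)}$.
\end{enumerate}

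In each case the velocity bound is uniform in $t$. Note that a full turn $\abs{\theta}>2\pi$ causes no trouble: we do not need $F$ to be injective, only an upper bound via the area formula.

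Next, the tangential derivatives satisfy $D_xF(t,\cdot)=D(\gamma(t))|_{T_xE}$, which is an isometry onto its image. Hence the $n$-dimensional Jacobian of $F$ on the rectifiable set $[0,1]\times E_i$ is at most $\abs{\partial_tF}\cdot 1\lesssim_r\abs{\eta(\iota)}$. This uses Hadamard's inequality: the volume spanned by $\partial_tF$ together with an orthonormal image frame of $T_xE$ is at most the product of their lengths.

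The area formula (or simply the bound $\HH^n(F(A))\le\int_A J F$ for Lipschitz maps on rectifiable sets) then gives
\begin{align*}
  \abs{F([0,1]\times E_i)}\le\int_0^1\int_{E_i}JF\,\dd\HH^{n-1}\,\dd t\lesssim_r\abs{\eta(\iota)}\,\HH^{n-1}(E_i).
\end{align*}
Summing over $i$ gives the claim for the regular part.

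The main subtlety is the null set $E_0$, since $F([0,1]\times E_0)$ must also be null. This follows because $F$ is Lipschitz on $[0,1]\times B(0,r)$, with constant depending on $r$ and $\iota$, and $\HH^n([0,1]\times E_0)\lesssim\HH^{n-1}(E_0)=0$ by the product inequality for Hausdorff measures. Alternatively, the remark allows us to discard $E_0$ altogether.

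I expect the care to go into justifying the area-formula inequality on $[0,1]\times E_i$ when $E_i$ is only $C^1$ and possibly noncompact. To handle this, I would exhaust each $E_i$ by compact pieces that are graphs of $C^1$ functions, apply the classical area formula in these local coordinates, and pass to the limit by monotone convergence.
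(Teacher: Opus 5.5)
Your proof is correct and takes essentially the same route as the paper: both apply the area formula to the sweeping map $(s,t)\mapsto \gamma(t)(x(s))$ and bound the Jacobian by the speed of the motion, using $\abs{\eta(\tau_z)}=\abs{z}$ and $\abs{\theta}(r+\abs{z})\lesssim_r \abs{\eta(\RR^{\theta}_{z,V})}$. The paper first computes the Jacobian exactly as the normal component of the velocity (\autoref{lem:bettermeas_isom}, which it reuses for \autoref{lem:smallmeasisom}) and only then bounds it crudely, whereas you go straight to the speed bound via Hadamard's inequality; you also handle the null part $E_0$ explicitly, which the paper simply discards without comment.
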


\begin{lemma}[small measure estimate for simple isometry]\label{lem:smallmeasisom}
  Let $V \in \GG^2(\R^n)$ and $\iota \in \Isom(V)$ be a simple isometry.
  Suppose $E \subset B(0,r)$ is a bounded $(n-1)$-rectifiable set in $\R^n$, and $\delta >0$ be sufficiently small (depending on $r$).
  Let $\aA_{\iota} \in \GG^{n-2}(\P^n)$ be the projective axis of $\iota$, $n_x \in \AA^1(\R^n)$ be the normal line at $x \in E$ and $\nu_x=[n_x:1]$ be the corresponding projective normal line.
  Denoting 
  \begin{align*}
    E^{\iota, \delta} \coloneq \{x \in E \colon \nu_x \cap B_{\P^n}(\aA_{\iota},\delta) \neq \emptyset\} ,
  \end{align*}
  then we have
  \begin{align}\label{eq:thmr3.1.3}
    \abs{\bigcup_{p \in L^{\iota}} p(E^{\iota, \delta})} 
     \lesssim_r \delta \abs{\eta(\iota)} \HH^{n-1}(E^{\iota, \delta}).
  \end{align}
\end{lemma}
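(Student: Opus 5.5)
The plan is to compute the swept volume with the area formula and bound the Jacobian pointwise by $\delta\abs{\eta(\iota)}$. By \autoref{rmk:recthypersurface} I may assume $E^{\iota,\delta}$ is contained in the regular part $\wt{E}$, a countable disjoint union of $C^1$ hypersurfaces with unit normal $\wh{N}$. Write $\iota_t$ for the point of $L^{\iota}$ at time $t$, so $\iota_t=\RR^{t\theta}_{\aA_\iota}$. Consider the map
\begin{align*}
  F\colon E^{\iota,\delta}\times[0,1]\to\R^n,\quad (x,t)\mapsto \iota_t(x),
\end{align*}
whose image is exactly $\bigcup_{p\in L^{\iota}}p(E^{\iota,\delta})$.

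Since each $\iota_t$ is an isometry, the Jacobian of $F$ at $(x,t)$ equals the absolute value of the normal component of the velocity $\partial_t\iota_t(x)$ with respect to the moved surface $\iota_t(E)$. Pulling this back by $\iota_t$ gives $J F(x,t)=\abs{\inner{v(x),\wh{N}(x)}}$, where $v$ is the velocity field of the one-parameter group at time $0$. This quantity is independent of $t$, because $\iota_t$ commutes with the group it generates. The area formula then gives
\begin{align*}
  \abs{\bigcup_{p\in L^{\iota}}p(E^{\iota,\delta})}\le\int_{E^{\iota,\delta}}\abs{\inner{v(x),\wh{N}(x)}}\dd\HH^{n-1}(x),
\end{align*}
so it suffices to prove $\abs{\inner{v(x),\wh{N}(x)}}\lesssim_r\delta\abs{\eta(\iota)}$ for every $x\in E^{\iota,\delta}$.

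To get this bound, I express $v$ through the $V\times\R$-parametrization. Let $J=\RR^{\pi/2}_{0,V}$ acting on $V$ and write $\eta(\iota)=(\eta_V,\eta_{n+1})$. For $\iota=\RR^{\theta}_{z,V}$ the velocity is $\theta J P_V(x-z)$. For $\iota=\tau_z$ it is $z=-J(Jz)$. In both cases
\begin{align*}
  v(x)=J\bigl(\eta_{n+1}P_Vx-\eta_V\bigr),
\end{align*}
which is linear in $\eta(\iota)$. Hence $\inner{v(x),\wh{N}(x)}=\inner{\eta(\iota),w(x)}_{V\times\R}$, where
\begin{align*}
  w(x)=\bigl(J P_V\wh{N}(x),\ \inner{JP_Vx,\wh{N}(x)}\bigr)\in V\times\R
\end{align*}
(up to sign conventions), and $\abs{w(x)}\lesssim_r 1$ because $\abs{x}<r$. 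This vector vanishes exactly when the normal line is orthogonal to $V$. In that degenerate case every simple isometry of $V$ has zero normal velocity and the bound is trivial.

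The geometric content is that $\inner{\eta(\iota),w(x)}=0$ holds iff $\cC_\iota=[\eta(\iota)]$ lies on the projective line $[w(x)]^{\perp}\cap\P^2_{V\times\R}$. I would check that this happens iff $\nu_x\cap\aA_\iota\neq\emptyset$, i.e.\ iff the normal line meets the rotation axis (rotation case) or $\wh{N}(x)\perp z$ (translation case). Writing $\abs{\inner{\eta,w}}=\abs{\eta}\abs{w}\abs{\cos\angle(\eta,w)}$ gives
\begin{align*}
  \abs{\inner{v(x),\wh N(x)}}\lesssim_r\abs{\eta(\iota)}\, d_{\P^2}\bigl(\cC_\iota,[w(x)]^{\perp}\bigr).
\end{align*}
The main obstacle is the quantitative comparison: if $\nu_x$ meets $B_{\P^n}(\aA_\iota,\delta)$, then $d_{\P^2}(\cC_\iota,[w(x)]^\perp)\lesssim_r\delta$. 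I would prove it in three steps.
\begin{enumerate}
  \item Use $\aA_\iota=\aA(\cC_\iota)$ together with \autoref{def:center_axis_corrspd} to reduce to the statement that $\nu_x$ meets $\aA(\cC)$ iff $\cC\in\ell_x:=[w(x)]^\perp$; equivalently, the hyperplane $\aA(\ell_x)$ contains $\nu_x$.
  \item Show that the conditions "$\nu_x$ within $\delta$ of $\aA(\cC)$" and "$\cC$ within $\delta$ of $\ell_x$" are comparable, with constants uniform over the compact family $\{(x,\wh N): \abs{x}\le r,\ \wh N\in\S^{n-1}\}$. I expect this to follow from the smoothness of the incidence relation in the Grassmannians together with a transversality check.
  \item Handle the degenerate configurations, where $w(x)$ is small, separately, since there the Jacobian bound $\abs{\eta}\abs{w}$ is already small. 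Taking $\delta$ small depending on $r$ absorbs the constants.
\end{enumerate}

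Integrating the pointwise bound over $E^{\iota,\delta}$ then yields \eqref{eq:thmr3.1.3}.
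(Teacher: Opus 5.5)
Your first step, reducing to a pointwise bound on the $t$-independent normal velocity $\abs{\inner{v(x),\wh N(x)}}$ via the area formula, is exactly the paper's \autoref{lem:bettermeas_isom}. After that you take a genuinely different route, and your identity is correct:
\[
\inner{v(x),\wh N(x)}=\inner{\eta(\iota),w(x)},\qquad w(x)=\bigl(JP_V\wh N(x),\,\inner{JP_Vx,\wh N(x)}\bigr)=(P_Vx,1)\times(P_V\wh N(x),0),
\]
where the cross product is taken in the $3$-space $V\times\R$. It handles translations and rotations at once. The paper instead treats $\tau_z$ and $\RR^\theta_{z,V}$ separately, and splits $B_{\P^n}(\aA_\iota,\delta)$ into a part near infinity, where it bounds $\abs{P_V\wh N(x)}$, and a bounded affine part, where it bounds $d(n_x,z+V^\perp)$.

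The gap is in the step you call the main obstacle. The comparison $d_{\P^2}(\cC_\iota,[w(x)]^\perp)\lesssim_r\delta$ is false, and your steps 2 and 3 leave the regime $\delta\ll\abs{P_V\wh N(x)}\ll1$ uncovered. Here is a counterexample.
\begin{itemize}
\item Take $n=3$, $V=\spn\{e_1,e_2\}$ and $\iota=\RR^\theta_{e_1,V}$. Then $\aA_\iota$ is the projectivization of $\spn\{e_1+e_4,e_3\}\subset\R^4$, and $c=\eta(\iota)/\abs{\eta(\iota)}=(e_1+e_4)/\sqrt2$.
\item Take $x=0$ and $\wh N=\cos\kappa\,e_3+\sin\kappa(\cos\psi\,e_1+\sin\psi\,e_2)$ with $\sin\kappa=\sin\psi=\sqrt{\delta/2}$.
\item The vector $y=\sin\kappa\cos\psi\,e_4+\wh N$ represents a point of $\nu_x$. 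Its component orthogonal to $\spn\{e_1+e_4,e_3\}$ is $\sin\kappa\sin\psi\,e_2$, and $\abs{y}\ge1$. So $\nu_x$ comes within angle $\arcsin(\delta/2)<\delta$ of $\aA_\iota$, i.e.\ $x\in E^{\iota,\delta}$.
\item Yet $\abs{w(x)}=\sin\kappa=\sqrt{\delta/2}$ and $\sin d_{\P^2}(\cC_\iota,[w(x)]^\perp)=\sin\psi/\sqrt2=\sqrt\delta/2$. Only their product $\abs{\inner{c,w(x)}}=\delta/(2\sqrt2)$ is $O(\delta)$.
\end{itemize}
Varying $\psi$ shows that any comparability constant in your step 2 must blow up like $1/\abs{P_V\wh N(x)}$, so no uniform compactness constant exists. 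Step 3 only gives $\abs{\eta}\abs{w}\approx\sqrt\delta\,\abs{\eta}$ here. The needed factor is lost exactly where you replace $\abs{w(x)}$ by its bound $\lesssim_r1$.

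The same example has $\abs{P_V\wh N(x)}=d(n_x,e_1+V^\perp)=\sqrt{\delta/2}$ and $x\notin E_4$. So the paper's $E_3$/$E_4$ dichotomy, which bounds one factor on each piece, also cannot hold with a constant independent of $\delta$: its $c_r=\sup_{\abs{u}\le D_r}c_u$ depends on $\delta$ through $D_r$.

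Your setup does give the right, weighted bound directly. Let $f_1,\dots,f_{n-2}$ be an orthonormal basis of $V^\perp\times\{0\}$. Column operations with the $f_i$ reduce the determinant to the triple product in $V\times\R$, so
\[
\inner{c,w(x)}=\pm\det\bigl[(x,1),(\wh N(x),0),c,f_1,\dots,f_{n-2}\bigr].
\]
The vectors $c,f_1,\dots,f_{n-2}$ are orthonormal and span the $(n-1)$-plane whose projectivization is $\aA_\iota$. Hence the determinant equals the area spanned by $(x,1)$ and $(\wh N(x),0)$, which is at most $\sqrt{1+\abs{x}^2}<\sqrt{1+r^2}$, times $\sin\theta_1\sin\theta_2$. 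Here $\theta_1\le\theta_2$ are the principal angles between the $2$-plane of $\nu_x$ and that $(n-1)$-plane. For the angular metric on $\P^n$, $\theta_1<\delta$ whenever $\nu_x$ meets $B_{\P^n}(\aA_\iota,\delta)$. Therefore
\[
\abs{\inner{v(x),\wh N(x)}}\le\sqrt{1+r^2}\,\delta\,\abs{\eta(\iota)}\quad\text{on all of }E^{\iota,\delta},
\]
with no smallness assumption on $\delta$ and no degenerate cases. Integrating over $E^{\iota,\delta}$ then proves the lemma.
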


\begin{remark}\label{rmk:zero_isom_P_V^perp}
  Let $N \colon E \to \P^{n-1}$ be the normal map of $E$. Notice that 
  \begin{align*}
    \P^{n-3}_{V^{\perp}\times\{0\}} = \bigcap_{\iota \in \Isom(V)} \bigcap_{\delta>0} B_{\P^n}(\aA_{\iota},\delta)
    \implies
    N^{-1}(\P^{n-3}_{V^{\perp}}) = \bigcap_{\iota \in \Isom(V)} \bigcap_{\delta>0} E^{\iota,\delta},
  \end{align*}
  where we identify $\P^{n-3}_{V^{\perp}\times\{0\}} \subset \P^{n-1}_{\infty}$ with the direction set $\P^{n-3}_{V^{\perp}} \subset \P^{n-1}$.
  Thus, applying \autoref{lem:smallmeasisom} and letting $\delta \to 0$ yields
  \begin{align*}
    \abs{\bigcup_{p \in L^{\iota}} p(N^{-1}(\P^{n-3}_{V^{\perp}}))} = 0, \quad \forall \iota \in \Isom(V).
  \end{align*}
\end{remark}

Finally, we give a simple lemma which essentially shows that small perturbation of a path of isometries can only cause small change to the measure of the covered set.
This result is the higher-dimensional counterpart of \cite[Lemma 2.1]{ChCs2019}; since the proof is analogous, we omit it here.

\begin{lemma}[small neighborhood lemma for isometries]\label{lem:smallneighborhood}
  Suppose $V \in \GG^2(\R^n)$, $E \subset \R^n$ is a compact set, and $P \subset \Isom(V)$ is a path of isometries. 
  For any $\epsilon >0$, there exist $\delta >0$ and a neighborhood $B_{\Isom(V)}(P,\delta)$ of path $P$ in $\Isom(V)$, 
  such that for all path $P' \subset B_{\Isom(V)}(P,\delta)$, 
  we have 
  \begin{align*}
    \abs{\bigcup_{p \in P'}p(E)} \le \abs{\bigcup_{p \in P}p(E)} + \epsilon.
  \end{align*}
\end{lemma}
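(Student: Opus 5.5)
The plan is a compactness argument combined with outer regularity of Lebesgue measure, in the spirit of \cite[Lemma 2.1]{ChCs2019}. First I would fix a metric on $\Isom(V)$. The most convenient choice is to view each isometry as an affine map $x \mapsto Ax + b$ and use $d(p,p') = \|A-A'\| + |b-b'|$. One could equivalently use the $V\times\R$-parametrization $\eta$, but the affine-map metric makes the continuity estimates immediate. Since $P=\gamma([0,1])$ is the continuous image of a compact interval, $P$ is compact. The evaluation map $\Phi\colon \Isom(V)\times\R^n\to\R^n$, $(p,x)\mapsto p(x)$, is continuous, so $K\coloneq\bigcup_{p\in P}p(E)=\Phi(P\times E)$ is compact, in particular measurable.

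Next I would use outer regularity of Lebesgue measure. Choose an open set $U\supset K$ with $\abs{U}\le\abs{K}+\epsilon$. Since $K$ is compact and $U$ is open, $\eta_0\coloneq \operatorname{dist}(K,\R^n\setminus U)>0$; if $U=\R^n$, any $\eta_0$ works.

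The step needing care is to make the perturbation uniform in both $p\in P$ and $x\in E$. For this I would use the explicit estimate
\begin{align*}
  \abs{p'(x)-p(x)} \le \|A'-A\|\,\abs{x} + \abs{b'-b} \le (1+R)\, d(p,p'),
\end{align*}
where $R=\sup_{x\in E}\abs{x}<\infty$ because $E$ is compact. (Alternatively, one can argue via uniform continuity of $\Phi$ on the compact set $\widebar{B}_{\Isom(V)}(P,1)\times E$.) Set $\delta\coloneq\min\{1,\eta_0/(2(1+R))\}$.

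Then for any path $P'\subset B_{\Isom(V)}(P,\delta)$, every $p'\in P'$ has some $p\in P$ with $d(p,p')<\delta$. Hence every point $p'(x)$, $x\in E$, lies within distance $\eta_0/2$ of $p(x)\in K$, so $p'(x)\in U$. Therefore $\bigcup_{p'\in P'}p'(E)\subset U$. Since $P'$ is also compact, this union is compact and thus measurable (in any case the outer measure bound suffices), and its measure is at most $\abs{U}\le\abs{\bigcup_{p\in P}p(E)}+\epsilon$, as required. The only genuine obstacle is the uniformity in the previous step, and compactness of $E$ together with the linear-in-$\abs{x}$ estimate handles it. Nothing specific to $V$ or to rectifiability is used.
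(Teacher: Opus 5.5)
Your argument is correct and is essentially the proof the paper has in mind. The paper omits it, citing the analogous \cite[Lemma 2.1]{ChCs2019}, whose argument is the same: the swept set $\bigcup_{p\in P}p(E)$ is compact, so it has an open neighbourhood of measure at most $\abs{\bigcup_{p\in P}p(E)}+\epsilon$, and compactness of $E$ gives a uniform $\delta$ forcing every $p'(E)$, $p'\in P'$, into that neighbourhood. One minor caveat: the paper treats rotations as motions ($\RR^{\theta}_{z,V}\neq\RR^{\theta+2\pi}_{z,V}$), so its natural topology on $\Isom(V)$ is the one transported from $V\times\R$ by $\eta$, for which your affine-map distance is only a pseudometric; this is harmless, since the translation part $\theta^{-1}(w-\RR^{\theta}_{0,V}w)$ of $\eta^{-1}(w,\theta)$ tends to $-\RR^{\pi/2}_{0,V}w$ as $\theta\to0$, so $\eta^{-1}$ is continuous into affine maps, hence uniformly continuous near the compact set $\eta(P)$, and $\eta$-neighbourhoods work as well.
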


Next we prove \autoref{lem:measisom} and \autoref{lem:smallmeasisom}.
First we introduce a basic lemma that reflects the geometry nature of moving an $(n-1)$-rectifiable set along a simple isometry.
This result depends on two structures: the normal information of $E$ (i.e., the $\HH^{n-1}$-a.e. well-defined normal directions and normal lines),
and the geometric information of the simple isometry (i.e., the direction of the translation or the axis of the simple rotation). 

\begin{lemma}[basic measure estimate for moving $(n-1)$-rectifiable set]\label{lem:bettermeas_isom}
  Let $V \in \GG^2(\R^n)$, $z \in V$ and $\theta \in \R$.
  Let $\tau_z$ be a translation with direction $\wh{z} = \frac{z}{\abs{z}} \in \S^{n-1}$, and $\RR^\theta_{z,V}$ be a simple rotation with axis $z+V^{\perp}$.
  For $E \in \Rect$ and $x \in E$, let $\wh{N}(x) \in \S^{n-1}$ and $n_x \in \AA^1(\R^n)$ be the classical normal direction and normal line to $E$ at $x$, respectively.
  Then the measure of the set covered by moving $E$ along simple isometries can be estimated as
  \begin{numcases} {\abs{\bigcup_{p \in L^{\iota}} p(E)} \le}
    \abs{z} \int_{E} \abs{\inner{\wh{z}, \wh{N}(x)}}  \dd \HH^{n-1}(x), & if $\iota = \tau_z$, \label{eq:l2.0.1} \\
    \abs{\theta} \int_{E} \abs{P_V(\wh{N}(x))}\cdot d_{\R^n}(n_x, z+V^\perp)  \dd \HH^{n-1}(x), & if $\iota = \RR^{\theta}_{z,V}$. \label{eq:l2.0.2}
  \end{numcases}

\end{lemma}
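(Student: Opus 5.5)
The plan is to reduce to the regular part and estimate the swept set with the area formula. By \autoref{rmk:recthypersurface}, dropping an $\HH^{n-1}$-null set $E_0$ leaves the swept set unchanged up to a Lebesgue null set. For translations this holds because $\bigcup_{t}(E_0+t z)$ is the image of $E_0\times[0,1]$ under a Lipschitz map into $\R^n$, so it has $\HH^n$-measure zero. The same argument works for rotations. So we may assume $E=\bigsqcup_i E_i$ with each $E_i$ an embedded $C^1$ hypersurface. By countable subadditivity of both sides it suffices to treat a single $C^1$ piece $E_i$.

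For a piece $E_i$, consider the map $\Phi\colon E_i\times[0,1]\to\R^n$, $\Phi(x,t)=\gamma(t)(x)$, where $\gamma(t)$ is the natural path of $\iota$. The covered set is $\Phi(E_i\times[0,1])$, and $\Phi$ is $C^1$ (locally Lipschitz) on an $n$-dimensional rectifiable domain. The area formula then gives
\begin{align*}
\abs{\Phi(E_i\times[0,1])}\le \int_0^1\int_{E_i} J\Phi(x,t)\dd\HH^{n-1}(x)\dd t ,
\end{align*}
where $J\Phi$ is the $n$-dimensional Jacobian. Since $\gamma(t)$ is an isometry, $d_x\Phi$ maps $T_xE_i$ isometrically onto the tangent space of $\gamma(t)(E_i)$ at $\Phi(x,t)$. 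Therefore $J\Phi(x,t)$ equals the absolute value of the component of $\partial_t\Phi(x,t)$ normal to $\gamma(t)(E_i)$ at $\Phi(x,t)$.

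For $\iota=\tau_z$ we have $\partial_t\Phi=z$ and the normal stays $\wh N(x)$, so $J\Phi=\abs{\inner{z,\wh N(x)}}=\abs{z}\abs{\inner{\wh z,\wh N(x)}}$, independent of $t$. This gives \eqref{eq:l2.0.1}.

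For $\iota=\RR^\theta_{z,V}$, write $\gamma(t)=\RR^{t\theta}_{z,V}$ and let $J$ denote the infinitesimal rotation $P^{\Tt}\mat{0&-1\\1&0}P$, which is skew-symmetric and acts only on $V$. Then $\partial_t\Phi=\theta\, J\,(\gamma(t)x-z)$, and the normal at $\gamma(t)x$ is $A_t\wh N(x)$, where $A_t$ is the linear part of $\gamma(t)$. The matrix $A_t$ commutes with $J$ and preserves $\inner{\cdot,\cdot}$, so
\begin{align*}
\inner{\partial_t\Phi, A_t\wh N(x)}=\theta\inner{J A_t(x-z),A_t\wh N(x)}=\theta\inner{J(x-z),\wh N(x)}.
\end{align*}
The integrand is therefore again independent of $t$, and it remains to show
\begin{align*}
\abs{\inner{J(x-z),\wh N(x)}}\le\abs{P_V\wh N(x)}\, d(n_x,z+V^\perp).
\end{align*}

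This last inequality is the main step. First, $\inner{J(x-z),\wh N}=-\inner{x-z,J\wh N}$, and $J\wh N=JP_V\wh N$ is a vector in $V$ of length $\abs{P_V\wh N}$. Second, the quantity $\inner{x-z,J\wh N}$ is unchanged if $x$ is replaced by any point $x+s\wh N$ of $n_x$, because $\inner{\wh N,J\wh N}=0$. It is also unchanged if $z$ is replaced by any point $z+w$ with $w\in V^\perp$, because $J\wh N\in V$. Choosing the pair of points on $n_x$ and on $z+V^\perp$ that realizes the distance $d(n_x,z+V^\perp)$, Cauchy--Schwarz bounds the quantity by $\abs{J\wh N}\,d(n_x,z+V^\perp)$. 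If the lines are parallel, the distance is attained or approximated by such pairs, and the same bound follows.

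Summing over the pieces $E_i$ yields \eqref{eq:l2.0.2}. The only care needed is measurability and the use of the area formula for Lipschitz maps on a rectifiable product, which is standard.
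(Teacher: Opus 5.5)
Your proof is correct and takes essentially the same route as the paper. Both arguments apply the area formula to the sweep map on each $C^1$ piece and identify the Jacobian with the normal component of the velocity; the paper does this by factoring the determinant-one rotation out of $\mathrm{D}\Phi_2$ in a local parametrization. Both then conclude with the bound $\abs{\inner{J(x-z),\wh N(x)}}\le\abs{P_V\wh N(x)}\,d_{\R^n}(n_x,z+V^\perp)$. Your invariance-plus-Cauchy--Schwarz proof of that bound holds for every pair of points, so you can simply take the infimum and the parallel-line caveat is unnecessary. That argument, and your explicit handling of the null part $E_0$, are slightly cleaner presentations rather than a different method.
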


\begin{proof}
  Let $\iota_1 = \tau_z$ and $\iota_2 = \RR^\theta_{z,V}$.
  Without loss of generality, we can assume that $\theta \in [0, 2\pi]$ and only consider the regular part $\wt{E}=\bigsqcup_{i=1}^{\infty} E_i$.
  We can further assume that each $E_i$ is parameterized by 
  $$x: U_i \subset \R^{n-1} \to E_i,\  s \mapsto x(s).$$
  Fix $i \ge 1$ and define 
  \begin{align*}
    \Phi_1 &\colon U_i\times [0,1] \to \R^n,\  (s,t) \mapsto \tau_{tz}(x(s)),  \\
    \Phi_2 &\colon U_i\times [0,1] \to \R^n,\  (s,t) \mapsto \RR^{t\theta}_{z,V}(x(s)), 
  \end{align*}
  which are Lipschitz mappings from $U_i\times [0,1]$ to $\R^n$.
  From the area formula (see, for example, \cite[Theorem 3.2.3]{Fed1969}), for $j= 1,2$, we have
  \begin{align}\label{eq:l2.1}
    \int_{\R^n} \mathrm{N}(\Phi_j,y) \dd \HH^{n}(y) = \int_{U_i\times [0,1]} \mathrm{J}_n \Phi_j(s,t) \dd (s,t),  
  \end{align}
  where $\mathrm{N}(\Phi,y) = \# \{(s,t) \in U_i\times [0,1] \colon \Phi(s,t)=y \}$ is the multiplicity function, 
  and the left-hand side corresponds to the swept-out measure of moving $E_i$ along $\iota_j$, counting multiplicity.
  The Jacobian appeared on the right-hand side is 
  \begin{align}\label{eq:l2.2}
    \mathrm{J}_n \Phi_j(s,t) = \abs{\det((\mathrm{D}\Phi_j)^{\Tt}_{(s,t)} \mathrm{D}\Phi_{(s,t)})}^{\frac{1}{2}} =\abs{\det((\mathrm{D}\Phi_j)_{(s,t)})},
  \end{align}
  and the linear mapping $(\mathrm{D}\Phi_j)_{(s,t)}$ is the derivative of $\Phi_j$ at $(s,t)$.

  Next we need to calculate the term $\abs{\det((\mathrm{D}\Phi_j)_{(s,t)})}$. For $(\tilde{s},\tilde{t}) \in \R^{n-1} \times \R$, from 
  \begin{align*}
    \Phi_1(s,t) &= \tau_{tz}(x(s)) = x(s)+tz, \\
    \Phi_2(s,t) &=\RR^{t\theta}_{z,V} (x(s)) =z+ \RR^{t\theta}_{0,V}(x(s)-z),
  \end{align*} 
  simple computation yields that
  \begin{align*}
    (\mathrm{D}\Phi_1)_{(s,t)} (\tilde{s},\tilde{t})
    &= \frac{\partial \Phi_1}{\partial s}\tilde{s} +\frac{\partial \Phi_1}{\partial t}\tilde{t} 
    = \frac{\partial x}{\partial s} \tilde{s} + z \tilde{t} 
    \eqcolon L_1 (\tilde{s},\tilde{t}), \\
    (\mathrm{D}\Phi_2)_{(s,t)} (\tilde{s},\tilde{t})
    &= \frac{\partial \Phi_2}{\partial s}\tilde{s} +\frac{\partial \Phi_2}{\partial t}\tilde{t} 
    = \RR^{t\theta}_{0,V} \Bg{\frac{\partial x}{\partial s} \tilde{s} + \theta J \bg{x(s)-z}\tilde{t}} 
    \eqcolon \RR^{t\theta}_{0,V} \circ L_2 (\tilde{s},\tilde{t}), 
 \end{align*}
 where $J=\RR^{\frac{\pi}{2}}_{0,V} \circ P_V $,
 $L_1 (\tilde{s},\tilde{t})
    = \begin{pmatrix} \frac{\partial x}{\partial s} &  z \end{pmatrix} 
      \begin{pmatrix}
        \tilde{s}\\ \tilde{t}
      \end{pmatrix}$, 
  and
  $L_2(\tilde{s},\tilde{t}) 
    = \begin{pmatrix} \frac{\partial x}{\partial s} &  \theta J\bg{x(s)-z} \end{pmatrix} 
      \begin{pmatrix}
        \tilde{s}\\ \tilde{t}
      \end{pmatrix}$.
Combining these with \eqref{eq:l2.2}, and notice that $\RR^{t\theta}_{0,V}$ corresponds to a matrix with determinant 1, we have
\begin{numcases} {\mathrm{J}_n \Phi_j(s,t) = \abs{\det((\mathrm{D}\Phi_j)_{(s,t)})} = \abs{\det(L_j)}= }
    \abs{\det \begin{pmatrix} \frac{\partial x}{\partial s} &  z \end{pmatrix} }, & if $j=1$, \label{eq:l2.3.1} \\
    \abs{\det \begin{pmatrix} \frac{\partial x}{\partial s} &  \theta J\bg{x(s)-z} \end{pmatrix} }, & if $j=2$. \label{eq:l2.3.2}
\end{numcases}

 Let $\{u_1,\dots,u_{n-1}\}$ be an orthonormal basis of the tangent hyperplane of $E_i$ at $x(s)$, and let $u_n$ be a unit vector parallel to $\wh{N}(x(s))$; these vectors form an orthonormal basis of $\R^n$.
 From the fact that the column vectors of $\frac{\partial x}{\partial s}$ generate the tangent hyperplane of $E_i$ at $x(s)$,
 we can find an $(n-1)\times(n-1)$ invertible matrix $A(s)$ such that
  \begin{equation*}
    \frac{\partial x}{\partial s} 
    = \begin{pmatrix}
        u_1& \cdots & u_{n-1}      
    \end{pmatrix} 
    A(s),
  \end{equation*}
  which also implies
   \begin{equation*}
   \abs{\det A(s)} = \sqrt{A(s)^\Tt A(s)} = \sqrt{\Bg{\frac{\partial x}{\partial s}}^\Tt \frac{\partial x}{\partial s} } = \mathrm{J}_{n-1} x(s),
  \end{equation*}
  where $\mathrm{J}_{n-1} x(s)$ is the $(n-1)$-Jacobian of the parametrization $x$.
  Next, since $u_1,\ \cdots,\ u_{n-1},\ u_n$ form an orthonormal basis of $\R^n$, there exist $a = (a_1, \dots, a_{n-1}),\ b = (b_1, \dots, b_{n-1}) \in \R^{n-1}$ such that
  \begin{align*}
     z &= \sum_{i=1}^{n-1} a_iu_i +\inner{z, u_n}u_n, \\
     J\bg{x(s)-z} &= \sum_{i=1}^{n-1} b_iu_i +\inner{J\bg{x(s)-z}, u_n}u_n.
  \end{align*}
  With these equations, \eqref{eq:l2.3.1} and \eqref{eq:l2.3.2} can be simplified as
  \begin{align}
    \mathrm{J}_n \Phi_1(s,t)
    &= \abs{\det
      \begin{pmatrix} A(s)  &  a^{\Tt}  \\ 
                    0_{n-1} & \inner{z, u_n}\end{pmatrix}} 
    = \abs{\inner{ z, u_n }} \cdot \mathrm{J}_{n-1} x(s), \label{eq:l2.4.1} \\
    \mathrm{J}_n \Phi_2(s,t)
    &= \abs{\det
      \begin{pmatrix} A(s)  &  \theta b^{\Tt}  \\ 
                    0_{n-1} & \theta \inner{J\bg{x(s)-z}, u_n}\end{pmatrix}}
    =  \theta \abs{\inner{ J\bg{x(s)-z}, u_n }} \cdot \mathrm{J}_{n-1} x(s). \label{eq:l2.4.2}
  \end{align}
  Finally, applying \eqref{eq:l2.4.1} to \eqref{eq:l2.1}, the measure (counting by multiplicity) of the set covered by translating is
  \begin{align} 
    \int_{\R^n} \mathrm{N}(\Phi_1,y) \dd \HH^{n}(y)   
    &= \int_{U_i\times [0,1]} \abs{\inner{ z, \wh{N}(x(s)) }} \cdot \mathrm{J}_{n-1} x(s) \dd (s,t) \notag \\
    &= \int_{U_i} \abs{\inner{ z, \wh{N}(x(s)) }} \cdot \mathrm{J}_{n-1} x(s) \dd s 
    = \int_{E_i} \abs{\inner{ z, \wh{N}(x) }}  \dd \HH^{n-1}(x),  \label{eq:l2.5.1}
  \end{align}
  where the last equality uses the area formula for the parametrization $x$.
  Similarly, applying \eqref{eq:l2.4.2} to \eqref{eq:l2.1} leads to
  \begin{align} \label{eq:l2.5.2}
   \int_{\R^n} \mathrm{N}(\Phi_2,y) \dd \HH^{n}(y)  = \theta \int_{E_i} \abs{\inner{ J\bg{x(s)-z}, u_n }} \dd \HH^{n-1}(x).
  \end{align}
  Notice that for the translation $\iota_1 = \tau_z$ with direction $\wh{z} = \frac{z}{\abs{z}} \in \S^{n-1}$, we have 
  \begin{align} \label{eq:l2.5.3}
    \abs{\inner{z, \wh{N}(x)}} = \abs{z}\abs{\inner{\wh{z}, \wh{N}(x)}}.
  \end{align}
  For the rotation $\iota_2 =\RR^\theta_{z,V}$, recall that $J=\RR^{\frac{\pi}{2}}_{0,V} \circ P_V $, we have
  \begin{align} 
    \abs{\inner{J(x-z), \wh{N}(x)}} 
    &= \abs{\inner{\RR^{\frac{\pi}{2}}_{0,V} \circ P_V(x-z), P_V(\wh{N}(x))}}  \notag \\
    &= \abs{P_V(\wh{N}(x))} \abs{P_V(x-z)} \abs{\sin \Bg{\angle\bg{P_V(x-z),P_V(\wh{N}(x))}}}  \notag \\
    &= \abs{P_V(\wh{N}(x))} d_{\R^n} (P_V(n_x), z+V^\perp) \le \abs{P_V(\wh{N}(x))}\cdot d_{\R^n} (n_x, z+V^\perp). \label{eq:l2.5.4}
  \end{align}
  Applying  \eqref{eq:l2.5.3} and \eqref{eq:l2.5.4} to \eqref{eq:l2.5.1} and \eqref{eq:l2.5.2}, respectively,
  and notice that $\abs{\bigcup_{p \in L^{\iota_j}} p(E_i)} \le \int_{\R^n} \mathrm{N}(\Phi_j,y) \dd \HH^{n}(y)$ holds for $j=1,2$, 
  we have 
  \begin{align*}
    \abs{\bigcup_{p \in L^{\iota}} p(E)}  
    \le \sum_{i=1}^{\infty} \abs{\bigcup_{p \in L^{\iota}} p(E_i)}  
    \le
      \begin{cases}
        \displaystyle \abs{z} \int_{E} \abs{\inner{\wh{z}, \wh{N}(x)}}  \dd \HH^{n-1}(x), & \text{if}\ \iota = \tau_z, \\
        \displaystyle \abs{\theta} \int_{E} \abs{P_V(\wh{N}(x))}\cdot d_{\R^n}(n_x, z+V^\perp)  \dd \HH^{n-1}(x), & \text{if}\ \iota = \RR^{\theta}_{z,V},
      \end{cases}
  \end{align*}
  which proves \eqref{eq:l2.0.1} and \eqref{eq:l2.0.2}.
\end{proof}

Now we turn to the proof of \autoref{lem:measisom} and \autoref{lem:smallmeasisom}.

\begin{proof}[\textbf{Proof of \autoref*{lem:measisom}}]
  For $\iota = \tau_z$ with $z \in V$, we have
  \begin{align}\label{eq:lengthcheck_trans}
     \eta(\tau_z) = (\RR_{0,V}^{\frac{\pi}{2}}(z),0) \ \implies \abs{\eta(\tau_z)}=\abs{z},
  \end{align}
  then we can use \eqref{eq:l2.0.1} in \autoref{lem:smallmeasisom} to get 
  \begin{align}\label{eq:lem_measisom1}
    \abs{\bigcup_{p \in L^{\iota}} p(E)} \le
    \abs{z} \int_{E} \abs{\inner{\wh{z}, \wh{N}(x)}}  \dd \HH^{n-1}(x) \le \abs{z} \HH^{n-1}(E) = \abs{\eta(\iota)} \HH^{n-1}(E).
  \end{align} 

  For $\iota = \RR^{\theta}_{z,V}$ with $z \in V$, 
  we have 
  \begin{align}\label{eq:lengthcheck_rotat}
    \eta(\RR^{\theta}_{z,V}) = (\theta z,\theta) \ \implies \abs{\eta(\RR^{\theta}_{z,V})} = \abs{\theta}\sqrt{1+\abs{z}^2} \approx \abs{\theta}(1+\abs{z}).
  \end{align}
  Then \eqref{eq:l2.0.2} in \autoref{lem:bettermeas_isom} leads to
  \begin{align}\label{eq:lem_measisom2}
    \abs{\bigcup_{p \in L^{\iota}} p(E)} \lesssim \abs{\theta} (r+\abs{z})\HH^{n-1}(E) \lesssim_r \abs{\theta}(1+\abs{z}) \HH^{n-1}(E) \lesssim_r \abs{\eta(\iota)} \HH^{n-1}(E).
  \end{align}
  
  Finally, the combination of \eqref{eq:lem_measisom1} and \eqref{eq:lem_measisom2} proves \eqref{eq:lem_measisom}.
\end{proof}

\begin{proof}[\textbf{Proof of \autoref*{lem:smallmeasisom}}]
  Let $\theta \in \R$ and $z \in V$. 
  To prove \eqref{eq:thmr3.1.3} in \autoref{lem:smallmeasisom}, we only need to prove
  \begin{numcases} {\abs{\bigcup_{p \in L^{\iota}} p(E^{\iota, \delta})} \le}
     C_n \delta \abs{z} \HH^{n-1}(E^{\iota, \delta})  , & if $\iota = \tau_z$, \label{eq:thmr3.1.2} \\
     C_{n, r} \delta \abs{\theta} (1 + \abs{z}) \HH^{n-1}(E^{\iota, \delta}) , & if $\iota = \RR^{\theta}_{z,V}$, \label{eq:thmr3.1.1}
  \end{numcases}
  where the constants $C_n, C_{n,r} >1$.
  Then, applying \eqref{eq:lengthcheck_trans} and \eqref{eq:lengthcheck_rotat} to \eqref{eq:thmr3.1.1} and \eqref{eq:thmr3.1.2}, respectively, leads to \eqref{eq:thmr3.1.3}.
  We now focus on the proof of \eqref{eq:thmr3.1.1} and \eqref{eq:thmr3.1.2}.

  Let $\wh{N}(x) \in \S^{n-1}$ be the classical normal direction at $x \in E$, and $N(x) = \pi_0(\wh{N}(x)) \in \P^{n-1}$.
  For simplicity, we use
  \begin{align*}
    B_{\P^n_{\aff}}(\aA_{\iota},\delta) \coloneq B_{\P^n}(\aA_{\iota},\delta) \cap \P^n_{\aff}, 
    \quad B_{\P^{n-1}_{\infty}}(\aA_{\iota},\delta) \coloneq B_{\P^n}(\aA_{\iota},\delta) \cap \P^{n-1}_{\infty}
  \end{align*}
  for the finite part and the part at infinity of $B_{\P^n}(\aA_{\iota},\delta)$, respectively.

  \textbf{$\bullet$ Case 1:}
  Suppose $\iota = \tau_z$ with $z \in \R^n\setminus\{0\}$, then $\aA_{\iota} = [z^\perp :0] \subset \P^{n-1}_{\infty}$,
  i.e., the projective axis is totally contained in the part at infinity of $\P^n$.
  Let $\wh{z} = \frac{z}{\abs{z}} \in \S^{n-1}$, and $\tilde{z} = [z] \in \P^{n-1}$ be the projective direction of $z \in \R^n\setminus \{0\}$.

  To prove \eqref{eq:thmr3.1.2}, we claim 
  \begin{align}\label{eq:r3.1.1}
    E^{\iota,\delta} \subset N^{-1}(B_{\P^{n-1}}(\tilde{z}^\perp, C_n\delta)),
  \end{align} 
  for sufficiently small $\delta>0$ (depending on $r$),
  where $N \colon E \to \P^{n-1}$ is the normal map of $E$, and $C_n>1$ is a constant.
  Thus, for $x \in E^{\iota,\delta}$ with classical normal direction $\wh{N}(x) \in \S^{n-1}$, 
  \eqref{eq:r3.1.1} implies 
  \begin{align*}
    \abs{\inner{\wh{z}, \wh{N}(x)}} = \abs{\cos \angle(\wh{z}, \wh{N}(x))} =\cos (d_{\P^{n-1}}(\wt{z},N(x))) \le \sin (C_n\delta) \lesssim C_n\delta.
  \end{align*}
  Then combining with \eqref{eq:l2.0.1} in \autoref{lem:bettermeas_isom}, we have
  \begin{align*}
    \abs{\bigcup_{p \in L^{\iota}} p(E^{\iota,\delta})}  
    \le \abs{z} \int_{E^{\iota,\delta}} \abs{\inner{\wh{z}, \wh{N}(x)}}  \dd \HH^{n-1}(x)
    \lesssim C_n \delta \abs{z} \HH^{n-1}(E^{\iota,\delta}), 
  \end{align*}
  which proves \eqref{eq:thmr3.1.2}.

  Now we prove the claim \eqref{eq:r3.1.1}.
  Let's divide $E^{\iota,\delta}$ as $E^{\iota,\delta} = E_1\cup E_2$, where
  \begin{align*}
    E_1 &\coloneq \{x \in E \colon \nu_x \cap B_{\P^n_{\aff}}([z^\perp :0],\delta) \neq \emptyset\},  \\
    E_2 &\coloneq \{x \in E \colon \nu_x \cap B_{\P^{n-1}_{\infty}}([z^\perp :0],\delta) \neq \emptyset\}.
  \end{align*}
  For the set $E_1$, noticing that $B_{\P^n_{\aff}}([z^\perp :0],\delta) = \{[y:1] \colon y\in A(z^\perp,\delta)\}$, 
  where $A(z^\perp,\delta) \subset \R^n$ is far away from the origin and its points have projective direction in  $B_{\P^{n-1}}(\tilde{z}^\perp,\delta)$. 
  Since $E \subset B(0,r)$, for sufficiently small $\delta$ (depending on $r$), we can make $A(z^\perp,\delta)$ far away from $E$, 
  such that 
  \begin{align*}
    \nu_x \cap B_{\P^n_{\aff}}([z^\perp :0],\delta) \neq \emptyset 
    \implies n_x \cap A(z^\perp,\delta) \neq \emptyset
    \implies N(x) \in B_{\P^{n-1}}(\tilde{z}^\perp, C_n\delta),
  \end{align*}
  where $C_n >1$ doesn't depend on $r$.
  Thus, for sufficiently small $\delta$, we have 
  \begin{align}\label{eq:r3.1.2}
    E_1 
    \subset \{x \in E \colon N(x) \in B_{\P^{n-1}}(	\tilde{z}^\perp, C_n\delta)\}
    = N^{-1}(B_{\P^{n-1}}(\tilde{z}^\perp, C_n\delta)).
  \end{align}
  For the set $E_2$, using the fact $B_{\P^{n-1}_{\infty}}([z^\perp :0],\delta) = \pi_0(B_{\S^{n-1}}(\wh{z}^\perp, \delta))$,
  we have 
  \begin{align*}
    \nu_x \cap B_{\P^{n-1}_{\infty}}([z^\perp :0],\delta) \neq \emptyset
    \iff \wh{N}(x) \in B_{\S^{n-1}}(\wh{z}^\perp,\delta)
    \iff N(x) \in B_{\P^{n-1}}(\tilde{z}^\perp, \delta),
  \end{align*}
  which easily leads to 
  \begin{align}\label{eq:r3.1.3}
    E_2 = N^{-1}(B_{\P^{n-1}}(\tilde{z}^\perp, \delta)) \subset N^{-1}(B_{\P^{n-1}}(\tilde{z}^\perp, C_n\delta)).
  \end{align}
  Then the combination of \eqref{eq:r3.1.2}, \eqref{eq:r3.1.3} directly leads to \eqref{eq:r3.1.1}.

  \textbf{$\bullet$ Case 2:}
  Suppose $\iota = \RR^{\theta}_{z,V}$, then the projective axis is $\aA_{\iota} = [z+V^\perp :1]$.
  To prove \eqref{eq:thmr3.1.1}, we claim
  \begin{align}\label{eq:r3.1}
    x \in E^{\iota,\delta}  \implies \abs{P_V(\wh{N}(x))} d_{\R^n}(P_V(n_x), z+V^\perp) \le C_{n,r} \delta (1+\abs{z})
  \end{align}
  for some constant $C_{n,r} >1$.
  Combining \eqref{eq:r3.1} with \eqref{eq:l2.0.2} in \autoref{lem:bettermeas_isom}, we have 
  \begin{align*}
    \abs{\bigcup_{p \in L^{\iota}} p(E^{\iota,\delta})}
    \le \abs{\theta} \int_{E} \abs{P_V(\wh{N}(x))} d_{\R^n}(P_V(n_x), z+V^\perp)  \dd \HH^{n-1}(x) 
    \le C_{n,r} \delta \abs{\theta}   (1+\abs{z}) \HH^{n-1}(E),
  \end{align*}
  which proves \eqref{eq:thmr3.1.1}.

  Next we prove the claim \eqref{eq:r3.1}.
  First we try to simplify the set 
  \begin{align*}
    E^{\iota,\delta} = \{x \in E \colon \nu_x \cap B_{\P^n}([z+V^\perp :1],\delta) \neq \emptyset \}.
  \end{align*}
  From \eqref{eq:equation1}, we know that $[z+V^\perp :1] = \{[u :1] \colon u \in z+V^\perp\} \cup [V^\perp :0]$, 
  which leads to
  \begin{align} \label{eq:r3.2}
    B_{\P^n}([z+V^\perp :1],\delta) 
    = \Big(\bigcup_{u \in z+V^\perp} B_{\P^n}( [u :1], \delta)\Big) \cup B_{\P^n}([V^\perp :0], \delta) .
  \end{align}
  Let $\delta$ be sufficiently small (depending on $r$) as explained in Case 1.
  Since $B_{\P^n}([V^\perp :0], \delta)$ is the neighborhood of points at infinity, 
  we can find a constant $D_r$ depending on $\delta$ (and hence on $r$),
  such that $B_{\P^n}([u :1], \delta) \subset B_{\P^n}([V^\perp :0], \delta)$ for all $u \in z+V^\perp$ with $\abs{u} > D_r$.
  Then \eqref{eq:r3.2} can be simplified as
  \begin{align*}
    B_{\P^n}([z+V^\perp :1],\delta) = \Big(\bigcup_{u \in z+V^\perp, \abs{u} \le D_r} B_{\P^n}( [u :1], \delta)\Big) \cup B_{\P^n}([V^\perp :0], \delta),
  \end{align*}
  which leads to a decomposition of $E^{\iota,\delta}$ denoted by $E^{\iota,\delta} = E_3 \cup E_4$, where
  \begin{align*}
     E_3 &\coloneq \bigcup_{u \in z+V^\perp, \abs{u} \le D_r} \{x \in E \colon \nu_x \cap B_{\P^n}( [u :1], \delta) \neq \emptyset\} , \\
     E_4 &\coloneq \{x \in E \colon \nu_x \cap B_{\P^n}([V^\perp :0], \delta) \neq \emptyset\}. 
  \end{align*}
  
  For $x \in E_4$, since $B_{\P^n}([V^\perp :0], \delta)$ is the projective neighborhood of points at infinity, 
  with similar discussion as the Case 1, 
  we can find a constant $C_n$ (the same as Case 1) such that $E_4 \subset N^{-1}(B_{\P^{n-1}}([V^\perp :0], C_n\delta))$, 
  which further leads to
  \begin{align}\label{eq:r3.3}
    \abs{P_V(\wh{N}(x))} \lesssim C_n\delta, \quad  \forall x\in E_4.
  \end{align}
  
  For $x \in E_3$, we can find $u \in z+V^\perp$ with $\abs{u} \le D_r$ such that
  \begin{align}\label{eq:r3.4}
    \nu_x \cap B_{\P^n}( [u :1], \delta) \neq \emptyset.
  \end{align}
  From the construction of projective space, we know that $B_{\P^n}( [u :1], \delta) = \{[w:1] \colon w \in U_u\}$, 
  where $U_u \subset \R^n$ is a bounded neighborhood satisfying $U_u \subset B_{\R^n}(u, c_u \delta)$ 
  for a constant $c_u > 0$ depending on $u$ ($c_u$ is determined by the local Lipschitz constant of the map $x\mapsto[x:1]$ at $u$).
  Then we have 
  \begin{align*}
    \eqref{eq:r3.4} \iff n_x \cap U_u \neq \emptyset \implies n_x \cap B_{\R^n}(u, c_u \delta) \neq \emptyset 
    \implies n_x \cap B_{\R^n}(z+V^\perp, c_u \delta) \neq \emptyset,
  \end{align*}
  which implies $d_{\R^n}(n_x, z+V^\perp) \le c_u\delta$ for $u \in z+V^\perp$ with $\abs{u} \le D_r$. 
  By taking 
  \begin{align*}
    c_r \coloneq \sup\{c_u \colon \abs{u} \le D_r\} \lesssim_r 1,
  \end{align*}  
  we have 
  \begin{align}\label{eq:r3.5}
    d_{\R^n}(n_x, z+V^\perp) \le c_r\delta, \quad \forall x \in E_3.
  \end{align}
  
  The combination of \eqref{eq:r3.3} and \eqref{eq:r3.5} naturally leads to
  \begin{align*}
    \abs{P_V(\wh{N}(x))} d_{\R^n}(n_x, z+V^\perp)
    \le c_r  \delta + C_n \delta (r+\abs{z})
    \le C_{n,r} \delta (1+\abs{z})
  \end{align*}
  for $x \in E^{\iota,\delta}$, which proves the claim \eqref{eq:r3.1}.
\end{proof}

\section{Venetian blind-type construction and zigzag path}\label{sect:VBandzigzag}

In this section we describe the Venetian blind-type construction, which was introduced in \cite[Section 4, Section 5]{ChCs2019} for planar isometries.
We adapt their method to construct paths in $\Isom(V)$ for any $V \in \GG^2(\R^n)$. 
The classical Venetian blind construction is related to the projection problems and Nikodym sets, which can be found in Falconer's paper \cite{Fal1986}, see also \cite[Theorem 6.9]{Fal1990FractalGM} and \cite[Lemma 11.8]{Mat2015} for related discussions.

\subsection{Zigzag path in $\Isom(V)$}\label{ssect:zigzag_isom}

Roughly speaking, the Venetian blind-type construction is an iteration method of 
replacing a line segment of a given direction by many families of shorter parallel line segments with different directions.
The newly constructed line segments stay in a small neighborhood of the original segment, and their lengths and directions are well controlled.
With the $V\times\R$-parametrization of $\Isom(V)$, see \autoref{def:labelby_V}, we can interpret the line segments as simple isometries in $\Isom(V)$,
and then construct paths in $\Isom(V)$. 

We now fix some notation used in the construction.
For simplicity, we use line segment families labeled by binary indices to describe the construction. 
Let $\{0,1\}^*$ be the collection of all finite sequences of ``0'' and ``1'', 
whose elements are binary indices denoted by boldface letters ``$\mathbf{i, j, k, \cdots}$''.
By convention, $\varnothing$ is used for the empty sequence. 
The notation $\abs{\ii}$ denotes the length of $\ii$ (i.e., the number of ``0''s and ``1''s in $\ii$), 
and $\abs{\ii}_0$ denotes the number of ``1''s in $\ii$.
The notation $\ii^{(k)}$ denotes the binary sequence obtained from $\ii$ by deleting its last $k$ symbols. 
Usually we use $\mathbf{i'}$ for $\ii^{(1)}$. 
We give an example to explain these notations, for $\ii = 0001011$, abbreviated as $\ii = 0^3101^2$, we have
\begin{align*}
  &\mathbf{i'} = \ii^{(1)} = 000101, \ \ii^{(3)} = 0001, \ \ii = \ii^{(0)} =\ii'1 = \ii^{(3)}011, \\
  &\abs{\ii} = 7, \ \abs{\ii}_0 = 3, \ \abs{\ii^{(7)}} =\abs{\varnothing}=0.
\end{align*} 

We can regard $\{0,1\}^*$ as a binary tree rooted at $\varnothing$, and introduce the natural partial order $\preceq$ as
\begin{align*}
  \ii \preceq \jj 
  &\iff  \jj \  \text{is in the subtree rooted at} \ \ii, \\
  &\iff \exists k \in \N, \ s.t. \ \ii = \jj^{(k)}.
\end{align*}
When $\ii \preceq \jj$, we say they lie on the same family line. 
We write $\ii \prec \jj$ for $\ii \preceq \jj$ and $\ii \neq \jj$.
Each $\ii$ has exactly one $k$-th ancestor and $2^k$ $k$-th children in $\{0,1\}^*$.
For any subset $D \subset \{0,1\}^*$, one can define its maximal set as
$$
\max D \coloneq \{\ii \in D  \colon \forall \jj \in D,\ \ii \preceq \jj \implies \ii = \jj \}.
$$ 

We say an index is a ``bad'' index if it ends with ``0'', otherwise we say it is ``good''. 
We also write $g_1(\ii)$ for a good index which is closest to $\ii$ among $\ii$ and its ancestors.
Similarly, $g_k(\ii)$ is defined as the good index which is $k$-th closest to $\ii$ among $\ii$ and its ancestors,
For example, we have $g_1(01010)=0101$, $g_2(01010)=01$ and $g_3(01010)=\varnothing$.

Let $\epsilon >0$ be a small parameter, and $\iota \in \Isom(V)\setminus\{\id\}$.
Our aim is to construct a path in $\Isom(V)$ to connect $\id$ and $\iota$.

From the $V\times\R$-parametrization, $\iota$ corresponds to a point $\eta(\iota) \in (V\times\R)\setminus\{0\}$.
Take a 2-plane $W \in \GG^2(V\times\R)$ passing through $\eta(\iota)$, 
we denote $l = \P^1_W$ to be the direction set of $W$, and $l \in \GG^1(\P^2_{V\times\R})$ is also a projective line.  
In the following constructions, we will replace any constructed line segment by shorter segments in its neighborhood. 
We show this procedure as three parts: 
$$
\text{basic zigzag} \xrightarrow{\text{iteration}} \text{Venetian blind zigzag} \xrightarrow{\text{iteration}} \text{zigzag path}. 
$$

\subsubsection{Basic zigzag}\label{ssect:basiczigzag}

Let $\sigma \in \{1,-1\}$, and $\gamma, \beta$ be angles satisfying
\begin{align}\label{eq:angle_initial}
  \gamma, \beta \in (0, \frac{\pi}{4}), \quad \gamma \le \min \{\beta, \frac{\pi}{2}-2\beta\}.
\end{align}
We first describe a geometric construction related to the basic zigzag.

Let $y \in V\times\R$, then
the subspace $\spn\{y,P_W(y)\}$ divides $\spn\{y,W\}$ into two half-spaces, the left part and the right part, which we distinguish by the sign parameter $\sigma \in \{1,-1\}$.
We define the oriented angle $\angle(y,z)$ to be positive for vectors $z$ in the left part and negative for those in the right part.

If $y$ satisfies
\begin{align}\label{eq:basiczigzag_angle_assumption}
  \angle(y, W) < \beta,
\end{align}
where $\angle(y, W)$ is the angle between $y$ and $W$,
then we can find $y_0 \in W$ with 
\begin{align}\label{eq:y_0}
  \angle(y,y_0) = -\sigma\beta.
\end{align}
With this fixed $y_0$ as the input, we can find $\wt{y}_1 \in V\times\R$, such that
\begin{align*}
  y = y_0+\wt{y}_1,\ \angle(y,\wt{y}_1) = \sigma\gamma.
\end{align*}
Then, by \autoref{lem:approx_N_term}, we can find $M$ sufficiently large and $y_1 \in V\times\R$ sufficiently close to $\wt{y}_1$, i.e.,
\begin{align*}
  \abs{y_1-\wt{y}_1} \le o_M(1),
\end{align*} 
such that
\begin{align}
  \eta^{-1}(y) =\eta^{-1}(y_0/M) \circ \eta^{-1}(y_1&/M) \circ \cdots \circ  \eta^{-1}(y_0/M) \circ \eta^{-1}(y_1/M), \label{item_a}  \\
  \angle(y,y_1) &= \sigma\gamma + o_M(1), \label{item_b} 
\end{align}
where the righthand side of \eqref{item_a} is a composition of $2M$ terms from right to left.
From \eqref{eq:y_0} and \eqref{item_b}, and the fact that $o_M(1)$ can be arbitrarily small, we know that $\{y,y_0,y_1\}$ almost forms a triangle with two interior angles $\beta$ and (almost) $\gamma$.

For the given $y$, fix a direction $\theta \in l$ associated with $y$ (we will determine it later).
We denote by $\LL(y,\theta)$ a collection of line segments parallel to the vector $y$, and assigned the direction $\theta \in l$.
Let $y_0 \in W$ be fixed with $\angle(y,y_0) = -\sigma\beta$.
From the above geometric construction,
we can find a sufficiently large integer $M$ and a vector $y_1 \in V\times\R$ to decompose $y$ by $y_0/M$ and $y_1/M$, as suggested by \eqref{item_a}.
We first divide each line segment $L \in \LL$ equally into $M$ shorter subsegments, 
and then replace each subsegment by two segments parallel to $y_{0}, y_{1}$, respectively, see \autoref{fig:zigzag0}. 
All subsegments parallel to $y_0$ are associated with direction $\theta_0 \coloneq \theta - \sigma \beta \in l$ and collected in $\LL(y_0,\theta_0)$. 
Similarly, $\LL(y_1,\theta_1)$ consists of subsegments parallel to $y_1$, assigned the direction $\theta_1= \theta + \sigma \gamma \in l$.

%%%%%%%%%  % `\fontsize{1}{1.2}\selectfont' to control small text, just put it befor `\smash'
\begin{figure}[htbp]
    \centering
    \def\svgwidth{\textwidth}
    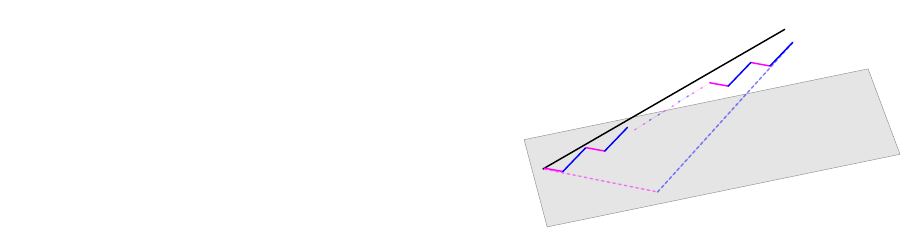
    \caption{Basic zigzag}
    \label{fig:zigzag0}
\end{figure}

%%%%%%%%%

This procedure is summarized as a $(\gamma, \beta, \sigma, M, y_{0})$-basic zigzag construction on $\LL(y,\theta)$, which we denote by the operator $Z_{\gamma, \beta, \sigma, M, y_{0}}$:
\begin{align*}
  \LL(y,\theta) \xlongrightarrow{Z_{\gamma, \beta, \sigma, M, y_{0}}} \LL(y_0,\theta_0) \cup \LL(y_1,\theta_1) \eqcolon Z_{\gamma, \beta, \sigma, M, y_{0}}(\LL(y,\theta)).
\end{align*}
The resulting collection of line segments has many ``Z''-turns, which explains the name ``Zigzag''.
The sufficiently large parameter $M$ is called the fineness parameter and to be determined later. 
It is worth noting that $Z_{\gamma, \beta, \sigma, M, y_{0}}(\LL(y,\theta))$ always stays in a neighborhood of $\LL$, 
and the neighborhood shrinks as the fineness $M$ increases.

For simplicity, we describe the following constructions mainly using the language of line segments in $V\times\R$,
and the results are always understood as isometry paths in $\Isom(V)$, just as the $V\times\R$ parametrization of $\Isom(V)$ suggests.

\subsubsection{Venetian blind zigzag}\label{ssect:Vb}

Let $\gamma, \beta, \sigma$ be the same parameters as stated above, 
and $k \in \N$ satisfy 
\begin{align*}
  \frac{\pi}{2}- 2\beta- \gamma < k\gamma \le \frac{\pi}{2}- 2\beta.
\end{align*}

Now for a fixed binary index $\ii \in \{0,1\}^*$, let $y_{\ii} \in V\times \R$, $\theta_{\ii} \in l$,
and $\LL(y_{\ii},\theta_{\ii})$ be a family of line segments parallel to $y_{\ii}$ and associated with $\theta_{\ii} \in l$.

We will iterate the basic zigzag construction many times on $\LL(y_{\ii},\theta_{\ii})$ to get two families $\LL(y_{\ii0},\theta_{\ii0})$ and $\LL(y_{\ii1},\theta_{\ii1})$ 
corresponding to line segments parallel to $y_{\ii0}$ and $y_{\ii1}$, respectively.
The vectors $y_{\ii0}, y_{\ii1} \in V\times\R$ and directions $\theta_{\ii0}, \theta_{\ii1} \in l$ will be determined in later discussion.

To describe the iterations of basic zigzags, 
we introduce an additional binary index $\jj$ to form a finer poset $\{(\ii, \jj)\}_{\jj}$ between $\ii$ and $\{\ii0,\ii1\}$.
More precisely, we introduce the finer poset as
\begin{align}\label{eq:finner_poset}
  \ii \coloneq \ii,\varnothing \prec 
    \begin{cases}
      \ii,0 \prec  \ii0  \\
      \ii,1 \prec 
        \begin{cases}
          \ii,10 \prec \ii0 \\
          \ii,11 \prec \cdots 
            \begin{cases}
              \ii,1^{k-1}0 \prec \ii0 \\
              \ii,1^{k} \prec \ii1,
            \end{cases}
        \end{cases}
    \end{cases} 
\end{align} 
where we call $\{(\ii,\jj)\}$ and $\{\ii\}$ the small indexes (for describing the basic zigzag iterations) 
and big indexes (for describing the Venetian blind iterations), respectively. 
As before, we say a small index $(\ii,\jj)$ is ``bad'' if $\jj$ ends with ``0'', and ``good'' otherwise.

We define $y_{\ii, \varnothing} = y_{\ii}$ and $\theta_{\ii,\varnothing} = \theta_{\ii}$.
If, similar to \eqref{eq:basiczigzag_angle_assumption}, we have
\begin{align}\label{eq:cond_VBzigzag}
  \angle(y_{\ii}, W) = \angle(y_{\ii, \varnothing}, W) < \beta,
\end{align}
then we can find $y_{\ii, 0} \in W$ such that $\angle(y_{\ii,\varnothing},y_{\ii, 0}) = -\sigma\beta$.

Iteratively, we use the $(\gamma, \beta, \sigma, M, y_{\ii, 0})$-basic zigzag construction introduced in \autoref{ssect:basiczigzag} at any index $(\ii,\jj)$ with $\jj \in \{\varnothing, 1, 11,\dots, 1^{k-1}\}$ (see \eqref{eq:finner_poset}), and choose different fineness parameters as we need.
In a typical iteration at $(\ii,\jj)$, we set $y_{\ii,\jj0} = y_{\ii,0}$, and choose $M_{\ii,\jj} \gg 1$ and $y_{\ii,\jj1}, \wt{y}_{\ii,\jj1} \in V\times\R$ such that $ y_{\ii,\jj} = y_{\ii,\jj0} +\wt{y}_{\ii,\jj1}$ and
\begin{align}\label{eq:vectorclose_j1}
\abs{y_{\ii,\jj1} - \wt{y}_{\ii,\jj1}} \le o_{M_{\ii,\jj}}(1),  
\end{align} 
then the basic zigzag iteration procedure  gives
\begin{align*}
  \LL(y_{\ii,\jj},\theta_{\ii,\jj}) \xlongrightarrow{Z_{\gamma, \beta, \sigma, M_{\ii,\jj}, y_{\ii, 0}}}  \LL(y_{\ii,\jj0},\theta_{\ii,\jj0}) \cup \LL(y_{\ii,\jj1},\theta_{\ii,\jj1}),
\end{align*}
where $y_{\ii,\jj0} =y_{\ii, 0}$, and 
\begin{align}
  &\theta_{\ii,\jj0} = \theta_{\ii,0} = \theta_{\ii,\varnothing} -\sigma \beta \label{eq:thetaangle_j_j0} \\
  &\theta_{\ii,\jj1} = \theta_{\ii,\jj} +\sigma \gamma=\cdots = \theta_{\ii,\varnothing} + \abs{\jj1}\sigma \gamma \label{eq:thetaangle_j_j1}\\ 
  &\angle (y_{\ii,\jj},y_{\ii,\jj1}) = \sigma\gamma +o_{M_{\ii,\jj}}(1). \label{eq:vectorangle_i_i1}
\end{align}

After the iterations described above, each resulting line segment parallels to either $y_{\ii,1^k}$ or $y_{\ii,0}$. 
Let 
\begin{align*}
  &y_{\ii0} \coloneq y_{\ii,0} , \quad \theta_{\ii0} \coloneq \theta_{\ii,0} = \theta_{\ii}-\sigma \beta, \\
  &y_{\ii1} \coloneq y_{\ii,1^{k}},\quad \theta_{\ii1} \coloneq \theta_{\ii,1^{k}} =\theta_{\ii}+k\sigma \gamma.
\end{align*}
Then the resulting line segments of the above iterations can be divided into two new families:
\begin{align*}
  \LL(y_{\ii0},\theta_{\ii0}) &\coloneq \LL(y_{\ii, 0},\theta_{\ii, 0})\cup \cdots \cup \LL(y_{\ii, 1^{k-1}0},\theta_{\ii,1^{k-1}0}), \\
  \LL(y_{\ii1},\theta_{\ii0}) &\coloneq \LL(y_{\ii, i^k},\theta_{\ii, 1^k}).
\end{align*}
Let $M_{\ii} \coloneq \{M_{\ii,\varnothing}, M_{\ii,1}, \dots, M_{\ii,1^{k-1}}\}$, which contains all the fineness parameters used in this procedure.
We say $M_{\ii}$ is sufficiently large and denote it by $M_{\ii}\gg1$, to mean that the used fineness parameters are sufficiently large.

We call the above iteration procedure as the $(\gamma, \beta, \sigma, M_{\ii})$-Venetian blind zigzag construction on $\LL(y_{\ii},\theta_{\ii})$, 
and summarize it as an operator $V_{\gamma, \beta, \sigma, M_{\ii}}$:
\begin{align*}
      \LL(y_{\ii},\theta_{\ii}) \xlongrightarrow{V_{\gamma, \beta, \sigma, M_{\ii}}}  
      \LL(y_{\ii0},\theta_{\ii0}) \cup \LL(y_{\ii1},\theta_{\ii1})  \eqcolon V_{\gamma, \beta, \sigma, M_{\ii}}(\LL(y_{\ii},\theta_{\ii})).
\end{align*}
For $\kk \in \{\ii0,\ii1\}$, any line segment $L \in \LL(y_{\kk},\theta_{\kk})$ is parallel to $y_{\kk}$ and assigned the direction $\theta_{\kk} \in l$.

Now we focus on some geometric properties between $y_{\ii}, y_{\ii0}, y_{\ii1}$ and $W$. 
We already know that 
\begin{align*}
  \angle(y_{\ii}, y_{\ii0}) = -\sigma\beta, \quad  \angle(y_{\ii0},W) =0.
\end{align*}
More importantly, we will show the following results:
  \begin{align}\label{eq:zigzag}
    \angle (y_{\ii},y_{\ii1}) = k\sigma\gamma +o_{M_{\ii}}(1), 
    \quad \sin\angle(y_{\ii1},W) \le \frac{\sin \angle(y_{\ii},W)}{\sin \beta} + o_{M_{\ii}}(1),
  \end{align}
where $o_{M_{\ii}}(1) \coloneq o_{M_{\ii,\varnothing}}(1) + \cdots + o_{M_{\ii,1^k}}(1)$.

Actually, for each $\jj \in \{\varnothing, 1, 11,\dots, 1^{k-1}\}$, from $y_{\ii,\jj}-y_{\ii,0} -y_{\ii,\jj1} = \wt{y}_{\ii,\jj1}-y_{\ii,\jj1}$ and \eqref{eq:vectorclose_j1}, we have
\begin{align*}
  \abs{y_{\ii,\varnothing}-(\abs{\jj1}y_{\ii,0}+y_{\ii,\jj1})}
  &=\abs{[y_{\ii,\varnothing}-(\abs{\jj}y_{\ii,0}+y_{\ii,\jj})] +[y_{\ii,\jj}-y_{\ii,0} -y_{\ii,\jj1}]} \\
  &\le \abs{y_{\ii,\varnothing}-(\abs{\jj}y_{\ii,0}+y_{\ii,\jj})} + o_{M_{\ii,\jj}}(1).
\end{align*}
By iterating the above inequality, we have
\begin{align}\label{eq:vectorclose_0_jj1}
  \abs{y_{\ii,\varnothing}-(\abs{\jj1}y_{\ii,0}+y_{\ii,\jj1})} \le o_{M_{\ii,\varnothing}}(1) + o_{M_{\ii,1}}(1)+ \cdots + o_{M_{\ii,\jj}}(1) \le o_{M_{\ii}}(1).
\end{align}
Thus, $\abs{y_{\ii,\varnothing}-(\abs{\jj1}y_{\ii,0}+y_{\ii,\jj1})}$ can be arbitrarily small provided that $M_{\ii} \gg 1$,
which implies that for each $\jj$, the vectors $\{y_{\ii,\varnothing}, \abs{\jj1}y_{\ii,0}, y_{\ii,\jj1}\}$ almost form a triangle.
This leads to
\begin{enumerate}[label=(\arabic*)]
  \item the vectors $\{y_{\ii,1},\ y_{\ii,11},\ \dots,\ y_{\ii,1^k} \}$ almost lie in the 2-plane generated by $\{y_{\ii,\varnothing},y_{\ii,0}\}$, which, combining with \eqref{eq:vectorangle_i_i1}, implies
        \begin{align}\label{eq:angle_empty_i1}
          \angle (y_{\ii,\varnothing},y_{\ii,\jj1}) = \sum_{\kk \prec \jj1} \angle (y_{\ii,\kk},y_{\ii,\kk1}) +o_{M_{\ii}}(1) =\abs{\jj1}\sigma\gamma +o_{M_{\ii}}(1);
        \end{align}
  \item for any vector $z_r = y_{\ii,\varnothing}-ry_{\ii,0}$ with $r \in \R$, from $y_{\ii,0} \in W$ and $\abs{\angle(y_{\ii,\varnothing},y_{\ii,0})} = \beta$, a simple geometric calculation yields 
        \begin{align*}
          \sin \angle(z_r, W) \le \frac{\sin\angle(y_{\ii}, W)}{\sin \beta}, \quad \forall r \in \R.
        \end{align*}
        Thus, combination with \eqref{eq:vectorclose_0_jj1}, 
        we can find $r_{\jj1} \in \R$ close to $\abs{\jj1}$ and vector $z_{\ii,\jj1}= y_{\ii,\varnothing}-r_{\jj1}y_{\ii,0}$ satisfying $\abs{z_{\ii,\jj1}-y_{\ii,\jj1}} \le o_{M_{\ii}}(1)$,
        such that
        \begin{align}\label{eq:close_W}
          \sin \angle(y_{\ii,\jj1}, W) \le \sin\angle(z_{\ii,\jj1}, W) + \abs{\angle(y_{\ii,\jj1},z_{\ii,\jj1})} \le \frac{\sin\angle(y_{\ii,\varnothing}, W)}{\sin \beta} +o_{M_{\ii}}(1). 
        \end{align}
\end{enumerate}
Applying \eqref{eq:angle_empty_i1} and \eqref{eq:close_W} for $\jj = 1^{k-1}$ naturally leads to \eqref{eq:zigzag}.

In the following discussions, we abbreviate the families $\LL(y_{\ii},\theta_{\ii})$ as $\LL_{\ii}(\theta_{\ii})$ or $\LL_{\ii}$ for simplicity.
We give a schematic diagram of the basic zigzag iteration procedure in \autoref{fig:zigzag1}, where the line segments are understood to lie close to a 2-plane $W$.

%%%%%%%%%
\begin{figure}[htbp]
    \centering
    \def\svgwidth{\textwidth}
    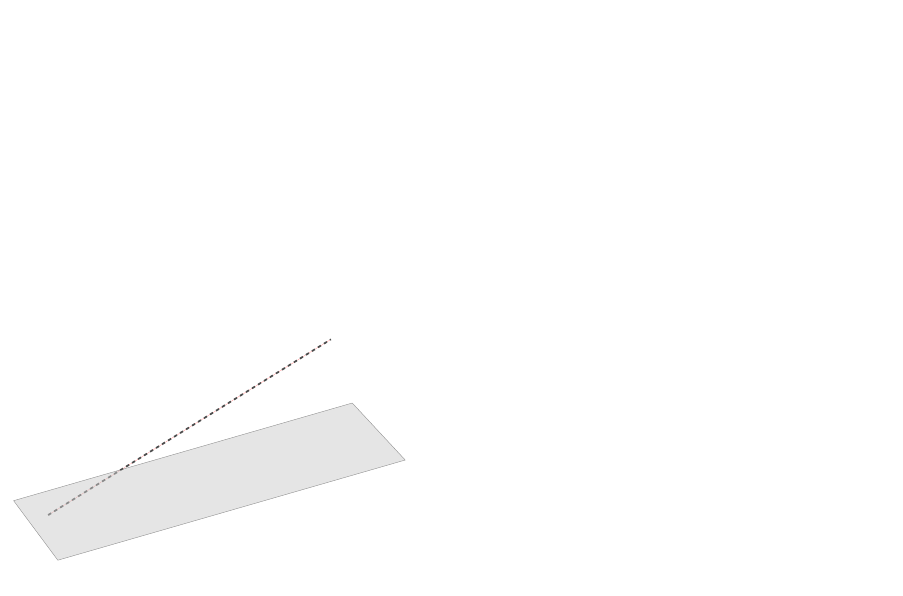
    \caption{Venetian-blind zigzag}
    \label{fig:zigzag1}
\end{figure}

%%%%%%%%%

From $\angle(y_{\ii},y_{\ii0}) = -\sigma \beta$ and $\angle (y_{\ii},y_{\ii1}) = k\sigma\gamma +o_{M_{\ii}}(1)$ (see \eqref{eq:zigzag}), a simple computation leads to
\begin{align}\label{eq:length_control}
  \HH^1(\LL_{\ii1}) \lesssim \frac{\sin \beta}{\sin (k\gamma + \beta)} \HH^1(\LL_{\ii}), \quad
  \HH^1(\LL_{\ii0}) \lesssim \frac{\sin k\gamma}{\sin (k\gamma + \beta)} \HH^1(\LL_{\ii}),
\end{align}
where the implicit constant is close to 1 and uniform to all indexes $\ii$ (coming from the fact that $\{y_{\ii},y_{\ii0},y_{\ii1}\}$ forms a shape very close to a triangle).
This shows that $\LL_{\ii1}$ is quantitatively short and $\LL_{\ii0}$ is quantitatively long.
We regard $\gamma$ as a good angle and $\LL_{\ii1}$ as a good line segments family labeled by good index. 
Similarly, we define the bad angle $\beta$ and the bad line segments family $\LL_{\ii0}$ (see \cite[Section 4]{ChCs2019} for more details).
From \eqref{eq:thetaangle_j_j0} and \eqref{eq:thetaangle_j_j1}, all the directions in $l$ that have been used in this procedure are 
\begin{align}\label{eq:directions}
  \{\theta_{\ii, 1}, \theta_{\ii, 11},\dots, \theta_{\ii, 1^k}=\theta_{\ii1}, \theta_{\ii, 0}=\theta_{\ii0}\}= \{\theta_{\ii} + \sigma\gamma, \dots, \theta_{\ii} + \sigma k\gamma,\theta_{\ii} - \sigma \beta\}.
\end{align}.

\subsubsection{Zigzag path}\label{ssect:zigzagpath}

We set the line segment from origin to $y_\varnothing =\eta(\iota)$ as $\LL_\varnothing(\theta_{\varnothing})$, 
which is associated with the direction $\theta_{\varnothing} = [\eta(\iota)] \in l$.
This original line segment corresponds to the natural isometry path from $\id$ to $\iota$.
Recall that we chose $W$ to pass through $\eta(\iota)$, thus we have $y_{\varnothing} \in W$ and $\theta_{\varnothing} \in l$.

Let $\epsilon$ be a small positive number.
We will start from $\LL_\varnothing(\theta_{\varnothing})$ to iterate the Venetian blind construction with different parameters to obtain a zigzag path $P\subset \Isom(V)$, 
which contains many ``good'' segments and ``bad'' segments associated with different directions in $l$.
The whole iteration procedure takes place within a small neighborhood of $\LL_\varnothing(\theta_{\varnothing})$, 
and the projective centers of the isometries used lie in $B_{\P^2_{V\times\R}}(l,\epsilon)$.

Although each Venetian blind construction contains many basic zigzag iteration repressed by small indexes $\{(\ii,\jj)\}_{\jj}$, we can use only big indexes to describe the result.
Thus, the entire iterative process can be indexed by big indexes in $\{0,1\}^*$ (but we still can transform to small indexes when needed, see \eqref{eq:finner_poset}). 
To each $\ii\in\{0,1\}^*$ we associate a family $\LL_{\ii}(\theta_\ii)$ of parallel line segments (related to a direction $\theta_{\ii} \in l$), 
a direction set $J_{\ii}\subset l$, Venetian blind parameters $\{\gamma_\ii,\beta_\ii,\sigma_\ii,M_{\ii}\}$, 
and a length parameter $\epsilon_\ii>0$.

The positive length parameters $\{\epsilon_{\ii}\}_{\ii \in \{0,1\}^*}$ satisfy $\sum_{\ii \in \{0,1\}^*} \epsilon_{\ii} < \epsilon$ and $\epsilon_{\ii} = \epsilon_{g_1(\ii)}$ for all $\ii \in \{0,1\}^*$.
For each $\ii \in \{0,1\}^*$, the angles $\gamma_{\ii}$ and $\beta_{\ii}$ are chosen to satisfy \eqref{eq:angle_initial} and the following conditions:
\begin{align}
  &\beta_\ii \le \min\{\beta_\mathbf{i'}, \frac{1}{\abs{\ii}_0}\},\ \beta_\ii = \beta_{g_1(\ii)}, \label{cond-beta_decreasing}\\
  &\beta_\ii \HH^1(\LL_\ii) \le \epsilon_\ii,\ \text{for good index } \ii, \label{cond-beta} \\
  &\gamma_\ii \HH^1(\LL_\ii) \le \epsilon_\ii,\ \text{for all index } \ii.  \label{cond-gamma}
\end{align}
These conditions can be satisfied, since $\HH^1(\LL_{\ii})$ is essentially controlled by $\{\gamma_{\jj},\beta_{\jj}\}_{\jj \prec \ii}$, see \eqref{eq:length_control}.
For fixed $\ii$, the direction set $J_{\ii}$ and the parameters $\sigma_{\ii}, M_{\ii}$ will de determined during the iteration process.

\textbf{Beginning step.}
We start with $\LL(y_\varnothing,\theta_{\varnothing})$ and define $J_{{\varnothing}} \coloneq \{\theta_{\varnothing}\}$.
We choose $\sigma_{\varnothing} \in \{1, -1\}$ arbitrarily.
Since $y_{\varnothing} \in W$, we can naturally find $y_0 \in W$ satisfying $\angle(y_{\varnothing},y_0) = -\sigma_{\varnothing}\beta_{\varnothing}$.
For sufficiently large $M_{\varnothing}$, applying the $(\gamma_{\varnothing}, \beta_{\varnothing}, \sigma_{\varnothing},M_\varnothing)$-Venetian blind construction on $\LL(y_\varnothing,\theta_{\varnothing})$ gives 
\begin{align*}
  \LL(y_{\varnothing},\theta_{\varnothing}) \xlongrightarrow{V_{\gamma_{\varnothing}, \beta_{\varnothing}, \sigma_{\varnothing}, M_{\varnothing}}}  \LL(y_{0},\theta_{0}) \cup \LL(y_{1},\theta_{1}).
\end{align*}

Then the direction sets of $\LL(y_{0},\theta_{0})$ and $\LL(y_{1},\theta_{1})$ are defined as 
closed intervals $J_0 \coloneq [\theta_{\varnothing}, \theta_0]$ and $J_1 \coloneq [\theta_{\varnothing}, \theta_1]$, respectively.

\textbf{Iteration step.}
For a general index $\ii \in \{0,1\}^*$, 
assume the families $\{\LL(y_{\jj},\theta_{\jj})\}_{\jj \preceq \ii}$ 
and the related parameters $\{\gamma_{\jj}, \beta_{\jj}, \sigma_{\jj},M_{\jj}\}_{\jj \prec \ii}$ have been constructed.
We also know the angles $\{\gamma_{\ii}, \beta_{\ii}\}$, and the parameters $\sigma_{\ii}$ and $M_{\ii}$ will be determined later.
To apply a $(\gamma_{\ii}, \beta_{\ii}, \sigma_{\ii},M_{\ii})$-Venetian blind zigzag construction at index $\ii$, 
we need to check the condition \eqref{eq:cond_VBzigzag}.
For bad $\ii$, we already have $\angle(y_{\ii},W) =0 < \beta_{\ii}$.
For good $\ii$, we can use \eqref{eq:zigzag} iteratively for the indexes $\jj \prec\ii$ to get
\begin{align*}
  \sin\angle(y_{\ii},W) 
  &\le \frac{\sin \angle(y_{\ii'},W)}{\sin \beta_{\ii'}} + o_{M_{\ii'}}(1) \le \cdots \\
  &\le \frac{\sin \angle(y_{\varnothing},W)}{\prod_{\jj \prec \ii}\sin \beta_{\jj}} + \sum_{\jj\prec\ii} \frac{o_{M_{\jj}}(1)}{\prod_{\jj \prec\kk \prec \ii}\sin\beta_{\kk}} 
   \le \sum_{\jj\prec\ii} \frac{o_{M_{\jj}}(1)}{(\sin\beta_{\ii})^{\abs{\ii}-\abs{\jj}-1}},
\end{align*}
where the last inequality use the fact that $y_{\varnothing} \in W$ and \eqref{cond-beta_decreasing}.
Furthermore, by taking the fineness parameters $\{M_{\jj}\}_{\jj \prec \ii}$ sufficiently large, we can ensure that 
\begin{align}\label{eq:angle_y_W}
  \angle(y_{\ii},W) \le \sum_{\jj\prec\ii} \frac{o_{M_{\jj}}(1)}{(\sin\beta_{\ii})^{\abs{\ii}-\abs{\jj}-1}} < \beta_{\ii}
\end{align}
as desired in \eqref{eq:cond_VBzigzag}.

Thus, we can apply the $(\gamma_{\ii}, \beta_{\ii}, \sigma_{\ii}, M_{\ii})$-Venetian blind zigzag construction on $\LL_{\ii}(\theta_\ii)$ with a sufficiently large $M_{\ii}$, 
and continue the iterating as
\begin{equation*}
  \LL_{\ii}(\theta_\ii) 
  \xrightarrow{V_{\gamma_\ii, \beta_\ii, \sigma_\ii, M_{\ii}}} 
  \LL_{\ii1}(\theta_{\ii1}) \cup \LL_{\ii0}(\theta_{\ii0}),
\end{equation*}
where we use $\LL_{\ii}(\theta_{\ii})$ for $\LL(y_{\ii},\theta_{\ii})$ for simplicity.
For the new index $\jj \in \{\ii1, \ii0\}$, the direction set of $\LL_{\jj}(\theta_{\jj})$ is defined as 
\begin{equation*}
  J_{\jj} = J_{\ii} \cup [\theta_{\ii}, \theta_{\jj}] \subset l,
\end{equation*}
which contains all the directions used in the iterations of $\{\kk \colon \kk \preceq \jj\}$.
The sign parameter $\sigma_{\ii} \in \{1,-1\}$ is chosen so as to maximize the length of the direction set $J_{\ii1}$.

\textbf{Stopping time condition.}
The iterative process terminates according to two conditions, which we refer to as the local and global stopping conditions (see \cite[Subsection 4.7]{ChCs2019} for more discussions).

$\mydot$ Local stopping condition:
The iteration is stopped at the index $\ii$ if 
$$\HH^1(\LL_{\ii}) \le \epsilon_{g_1(\ii)}.$$
And we denote the collection of all such locally stopped indices by $S\subset \{0,1\}^*$, 
whose corresponding line families have short length controlled by parameters $\{\epsilon_{\ii}\}$. 

$\mydot$ Global stopping condition:
We can find a positive integer $k$ such that
\begin{equation*}
  \max\{\HH^1(l \setminus J_{\ii}) \colon \abs{\ii}=k,\ \ii\notin S \} \le \epsilon.
\end{equation*}
And we stop the iteration at $\ii$ if $\abs{\ii} >k$, which is the global stopping condition. 
We collect all the used big indices in the iterations as a set $D \subset \{0,1\}^*$.

\textbf{Final zigzag path.}
The resulting zigzag path is formed from the line families indexed by $\max D$ and can be decomposed as
\vspace{-0.3\baselineskip}
\begin{align*}
  P= (\bigcup_{\ii \in S} \LL_{\ii}) \cup (\bigcup_{\ii \in (\max D) \setminus S} \LL_{\ii} ).
\end{align*}
See \autoref{fig:zigzag2} for an illustration of the iteration procedure.

%%%%%%%%%%%  0 uses '\fontsize{7}{9}\selectfont' ; 01 uses '\fontsize{5}{7}\selectfont' to replace '\lineheight{1.25}'
\begin{figure}[htbp]
    \centering
    \def\svgwidth{0.93\textwidth}
    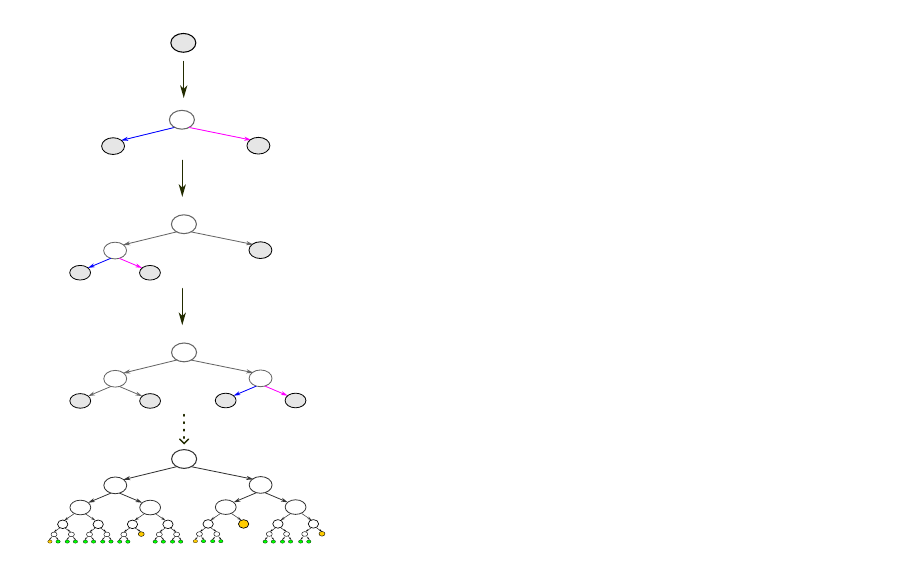
    \captionsetup{justification=centering}
    \caption{Binary index iterations (left) and line segments (right): \\ yellow+green = $\max D$, yellow = $S$; segments indexed by $\max D$ form the zigzag path. \\Segments are close to a 2-plane (drawn planar for simplicity).}
    \label{fig:zigzag2}
\end{figure}

%%%%%%%%%%%

For any $\ii \in D$, the segment family $\LL_{\ii}$ is assigned to a direction $\theta_{\ii} \in l$, 
a directional interval $J_{\ii} \subset l$ and a number $\epsilon_{\ii} >0$.
All the parallel line segments of $\LL_{\ii}$, taken together, correspond to a vector $y_{\ii}$ near $W$ (see \eqref{eq:angle_y_W}).
The $V\times\R$-parametrization guarantees that 
$\LL_{\ii}$ can be interpreted as a path in a sufficiently small neighborhood of the natural path $L^{\iota_{\ii}}$ from $\id$ to $\iota_{\ii} \coloneq \eta^{-1}(y_{\ii}) \in \Isom(V)$.
And we have $\HH^1(\LL_{\ii}) = \abs{y_{\ii}} = \abs{\eta(\iota_{\ii})}$.
For any compact set $E$, from \autoref{lem:smallneighborhood} and \autoref{lem:smallmeasisom}, the Lebesgue measure of the set covered by moving $E$ along $\LL_{\ii}$ can be estimated as
  \begin{align}\label{eq:controlby_iota}
    \bg[v]{\bigcup_{p \in \LL_{\ii}}p(E)} 
    \lesssim  \bg[v]{\bigcup_{p \in L^{\iota_{\ii}}}p(E)} \lesssim_{E} \HH^1(\LL_{\ii}) \HH^{n-1}(E),
  \end{align} 
where the last inequality has an implicit constant related to the boundedness of $E$.

More importantly, for the indexes appeared in the zigzag path (i.e., $\ii \in \max D$), we have 
  \begin{align}
    &\HH^1(\LL_\ii) = \abs{\eta(\iota_{\ii})} \lesssim \epsilon_{g_1(\ii)}, \quad \text{for} \quad \ii \in S; \label{eq:isom_epsilon.2} \\
    &\HH^1(l \setminus J_{\ii}) < \epsilon, \quad \text{for} \quad \ii \in (\max D) \setminus S. \label{eq:isom_epsilon.3}
  \end{align} 
where $g_1(\ii)$ is the last good index among $\ii$ and its ancestors.

All the used directions can be expressed using small indexes as $\{\theta_{\ii,\jj}\} \subset l$ (see \eqref{eq:directions}).
They divide the projective line $l$ into finitely many closed intervals $\{J\}$, whose interiors are pairwise disjoint.
Such closed intervals of $l$ are called the elementary intervals of $P$.
There is an estimate on the length of the elementary interval given by 
\vspace{-0.3\baselineskip}
  \begin{align}\label{eq:elementary_interval_length2}
    \text{elementary interval}\  J \subset [\theta_{\ii}, \theta_{\ii'}] \implies \HH^1(J) \le \alpha_{\ii},
  \end{align}
\vspace{-0.3\baselineskip}
where 
\vspace{-0.3\baselineskip}
  \begin{align}\label{eq:alpha_angle}
    \alpha_{\ii}=
      \begin{cases}
        \gamma_{\ii'}, &\ii \ \text{is good}, \\
        \beta_{\ii'}, &\ii \ \text{is bad}.
      \end{cases}
    \end{align}
\vspace{-0.3\baselineskip}

% From \eqref{eq:isom_epsilon.2}, the family $\LL_\ii$ with $\ii\in S$ satisfies $\HH^1(\LL_\ii)<\epsilon_{g_1(\ii)}$, which, combining with \autoref{lem:measisom}, implies that moving along isometries in $\LL_\ii$ naturally cover a small set.
% The family $\LL_\ii$ with $\ii\in\max D\setminus S$ has a large direction sets $J_{\ii} \subset l$ satisfying \eqref{eq:isom_epsilon.3}, which, combining with the projective axis introduced in \autoref{ssect:simple projective rotations} and \autoref{lem:smallmeasisom}, can be used to determine a large subset of $E$ to be moved along $\LL_\ii$.
% These are the basic ideas of moving subsets of $E$ along $P$ to cover a set with small Lebesgue measure.

\subsection{Zigzag path in $\Isom(\R^n)$}\label{ssect:path_Isom_R^n}

For a given isometry $\iota \in \Isom(\R^n)$,
we need to construct a path in $\Isom(\R^n)$ to connect the identity $\id$ and  $\iota$.
From \autoref{lem:decomp_isometry}, any isometry $\iota \in \Isom(\R^n)$ can be decomposed into $k$ simple rotations in the planes $\{V_i\}_{i=1}^k \subset \GG^2(\R^n)$, with $k \le 2+\floor{\frac{n}{2}}$, say
\begin{align*}
  \iota = \rho_k \circ \cdots \circ \rho_2 \circ \rho_1,
\end{align*}
where $\rho_i$ is a simple rotation related to $V_i$ for $i=1, \dots, k$ (see \autoref{def:V_rotat}).
Suppose we have constructed, for each $i$, a path $P_i \subset \Isom(V_i)$ from $\id$ to $\rho_i$.
Then the (ordered) combination of such paths is
\begin{align*}
  P \coloneq P_k \circ \cdots \circ P_2 \circ P_1,
\end{align*}
which is a zigzag path in $\Isom(\R^n)$ that connects $\id$ and $\iota$.

\section{Kakeya isometry: proof of \autoref*{thm:global_Kak_isom} and \autoref*{thm:local_Kak_isom}}\label{sect:Kak_isom}

We prove \autoref{thm:local_Kak_isom} and \autoref{thm:global_Kak_isom} in \autoref{ssect:proof_loc_Kak_isom} and \autoref{ssect:proof_glo_Kak_isom}, respectively.

\subsection{Proof of \autoref*{thm:local_Kak_isom}}\label{ssect:proof_loc_Kak_isom}

Assume $V \in \GG^2(\R^n)$ is fixed, $\iota \in \Isom(V)$ and $\epsilon >0$ are given.
First we show how to move subsets of a bounded $(n-1)$-rectifiable set $E$ to $\iota(E)$ using only isometries in $\Isom(V)$,
and cover a set with Lebesgue measure smaller than $\epsilon$.
The result is described as the following lemma.

\begin{lemma}[$\epsilon$-isometry path for $(n-1)$-rectifiable set]\label{lem:kak_isom_epsilon}
  
  Suppose $V \in \mathcal{G}(n,2)$ is a $2$-plane, $r, \epsilon, \delta >0$,  and $\delta < \epsilon/2$ are sufficiently small.
  $E \subset B(0,r)$ is a bounded $(n-1)$-rectifiable set with $\HH^{n-1}(E) < \infty$. 

  Let $\iota \in \Isom(V)\setminus\{\id\}$ be a simple isometry with projective center $\cC_{\iota} \in \P^2_{V\times\R}$,
  and $l \in \GG^1(\P^2_{V\times\R})$ be a projective line passing through the projective center.
  Then there is a polygonal isometry path $P = \bigcup_{k} \LL_{k} \subset \Isom(V)$ connecting $\id $ and $\iota$, 
  where each $\LL_k$ consist of small simple isometries with the same projective center near $l$.
  And for each $k$, there exists 
  \begin{align}\label{eq:kak_isom_refine1.0}
    u_k \in l \setminus \widebar{B}_l(\cC_{\iota}, \epsilon)
  \end{align}
  such that
  \begin{align}\label{eq:kak_isom1.1}
    \bgg[v]{\bigcup_k \bigcup_{p \in \LL_k} p(E(u_k, \epsilon, \delta))} \lesssim C_r \HH^{n-1}(E) \epsilon,
  \end{align}
  where the moved set is defined as
  \begin{align}\label{eq:kak_isom1.2}
    E(u_k, \epsilon, \delta) \coloneq \{x \in E \colon \nu_x \cap \aA(B_{l}(u_k, \epsilon) \setminus B_{l}(\cC_{\iota}, \delta) ) = \emptyset\}
  \end{align}
  and $\aA \colon \mathscr{P}(\P^2_{V\times\R}) \to \mathscr{P}(\P^n)$ is the axis-lifting operator.
  The constant $C_r >1$ in \eqref{eq:kak_isom1.1} is independent of $\epsilon$ and $\delta$.

  For simplicity, we write $K_{E, \epsilon, \delta}(\id, \iota)$ for the constructed $\epsilon$-isometry path $P$ from $\id$ to $\iota$.
\end{lemma}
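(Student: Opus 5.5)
The plan is to take for $P$ the zigzag path of \autoref{ssect:zigzagpath}, built from the segment $\LL_\varnothing$ joining $0$ to $\eta(\iota)$ inside a $2$-plane $W\subset V\times\R$ with $\P^1_W=l$. We run the construction with parameter $\epsilon$, so that all projective centers used lie in $B_{\P^2_{V\times\R}}(l,\epsilon)$. The families $\LL_k$ are the families $\LL_{\ii}$ with $\ii\in\max D$, refined to the small indexes $(\ii,\jj)$ when needed. Each $\LL_{\ii}$ consists of small isometries sharing a projective center $[y_{\ii}]$; by \eqref{eq:angle_y_W} this center is within $o_M(1)$ of the direction $\theta_{\ii}\in l$.

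The point $u_k$ is chosen from the elementary-interval structure, with two cases.

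\textbf{Case $\ii\in S$.} We use the trivial bound and ignore the choice of moved set. Pick any $u_k\in l\setminus\widebar B_l(\cC_\iota,\epsilon)$. By \eqref{eq:controlby_iota}, \autoref{lem:measisom} and \eqref{eq:isom_epsilon.2}, the swept measure of $E$ along $\LL_{\ii}$ is $\lesssim_r \epsilon_{g_1(\ii)}\HH^{n-1}(E)$. The good index $g_1(\ii)$ is shared by only boundedly many, in fact geometrically decaying, stopped descendants, whose total length is controlled by \eqref{eq:length_control}. Combined with $\sum_{\ii}\epsilon_{\ii}<\epsilon$, summing over $S$ gives $\lesssim C_r\epsilon\HH^{n-1}(E)$.

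\textbf{Case $\ii\in\max D\setminus S$.} By \eqref{eq:isom_epsilon.3}, the used directions $J_{\ii}$ fill all of $l$ except a set of length $<\epsilon$. We want $u_k$ such that $B_l(u_k,\epsilon)\setminus B_l(\cC_\iota,\delta)$ is the set the center $[y_{\ii}]$ must avoid. The idea is that a point $x\in E(u_k,\epsilon,\delta)$ has normal line $\nu_x$ missing $\aA(B_l(u_k,\epsilon)\setminus B_l(\cC_\iota,\delta))$. Since $\nu_x$ is a projective line and $\aA(l)$ is a hyperplane (property (3) of $\aA$), $\nu_x$ meets $\aA(l)$ in a point, which projects to some $w_x\in l$. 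We then select $u_k$ (essentially $\theta_{\ii}$ shifted by $\epsilon$) so that for $x$ in the moved set, $w_x$ is within $\lesssim\delta'$ of $\aA_{\iota_{\ii}}$, using that the center $[y_{\ii}]$ is $\epsilon$-close to $l$. The aim is to apply \autoref{lem:smallmeasisom} with a small parameter on all of $E(u_k,\epsilon,\delta)$, giving swept measure $\lesssim_r \delta\,\HH^1(\LL_{\ii})\HH^{n-1}(E)$. We also need $u_k\notin\widebar B_l(\cC_\iota,\epsilon)$. Here the construction gives $\theta_\varnothing=\cC_\iota$, and the directions move away from it after the first step; if this fails, we fall back to the $S$-type bound. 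Summing, and using $\sum_{\ii\in\max D}\HH^1(\LL_{\ii})\lesssim \abs{\eta(\iota)}/\prod\sin$, gives a contribution of order $C\delta\lesssim\epsilon$ after choosing the fineness parameters.

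\textbf{Main obstacle.} The hardest step is the geometric choice of $u_k$ in the second case, together with the correct direction of the inclusion. I expect the intended mechanism is instead to make $\LL_k$ \emph{unions over small indexes} $(\ii,\jj)$, each with direction $\theta_{\ii,\jj}$. One would then take $u_k=\theta_{\ii,\jj}$, so that points whose normals hit $\aA(B_l(\theta_{\ii,\jj},\epsilon))$ are moved with the $\delta$-good estimate of \autoref{lem:smallmeasisom}. Points whose normals miss it would be handled by the trivial bound, weighted by the length $\alpha_{\ii}$ from \eqref{eq:elementary_interval_length2} via \eqref{cond-beta}--\eqref{cond-gamma}. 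I would first try to reconcile the definition \eqref{eq:kak_isom1.2} with this bookkeeping, keeping the $\delta$-hole around $\cC_\iota$ to handle the initial segment direction. Then I would check that the implicit constant $C_r$ depends only on $r$ through \autoref{lem:measisom} and \autoref{lem:smallmeasisom}.
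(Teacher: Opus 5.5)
Your main case $\ii\in\max D\setminus S$ has a genuine gap, and it is exactly the inclusion direction you flag.

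By \eqref{eq:kak_isom1.2}, $E(u_k,\epsilon,\delta)$ consists of the points whose normal line \emph{avoids} $\aA(B_l(u_k,\epsilon)\setminus B_l(\cC_\iota,\delta))$. Every $\nu_x$ meets the hyperplane $\aA(l)$, so there is a point $w_x\in l$ with $\nu_x\cap\aA(w_x)\neq\emptyset$. For moved points, $w_x$ can lie anywhere in $l$ outside an arc of length about $2\epsilon$.

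So the moved set is co-small, and it has to be in order to get $\HH^{n-1}(E_p)=\HH^{n-1}(E)$ later. By contrast, $E^{\iota_\ii,\delta'}$ in \autoref{lem:smallmeasisom} is a thin piece of $E$. No choice of $u_k$ puts all of $E(u_k,\epsilon,\delta)$ inside it.

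Your fallback with $u_k=\theta_{\ii,\jj}$ has the same inversion. The points whose normals meet $\aA(B_l(\theta_{\ii,\jj},\epsilon))$ are precisely the ones that are \emph{not} moved. The moved points have $w_x$ at distance $\gtrsim\epsilon$ from the family's center, so per family you only get \autoref{lem:measisom}. That bound carries no factor $\alpha_\ii$. Summed family by family, it only gives (total length of $P$)$\cdot\HH^{n-1}(E)\ge\abs{\eta(\iota)}\HH^{n-1}(E)$. The $S$-type bound is not available either, since unstopped families are not short.

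The missing idea is to move a point along $\LL_\ii$ only when its dual point lies in the direction set $J_\ii$ already swept by the ancestors of $\ii$. Take $u_\ii$ to be the midpoint of the gap $l\setminus J_\ii$, which has length $<\epsilon$ by \eqref{eq:isom_epsilon.3}. Then $l\setminus J_\ii\subset B_l(u_\ii,\epsilon)$. Since $\nu_x$ meets $\aA(l)$, the set $E(u_\ii,\epsilon)=\{x\colon \nu_x\cap\aA(B_l(u_\ii,\epsilon))=\emptyset\}$ is contained in $E_{J_\ii}=\{x\colon\nu_x\cap\aA(J_\ii)\neq\emptyset\}$.

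The $\delta$-hole costs only $O(\delta)$. The sets $E(u_k,\epsilon,\delta)$ and $E(u_k,\epsilon)$ differ only inside $R=\{x\colon\nu_x\cap B_{\P^n}(\aA_\iota,\delta)\neq\emptyset\}$. The sweep of $R$ along $P$ is controlled by \autoref{lem:smallmeasisom} applied to $L^{\iota}$, together with \autoref{lem:smallneighborhood}.

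Now split by elementary intervals $J$:
\begin{itemize}
\item A point of $E_J$ is moved only along final families descending from the index $\kk$ at which $J$ first enters. There $J\subset[\theta_\kk,\theta_{\kk'}]$, so $\HH^1(J)\le\alpha_\kk$ by \eqref{eq:elementary_interval_length2}.
\item The sweep of $E_J$ along all these descendants \emph{at once} is compared, via \autoref{lem:smallneighborhood}, with a straight family whose center lies in $J$. Then \autoref{lem:smallmeasisom} with parameter $\HH^1(J)$ gives $\lesssim C_r\alpha_\kk\HH^1(\LL_{\kk'})\HH^{n-1}(E_J)$.
\item By \eqref{cond-beta}--\eqref{cond-gamma}, this is $\lesssim C_r\epsilon_{\kk'}\HH^{n-1}(E_J)$ in the good case, or $\lesssim C_r\epsilon_{g_1(\kk)}\HH^{n-1}(E_J)$ in the bad case.
\item For fixed $J$ these $\kk$'s are pairwise incomparable, so each term of $\sum_\ii\epsilon_\ii<\epsilon$ is used at most twice.
\item Finally, $\sum_J\HH^{n-1}(E_J)\le 2\HH^{n-1}(E)$. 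This step uses the reduction $N(E)\cap\P^{n-3}_{V^\perp}=\emptyset$ from \autoref{rmk:zero_isom_P_V^perp}.
\end{itemize}
This grouping over descendants, not any family-by-family estimate, is what makes the total cost $O(\epsilon)$.

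Your Case $\ii\in S$ is essentially fine, but the counting is simpler than you suggest. Indices sharing the same $g_1$ form a single chain $\mathbf{g}0^m$, so $S$ contains at most one of them.
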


\begin{proof}[Proof of \autoref*{lem:kak_isom_epsilon}]
  By considering the regular part of the rectifiable set, we can assume $E$ is a compact set with continuous normal vector field, and the normal line $n_x$ is well-defined for all $x \in E$.
  Also, since we only use isometries in $\Isom(V)$, from \autoref{rmk:zero_isom_P_V^perp}, we can assume 
  \begin{align}\label{eq:kak_isom1.3}
    N(E) \subset \P^{n-1} \setminus \P^{n-3}_{V^{\perp}}
  \end{align} 
  without loss of generality, where $N(E) = \{N(x)\colon x \in E\}$.

  From the construction described in \autoref{ssect:zigzagpath},  
  for $\epsilon >0$ and the projective line $l \in \GG^1(\P^2_{V\times\R})$ passes through $\cC_{\iota}$,
  we can connect the $\id$ and $\iota$ in $\Isom(V)$ by a zigzag path $P$ of the form
  \begin{align*}
      P= (\bigcup_{\ii \in S} \LL_{\ii}) \cup (\bigcup_{\ii \in (\max D) \setminus S} \LL_{\ii} ),
  \end{align*}
  where each $\LL_{\ii}$ contains a collection of segments corresponding to small pieces of the simple isometry $\iota_{\ii}$ 
  with the projective center $[\eta(\iota_{\ii})]$ lies near the projective line $l$.
  All the needed parameters $\{\gamma_\ii, \beta_\ii, \sigma_\ii, \epsilon_\ii, M_{\ii}\}$ for the zigzag iteration,  
  the directions $\{\theta_{\ii}\}_{\ii} \subset l$, the directional intervals $\{J_{\ii}\}$ related to $\{\LL_{\ii}\}$, 
  and the elementary intervals $\{J\}$ are determined.
  After relabeling the segments of $P$ by indices $k \in \N $, the polygonal isometry path has the form $P =\bigcup_k L_k \subset \Isom(V)$, 
  corresponding to a sequence of simple isometries $\{\iota_k\}_k$ whose composition is the given $\iota$.

  Next we determine the points $\{u_\ii \}_{\ii \in \max D} \subset l$.
  For $\ii \in S$, we choose $u_\ii \in l \setminus \widebar{B}_l(\cC_{\iota}, \epsilon)$ arbitrarily.
  For $\ii \in (\max D) \setminus S$ and $\delta < \epsilon/2$, from \eqref{eq:isom_epsilon.3} and the fact $\cC_{\iota} \in J_{\ii}$, we can choose $u_\ii \in l \setminus \widebar{B}_l(\cC_{\iota}, \delta)$ such that 
  \begin{align}\label{eq:isom_epsilon.4}
    l \setminus J_{\ii} \subset  B_{l}(u_\ii,\epsilon).
  \end{align}
  Again, after relabeling the indices, we get the points $\{u_k\}_k \subset l \setminus \widebar{B}_l(\cC_{\iota}, \epsilon)$ as stated in \eqref{eq:kak_isom_refine1.0}.

  Using the axis-lifting operator $\aA$ introduced in \autoref{def:center_axis_corrspd}, 
  each directional interval $J_{\ii} \subset l$ determines a set $\aA(J_{\ii}) = \spn_{\P^n} \{J_{\ii}, \P^{n-3}_{V^{\perp}\times \{0\}} \} \subset \P^n$.
  And the set $B_{l}(u_\ii,\epsilon) \subset l$ determines $\aA(B_{l}(u_\ii,\epsilon)) \subset \P^n$ which contains many projective axes.
  For any direction interval $J_{\ii}$, we define 
  \begin{align*}
    E_{J_{\ii}} \coloneq \{x \in E \colon \nu_x \cap \aA(J_{\ii}) \neq \emptyset\}.
  \end{align*}
  For any elementary interval $J \subset l$, the notation $E_J$ is defined similarly.
  More importantly, since $J$ is a closed interval, we know that $\aA(J)$ is a closed set in $\P^n$, 
  which, combining with the fact that the normal line to $E$ is continuously changed, leads to 
  \begin{align}\label{eq:compact_set}
    E_{J} = \{x \in E \colon \nu_x \cap \aA(J) \neq \emptyset\}\ \text{is a closed subset of}\ E.
  \end{align}
  Similar to \eqref{eq:kak_isom1.2}, we also define
  \begin{align} \label{eq:moved_set_2}
    E(u_k, \epsilon) \coloneq \{x \in E \colon \nu_x \cap \aA(B_{l}(u_k, \epsilon)) = \emptyset\},
  \end{align}
  which is a subset of $E(u_k, \epsilon, \delta)$ for all index $k$.

  Now we focus on the set covered in the procedure of moving $\{E(u_k, \epsilon, \delta)\}_k$ along $P = \bigcup_{k}\LL_k$,
  and our aim is to prove \eqref{eq:kak_isom1.1}.
  We can simplify the analysis by replacing $\{E(u_k, \epsilon, \delta)\}_k$ with $\{E(u_k, \epsilon)\}_k$ as defined above.
  For $\delta$ sufficiently small, we claim the following estimate
  \begin{align}\label{eq:1_1'}
    \abs{\bigcup_k \bigcup_{p \in \LL_k} p(E(u_k,\epsilon, \delta))} 
    \lesssim \abs{\bigcup_k \bigcup_{p \in \LL_k} p(E(u_k,\epsilon))} + \delta.
  \end{align}
  Actually, for $\delta < \epsilon$, by setting (recall that $\aA_{\iota} = \aA(\cC_{\iota})$)
  \begin{align*}
    R=\{x \in E \colon \nu_x \cap B_{\P^n}(\aA_{\iota}, \delta) \neq \emptyset\},
  \end{align*}
  then for all index $k$, we have
  \begin{align*}
    E(u_k,\epsilon,\delta) \bigtriangleup E(u_k,\epsilon) \subset R,
  \end{align*}
  where $A \bigtriangleup B = (A\setminus B) \cup (B\setminus A)$.
  From \autoref{lem:smallmeasisom}, the initial isometry $\iota$ acting on $R$ only covers a small set, i.e.,
  \begin{align*}
    \abs{\bigcup_{p \in L^{\iota}} p(R)} \lesssim  C_r \delta \abs{\eta(\iota)} \HH^{n-1}(R),
  \end{align*}
  where $L^{\iota}$ is the natural path from $\id$ to $\iota$, and $r$ comes from $E \subset B(0,r)$. 
  Recall that all fineness parameters can be chosen sufficiently large such that 
  the resulting zigzag path $P$ stays in a small neighborhood of the initial path $L^{\iota}$,
  which, combining with the small parameter $\delta$ and the small neighborhood lemma (see \autoref{lem:smallneighborhood}), we have 
  \begin{align*}
    \abs{\bigcup_{p \in P}p(R)} \lesssim \abs{\bigcup_{p \in L^{\iota}} p(R)} \lesssim C_r\delta \abs{\eta(\iota)} \HH^{n-1}(R) \lesssim_E \delta,
  \end{align*}
  where we use the fact $\HH^{n-1}(R) \le \HH^{n-1}(E) < \infty$. Also, for all $k$, we have
  \begin{align*}
    \abs{\bgg[v]{\bigcup_{p \in \LL_k} p(E(u_k,\epsilon, \delta))}-\bgg[v]{\bigcup_{p \in \LL_k} p(E(u_k,\epsilon))} } 
    \le \abs{\bigcup_{p \in \LL_{k}} p\bg{E(u_k,\epsilon, \delta) \bigtriangleup E(u_k,\epsilon)}}
    \le \abs{\bigcup_{p \in \LL_k} p(R)},
  \end{align*}
  which further leads to
  \begin{align*}
    \abs{\bigcup_k \bigcup_{p \in \LL_k} p(E(u_k,\epsilon, \delta))} 
    &\le \abs{\bigcup_k \bigcup_{p \in \LL_k} p(E(u_k,\epsilon))} + \abs{\bgg[v]{\bigcup_k \bigcup_{p \in \LL_k} p(E(u_k,\epsilon, \delta))}-\bgg[v]{\bigcup_k \bigcup_{p \in \LL_k} p(E(u_k,\epsilon))} }  \\
    &\le \abs{\bigcup_k \bigcup_{p \in \LL_k} p(E(u_k,\epsilon))} + \abs{\bigcup_{p \in P}p(R)} \lesssim \abs{\bigcup_k \bigcup_{p \in \LL_k} p(E(u_k,\epsilon))} + \delta,
  \end{align*}
  and proves \eqref{eq:1_1'}.
  
  Thus, from \eqref{eq:1_1'}, to prove \eqref{eq:kak_isom1.1}, it is sufficient to show 
  \begin{align}\label{eq:kak_isom1.1'}
    \bgg[v]{\bigcup_k \bigcup_{p \in \LL_k} p(E(u_k, \epsilon))} \lesssim C_r \HH^{n-1}(E) \epsilon,
  \end{align}
  provided that $\delta$ is chosen sufficiently small.

  \textbf{$\bullet$ Part 1:} 
  For the isometries along $\bigcup_{\ii \in S} \LL_{\ii}$, by using the sub-additivity of $\HH^n$ and \autoref{lem:measisom}, we have
  \begin{align*}
    \bgg[v]{\bigcup_{\ii \in S} \bigcup_{p \in \LL_{\ii}} p(E)} 
    \le \sum_{\ii \in S} \bgg[v]{\bigcup_{p \in \LL_{\ii}} p(E)} 
    \le  C_r \sum_{\ii \in S} \HH^1(\LL_{\ii}) \HH^{n-1}(E),
  \end{align*}
  where the second inequality follows from \autoref{lem:measisom} applied to each $\LL_{\ii}$ 
  (see also \eqref{eq:controlby_iota} in \autoref{ssect:zigzagpath}).
   From \eqref{eq:isom_epsilon.2}, we have
  \begin{align*}
    \sum_{\ii \in S} \HH^1(\LL_{\ii}) \HH^{n-1}(E)
    \le \sum_{\ii \in S} \epsilon_{g_1(\ii)} \HH^{n-1}(E).
  \end{align*}
  Since all indices in $S$ belong to different family lines (otherwise it would contradict the local stopping condition of $S$), 
  we have $g_1(\ii) \neq g_1(\jj)$ for different $\ii, \jj \in S$, 
  which further leads to $\sum_{\ii \in S}\epsilon_{g_1(\ii)} \le \sum_{\ii \in D}\epsilon_{\ii} < \epsilon$.
  Combining all these estimates, we have
  \begin{align}\label{eq:isom_epsilon.4.1 part1}
    \bgg[v]{\bigcup_{\ii \in S} \bigcup_{p \in \LL_{\ii}} p(E)}  \le C_r\HH^{n-1}(E)\epsilon,
  \end{align}
  which shows that all isometries along $\bigcup_{\ii \in S} \LL_{\ii}$ only cover a set of measure controlled by $C_r\HH^{n-1}(E)\epsilon$. 

  \textbf{$\bullet$ Part 2:} 
  For the motion along $\bigcup_{\ii \in (\max D) \setminus S} \LL_{\ii}$, to prove the estimate \eqref{eq:kak_isom1.1'},
  we first enlarge the moved set \eqref{eq:moved_set_2}  by showing
  \begin{align*}
    E(u_{\ii}, \epsilon) = \{x \in E \colon \nu_x \cap \aA(B_{l}(u_\ii,\epsilon)) = \emptyset\} \subset \{x \in E \colon \nu_x \cap \aA(J_{\ii}) \neq \emptyset\} = E_{J_{\ii}}
  \end{align*} 
  for all $\ii \in (\max D)\setminus S$.
  Otherwise, there exists some $x \in E$ satisfying 
  both $\nu_x \cap \aA(B_{l}(u_\ii,\epsilon)) = \emptyset$ and $ \nu_x \cap \aA(J_{\ii}) = \emptyset$,
  which, combining with \eqref{eq:isom_epsilon.4}, leads to $\nu_x \cap \aA(l) = \emptyset$.
  However, this is impossible since $\aA(l) \in \GG^{n-1}(\P^n)$ 
  and $\nu_x \in \GG^1(\P^n)$ are projective subspace in $\P^n$ with dimension $(n-1)$ and 1, respectively, 
  and they must have nonempty intersection.
  Now we have 
  \begin{align}\label{eq:isom_epsilon.5}
    \bgg[v]{\bigcup_{\ii \in (\max D) \setminus S} \bigcup_{p \in \LL_{\ii}} p (E(u_{\ii},\epsilon))} \le \bgg[v]{\bigcup_{\ii \in (\max D) \setminus S} \bigcup_{p \in \LL_{\ii}} p(E_{J_{\ii}})}.
  \end{align} 

  Next we use the elementary intervals $\{J\}$ to decompose the right-hand side of \eqref{eq:isom_epsilon.5}.
  For an elementary interval $J \subset l$, 
  notice that 
  \begin{align}\label{eq:isom_epsilon.6}
    \bigcup\limits_{\ii \in (\max D) \setminus S \colon J_{\ii} \supset J} \bigcup_{p \in \LL_{\ii}} p(E_J)
  \end{align}
  is the covered set related to the elementary interval $J$, we can control \eqref{eq:isom_epsilon.5} by
  \begin{equation}
    \begin{aligned}\label{eq:isom_epsilon.7}
      \bgg[v]{\bigcup_{\ii \in (\max D) \setminus S} \bigcup_{p \in \LL_{\ii}} p(E_{J_{\ii}})} 
      =\bgg[v]{\bigcup_{J} \bigcup_{\ii \in (\max D) \setminus S \colon J_{\ii} \supset J } \bigcup_{p \in \LL_{\ii}} p(E_J)}
      \le \sum_{J}\sum_{\ii \in (\max D) \setminus S \colon J_{\ii} \supset J } \abs{\bigcup_{p \in \LL_{\ii}} p(E_J)},
    \end{aligned}
  \end{equation}
  which leads us to estimate $\abs{\bigcup_{p \in \LL_{\ii}} p(E_J)}$ for $\ii \in (\max D) \setminus S$ and $J \subset J_\ii$.

  First we consider a simpler case: the elementary interval $J \subset [\theta_{\ii}, \theta_{\ii'}] \subset l$ for some $\ii$.
  Then the estimate \eqref{eq:elementary_interval_length2} leads to 
  \begin{align*}
    \HH^1(J) \le \alpha_{\ii},
  \end{align*}
  where $\alpha_{\ii}$ is $\gamma_{\mathbf{i'}}$ for good $\ii$ and $\beta_{\mathbf{i'}}$ for bad $\ii$, see \eqref{eq:alpha_angle}.
  Recall that $E_J$ is compact, see \eqref{eq:compact_set} above.  
  Let $\eta = \HH^1(J) \HH^1(\LL_{\mathbf{i'}})\HH^{n-1}(E_J)$, 
  then \autoref{lem:smallneighborhood} shows that there exists a positive number $\delta(\eta) >0$ such that 
  \begin{align}\label{eq:isom_epsilon.8}
    \abs{\bigcup_{p \in \LL} p(E_J)} \lesssim \abs{\bigcup_{p \in \LL'} p(E_J)} + \eta, \quad \forall \LL \subset B_{\Isom(V)}(\LL', \delta(\eta)),
  \end{align}
  where $\LL'$ and $\LL$ are polygonal isometry paths in $\Isom(V)$.
  Recall the Venetian blind iteration procedure from $\LL_{\ii'}$ to $\LL_{\ii}$ described in \autoref{ssect:Vb},
  by choosing the fineness parameter $M_\mathbf{i'}$ sufficiently large, 
  we can find a family of line segments denoted by $\LL(\theta)$, which corresponds to small isometries with projective center $\theta \in J$,  
  such that 
  \begin{align*}
    \HH^1(\LL_{\ii}) \le \HH^1(\LL(\theta)) \le \HH^1(\LL_{\mathbf{i'}}), \quad \LL_\ii \subset B_{\Isom(V)}(\LL(\theta), \delta(\eta)).
  \end{align*}
  Combining with \autoref{lem:smallmeasisom}, 
  moving $E_J$ along $\LL(\theta)$ covers a set of estimate 
  \begin{align}\label{eq:isom_epsilon.9}
    \abs{\bigcup_{p \in \LL(\theta)} p(E_J)} \lesssim C_r \HH^1(J) \HH^1(\LL_{\mathbf{i'}})\HH^{n-1}(E_J),
  \end{align}
  where the implicit constant is independent of the choice of elementary interval $J$.
  Combining \eqref{eq:isom_epsilon.8} (with $\LL=\LL_{\ii}, \LL'=\LL(\theta)$), \eqref{eq:isom_epsilon.9} and $\HH^1(J) \le \alpha_{\ii}$, we have 
  \begin{align}\label{eq:isom_epsilon.10}
    \abs{\bigcup_{p \in \LL_{\ii}} p(E_J)} \lesssim C_r \alpha_{\ii} \HH^1(\LL_{\ii'}) \HH^{n-1}(E_J),
  \end{align}
  for the case $J \subset [\theta_{\ii}, \theta_{\ii'}] \subset l$.
  
  Next we consider the general case: $\ii \in (\max D) \setminus S$ and $J \subset J_\ii$.
  For a given pair $(\ii, J)$, 
  let $\kk=\kk(\ii,J) \preceq \ii$ be a big index before $\ii$ such that $J_{\kk}\setminus J_{\kk'} =[\theta_{\kk},\theta_{\kk'}] \supset J$,
  then from \eqref{eq:isom_epsilon.10}, one concludes that 
  \begin{align*}
    \abs{\bigcup_{p \in \LL_{\kk(\ii,J)}} p(E_J)} \lesssim C_r \alpha_{\kk(\ii,J)}\HH^1(\LL_{\kk(\ii,J)'})\HH^{n-1}(E_J).
  \end{align*}
   We can further simplify this estimate.
   If $\kk(\ii,I)$ is good, then $\alpha_{\kk(\ii,J)} = \gamma_{\kk(\ii,J)'}$.
   Combining with the condition \eqref{cond-gamma} on the parameters given in \autoref{ssect:zigzagpath},
   one concludes that 
   \begin{align}\label{eq:isom_epsilon.11}
    \abs{\bigcup_{p \in \LL_{\kk(\ii,J)}} p(E_J)} \lesssim C_r \epsilon_{\kk(\ii,J)'}\HH^{n-1}(E_J).
   \end{align}
   Similarly, if $\kk(\ii,J)$ is bad, then $\alpha_{\kk(\ii,J)} = \beta_{\kk(\ii,J)'}$, and the condition \eqref{cond-beta_decreasing} on family $\{\beta_{\ii}\}_{\ii \in \{0,1\}^*}$ guarantees $\beta_{\kk(\ii,J)'} = \beta_{g_1(\kk(\ii,J))}$.
   Combining with \eqref{cond-beta},
   we have 
  \begin{align}\label{eq:isom_epsilon.12}
    \abs{\bigcup_{p \in \LL_{\kk(\ii,J)}} p(E_J)} \lesssim C_r \epsilon_{g_1(\kk(\ii,J))}\HH^{n-1}(E_J).
  \end{align}
  By taking $\{\kk(\ii,J)\}_{\ii \in (\max D) \setminus S}$ in different family lines, 
  all $\kk(\ii,J)$ appeared in \eqref{eq:isom_epsilon.11} can be chosen to be different. 
  Similarly, all $g_1(\kk(\ii,J))$ appeared in \eqref{eq:isom_epsilon.12} can be chosen to be different.  
  Applying \eqref{eq:isom_epsilon.11} and \eqref{eq:isom_epsilon.12} to \eqref{eq:isom_epsilon.6}, we have
  \begin{align*}
    \bgg[v]{\bigcup\limits_{\ii \in (\max D) \setminus S \colon J_{\ii} \supset J } \bigcup_{p \in \LL_{\ii}} p(E_J)}
    &\lesssim  C_r \bgg{\sum_{\text{good}\ \kk(\ii,J)} \epsilon_{\kk(\ii,J)'}\HH^{n-1}(E_J) + \sum_{\text{bad}\ \kk(\ii,J)} \epsilon_{g_1(\kk(\ii,J))}\HH^{n-1}(E_J)} \\
    &\lesssim C_r \HH^{n-1}(E_J) \sum_{\ii} \epsilon_{\ii}
    \lesssim C_r \HH^{n-1}(E_J) \epsilon.
  \end{align*}
  Combining this with \eqref{eq:isom_epsilon.5} and \eqref{eq:isom_epsilon.7}, we have 
  \begin{align*}
    \bgg[v]{\bigcup_{\ii \in (\max D) \setminus S} \bigcup_{p \in \LL_{\ii}} p(E(u_{\ii},\epsilon))}
     \lesssim  C_r \sum_{J} \HH^{n-1}(E_J) \epsilon.
  \end{align*} 
  Recall the assumption $N(E) \subset \P^{n-1} \setminus \P^{n-3}_{V^\perp}$ stated in \eqref{eq:kak_isom1.3}, 
  we know that each $x \in E$ belongs to at most two sets of $\{E_J\}_J$,
  which leads to 
  \begin{align*}
    \sum_{J} \HH^{n-1}(E_J) \le 2\HH^{n-1}(E).
  \end{align*}  
  Hence, we have
  \begin{align}\label{eq:isom_epsilon.13}
    \bgg[v]{\bigcup_{\ii \in (\max D) \setminus S} \bigcup_{p \in \LL_{\ii}} p(E(u_{\ii},\epsilon))}
     \lesssim 2C_r \HH^{n-1}(E_{\bigcup\limits_{J} J}) \epsilon \lesssim_r \HH^{n-1}(E) \epsilon,
  \end{align}
  which, combining with \eqref{eq:isom_epsilon.4.1 part1}, leads to \eqref{eq:kak_isom1.1'} and therefore proves \eqref{eq:kak_isom1.1}. 
\end{proof}

Now we focus on \autoref{thm:local_Kak_isom} and show how to move $\HH^{n-1}$-a.e. of a given $(n-1)$-rectifiable set $E$ to an isometric copy $\iota(E)$ for any $\iota \in \Isom(V)$ such that the covered set has $\HH^n$ measure zero.
The main idea is to iterate the $\epsilon$-isometry path established in \autoref{lem:kak_isom_epsilon} on different scales $\{\epsilon_k\}_{k=1}^{\infty}$ and non-decreasing bounded sets $\{E_k\}_{k=1}^{\infty}$ 
satisfying $\lim\limits_{k \to \infty} \epsilon_k = 0$ and $\lim\limits_{k \to \infty}E_k = E$, respectively. 
And the moved sets are carefully chosen such that they converge to the final moved sets, which cover a Lebesgue-null set in total.
To make the final moved sets have the same $\HH^{n-1}$ measure as $E$, 
we need to do more analysis.

\begin{proof}[\textbf{Proof of \autoref*{thm:local_Kak_isom}}]
To show that $E$ is $\Isom(V)$-Kakeya movable, for any given $\iota \in \Isom(V)$, 
we need to find a continuous path $P_{\iota} \subset \Isom(V)$ from $\id$ to $\iota$, 
and sets $\{E_p\}_{p \in P_{\iota}}$ to prove \eqref{eq:proof_local_kak_isom.1} and \eqref{eq:proof_local_kak_isom.2}.
For all $p \in P_{\iota}$, we construct the set as $E_p = \{x\in E \colon \nu_x \cap \aA_p = \emptyset\}$, 
where $\{\aA_p\}_{p \in P_{\iota}} \subset \GG^{n-2}(\P^n)$ are projective axes to be determined.
 
  Without loss of generality, we can assume that $E$ is a single embedded $C^1$ hypersurface, so that the normal line $n_x$ varies continuously on $E$.
  And the condition \eqref{eq:kak_isom1.3} still holds.
  The key point is that we always start by working on simple rotations.
  If $\iota = \tau_z$ is a translation, then we decompose it as $\iota = \iota_2 \circ \iota_1$ with $\iota_1,\iota_2$ being simple rotations, 
  and then work on $\iota_1,\iota_2$ respectively.
  Thus, we can further assume $\iota  = \RR^{\phi}_{z,V}$ is a simple rotation in the following discussions.
  
  Let $\{r_k\}_{k \ge 1}$ be positive numbers, and $\{E_k\}_{k \ge 1}$ be a non-decreasing sequence of compact sets with $E_k \subset B(0,r_k)$ and $\HH^{n-1}(E_k) <\infty$, 
  such that $E = \bigcup_{k\ge 1} E_k$, and each $E_k$ has a continuous normal map.
  Let $N$ be the continuous normal map on $\bigcup_k E_k$.
  We will construct the needed isometry path in $\Isom(V)$ and the moved sets, by iterating \autoref{lem:kak_isom_epsilon}.

  Since $\iota = \RR^{\phi}_{z,V}$ itself is a simple rotation (as we assumed without loss of generality), 
  we chose $P^0\subset \Isom(V)$ to be the natural path from $\id$ to $\iota$, 
  which corresponds to the transformations $\{\RR^{t\phi}_{z,V}\}_{t \in [0,1]}$.
  We can choose a sequence of positive numbers $\{\epsilon_k\}_{k \ge 1}$, such that 
  \begin{align}\label{eq:proof_local_kak_isom.3}
    \sum_{k \ge 1} \HH^{n-1}(E_k)\epsilon_k < \infty.
  \end{align}
  This is always possible, for example, by taking $\epsilon_k = \frac{1}{k^2 \HH^{n-1}(E_k)}$.
  Moreover, we have $\lim\limits_{k \to \infty} \epsilon_k =0$.
  We will apply \autoref{lem:kak_isom_epsilon} to the line segment $P^0$ to obtain a new path $P^1$.
  Repeat this procedure on each line segment to obtain a family of polygonal isometry paths $\{P^k\}_{k \in \N}$,
  where $\epsilon_k$ controls the Lebesgue measure of the set covered by moving subsets of $E_k$ along $P^k$.              
  For simplicity, we use $A^k$ for the index set of line segments in $P^k$ and denote 
  \begin{align*}
    P^k = \bigcup_{i \in A^k} L^k_i,
  \end{align*} 
  where $L^k_i$ is the $i$-th line segment of the polygonal isometry path $P^k \subset \Isom(V)$.                                                                                                          

  \textbf{$\bullet$ Iteration details:}
  Now we describe the iterations of \autoref{lem:kak_isom_epsilon}.
  Roughly speaking, for each $L \subset P^{k-1}$, we can choose a projective line $l_L \in \GG^1(\P^2_{V\times \R})$ and small numbers $\{\epsilon_L, \delta_L\}$ satisfying $\delta_L \in (0, \epsilon_L/2)$.
  Then we use \autoref{lem:kak_isom_epsilon} on $E_k$ to replace $L$ by $K_{E_k,\epsilon_L, \delta_L}(L)$, 
  whose line segments are associated with points on $l_L$ to determine moved sets of the type \eqref{eq:kak_isom1.2}.
  The parameters $\{\delta_L\}$ are carefully determined later.

  For the starting step $P^0\rightarrow P^1$, we associate $P^0$ with a positive number $\epsilon^0_0  < \epsilon_1$ and a sufficiently small number $\delta^0_0 \in (0,\epsilon^0_0/2)$,
  and use \autoref{lem:kak_isom_epsilon} (with a projective line $l^0$) on $E_1$ and $P^0$ to construct the zigzag isometry path $K_{E_1,\epsilon^{0}_0, \delta^0_0}(P^0) \eqcolon P^1$.
  For $k\ge 1$, suppose the path $P^{k-1}=\bigcup_{i \in A^{k-1}} L^{k-1}_i$ and the number $\epsilon_k >0$ have been constructed, 
  we show the construction of $P^{k-1} \rightarrow P^k$ as the following steps.
  We assign positive numbers $\{\epsilon^{k-1}_i, \delta^{k-1}_i\}$ (here the superscript $k-1$ stands for path $P^{k-1}$) for each line segment $L^{k-1}_i \subset P^{k-1}$ such that 
  \begin{align}
    &\delta^{k-1}_i \in (0, \epsilon^{k-1}_i/2), \label{eq:delta_conditon} \\
    &C_{r_k} \bgg{\sum_{i \in A^{k-1}} \epsilon^{k-1}_i} < \epsilon_k, \label{eq:proof_local_kak_isom.4}
  \end{align}
  where $C_{r_k} >1$ is the constant in \eqref{eq:kak_isom1.1} of \autoref{lem:kak_isom_epsilon} corresponding to the set $E_k \subset B(0,r_k)$.
  Next we apply \autoref{lem:kak_isom_epsilon} (with a projective line $l_i^k$) to every line segment $L^{k-1}_i \subset P^{k-1}$ 
  and replace $L^{k-1}_i$ with a zigzag path $K_{E_k,\epsilon^{k-1}_i, \delta^{k-1}_i}(L^{k-1}_i) = \bigcup_{j \in J^{k-1}_i} L_j^k$,
  where, for $i \in J_{i}^{k-1}$, each line segment $L_j^k$ is associated with a direction $u_j^k \in l_i^k$ 
  and a set
  \begin{align}\label{eq:proof_local_kak_isom.4.set}
    E_k(u_j^k, \epsilon^{k-1}_i, \delta^{k-1}_i) = \{x \in E_k \colon \nu_x \cap \aA(B_{l_i^k}(u_j^k, \epsilon^{k-1}_i) \setminus B_{l_i^k}(\cC_{\iota}, \delta^{k-1}_i)) = \emptyset\}.
  \end{align}
  Then \autoref{lem:kak_isom_epsilon} leads to
  \begin{align}\label{eq:proof_local_kak_isom.5}
   \bgg[v]{\bigcup_{j \in J^{k-1}_i} \bigcup_{p \in L_j^k} p(E_k(u_j^k, \epsilon^{k-1}_i, \delta^{k-1}_i))} \lesssim C_{r_k} \epsilon^{k-1}_i \HH^{n-1}(E_k)
  \end{align}
  for all $i \in A^{k-1}$.
  By collecting all the newly constructed zigzags and relabeling the line segments, we obtain the path $P^k$ as
  \vspace{-0.5\baselineskip}
  \begin{align}\label{eq:proof_local_kak_isom.6}
    P^k = \bigcup_{i \in A^{k-1}} K_{E_k,\epsilon^{k-1}_i, \delta^{k-1}_i}(L_i^{k-1}) \eqcolon \bigcup_{j \in A^k} L_j^k,
  \end{align}
  where the index set is $A^k = \bigcup_{i \in A^{k-1}} J^{k-1}_i$.
  
  Then we focus on the set covered by moving along $P^k$.
  For each $j \in A^k$, the line segment $L_j^k \subset P^k$ has been associated with a direction $u_j^k \in l_{i(j)}^k$,
  where for $j \in J_{i}^{k-1}$, the projective line is $l_{i(j)}^k = l_i^k$ as appeared in the procedure of $L^{k-1}_i \rightarrow \bigcup_{j \in J^{k-1}_i} L_j^k$ described above. 
  We define an open subset
  \vspace{-0.5\baselineskip}
  \begin{align}\label{eq:proof_local_kak_isom.7}
    E_j^k \coloneq \{x \in E_k \colon \nu_x \cap \aA(\widebar{B}_{l_{i(j)}^k}(u_j^k, \epsilon_k)) = \emptyset\} \subset E_k
  \end{align}
  as the moved set along $L_j^k$, where $\widebar{B}_{l_{i(j)}^k}(u_j^k, \epsilon_k)$ is the closure of $B_{l_{i(j)}^k}(u_j^k, \epsilon_k)$ with respect to the natural topology of $l_{i(j)}^k \cong P^1$.
  Then $E_j^k$ is a closed subset of $E_k$ (recall that $\nu_x$ is continuously changed, and $\aA(\widebar{B}_{l_{i(j)}^k}(u_j^k, \epsilon_k))$ is closed in $\P^n$).
  It is easy to see that $E_j^k \subset E_k(u_j^k, \epsilon^{k-1}_i, \delta^{k-1}_i)$ since $\epsilon_k > \epsilon^{k-1}_i$,
  which, together with \eqref{eq:proof_local_kak_isom.5}, leads to
  \vspace{-0.5\baselineskip}
  \begin{align*}
    \bgg[v]{\bigcup_{j \in A^k} \bigcup_{p \in L_j^k} p(E_j^k)} 
    \le \bgg[v]{\bigcup_{i \in A^{k-1}}\bigcup_{j \in J^{k-1}_i} \bigcup_{p \in L_j^k} p(E_k(u_j^k, \epsilon^{k-1}_i, \delta^{k-1}_i))}
    \lesssim \sum_{i \in A^{k-1}} C_{r_k} \epsilon^{k-1}_i \HH^{n-1}(E_k).
  \end{align*}
  Thus, applying \eqref{eq:proof_local_kak_isom.4}, this leads to an estimate of the set covered by moving $\{E^k_j\}_{j \in A^k}$ along $P^k$:
  \vspace{-0.5\baselineskip}
  \begin{align}\label{eq:proof_local_kak_isom.8}
     \bgg[v]{\bigcup_{j \in A^k} \bigcup_{p \in L_j^k} p(E_j^k)}  \lesssim \epsilon_k \HH^{n-1}(E_k).
  \end{align}
 
  By now, starting from $P^{k-1}$, we have obtained a finer path $P^k$, 
  the moved sets $\{E_j^k\}_{j \in A^k}$ and an estimate of the covered set, i.e., \eqref{eq:proof_local_kak_isom.6}-\eqref{eq:proof_local_kak_isom.8}. 
  
  \textbf{$\bullet$ Proof of estimate \eqref{eq:proof_local_kak_isom.1}:}
  Next, we take the limit of the above results to obtain the final path and the moved sets needed for our aim in \eqref{eq:proof_local_kak_isom.1}.
  
  First, we determine the final path. 
  In the construction of $P^{k-1} \rightarrow P^k$, all the fineness used in the zigzags can be chosen sufficiently large 
  to make $P^{k}$ arbitrarily close to $P^{k-1}$.
  Then $\{P^k\}_{k \ge 1}$, as a family of continuous isometry paths in $\Isom(V)$, actually forms a Cauchy sequence.
  Thus, we can find a continuous limit path $P_{\iota} \subset \Isom(V)$ connecting $\id$ and $\iota$ such that 
  \begin{align*}
    \lim_{k \to \infty} P^k = P_{\iota}.
  \end{align*}

  Now, we carefully determine the moved set on each step.
  Fix an integer $k \ge 1$, the construction $P^{k-1} \rightarrow P^k$ gives inequality \eqref{eq:proof_local_kak_isom.8}.
  For the next construction $P^k \rightarrow P^{k+1}$, which replace $L_i^k$ with $\bigcup_{j \in J^k_i}L_j^{k+1}$,
  the related fineness parameters are sufficiently large and $\bigcup_{j \in J^k_i}L_j^{k+1}$ stays in a small neighborhood of $L_i^k$.
  Since each $E_i^k$ is compact and $\bigcup_{j \in J^k_i}L_j^{k+1}$ can be regarded as a perturbation of $L_i^k$, 
  we can use \autoref{lem:smallneighborhood} on \eqref{eq:proof_local_kak_isom.8} to obtain
  \begin{align}\label{eq:proof_local_kak_isom.8.1}
    \bgg[v]{\bigcup_{i \in A^k} \bigcup_{j \in J^k_i} \bigcup_{p \in L_j^{k+1}} p(E_i^k)}  \lesssim \epsilon_k \HH^{n-1}(E_k).
  \end{align}
  By introduce an index $i(p,k,k+1) \in A^k$ for each isometry $p \in P^{k+1}$, the above inequality can be restated as
  \begin{align*}
    \bgg[v]{\bigcup_{p \in P^{k+1}} p (E_{i(p,k,k+1)}^k)}  \lesssim \epsilon_k \HH^{n-1}(E_k).
  \end{align*}
  Using similar analysis for the following constructions $P^{k+1} \rightarrow P^{k+2},\ P^{k+2} \rightarrow P^{k+3}, \cdots$, 
  and for any integer $s \ge 1$, we can naturally define $i(p,k,k+s) \in A^k$ for $p \in P^{k+s}$ such that   
  \begin{align}\label{eq:proof_local_kak_isom.9}
    \bgg[v]{\bigcup_{p \in P^{k+s}} p (E_{i(p,k,k+s)}^k)}  \lesssim \epsilon_k \HH^{n-1}(E_k),
  \end{align}
  which is an estimate of moving ``$E_k$'' along $P^{k+s}$.
  By taking $s \to \infty$ and define $i(p,k)\coloneq i(p,k,\infty)$ for $p \in P_{\iota} =\lim\limits_{k \to \infty} P^k$, \eqref{eq:proof_local_kak_isom.9} implies 
  \begin{align}\label{eq:proof_local_kak_isom.10}
    \bgg[v]{\bigcup_{p \in P_{\iota}} p (E_{i(p,k)}^k)}  \lesssim \epsilon_k \HH^{n-1}(E_k),
  \end{align}
  for all $k \ge 1$, which controls the covered set of moving ``$E_k$'' along the final path $P_{\iota}$.
  The index $i(p,k)$ is used to show that for the final zigzag path $P_{\iota}$, 
  we can move ``$E_k$'' (actually $\{E_{i(p,k)}^k\}_{p \in P_{\iota}}$) and make the covered set still holds a desired measure estimate similar to \eqref{eq:proof_local_kak_isom.8}.

  After all these preparations, we are ready to determine the set $E_p$ used in \eqref{eq:proof_local_kak_isom.1}.
  For all $p \in P_{\iota}$, define the moved set as
  \begin{align}\label{eq:proof_local_kak_isom.11}
    E_p \coloneq \varlimsup_{k \to \infty} E_{i(p,k)}^k = \bigcap_{k\ge1}\bigcup_{m\ge k} E_{i(p,m)}^m,
  \end{align}
  which is a Borel set ($G_{\delta\sigma}$ set) and hence $\HH^{n-1}$-measurable.
  For all $k \ge 1$, a simple calculation shows
  \begin{align*}
    \bgg[v]{\bigcup_{p \in P_{\iota}} p(E_p)}
    \le \bgg[v]{\bigcup_{p \in P_{\iota}} p (\bigcup_{m\ge k} E_{i(p,m)}^m)} 
    \le \sum_{m \ge k}\bgg[v]{\bigcup_{p \in P_{\iota}} p (E_{i(p,m)}^m)}
    \lesssim \sum_{m \ge k} \epsilon_m \HH^{n-1}(E_m),
  \end{align*}
  where the last inequality follows from \eqref{eq:proof_local_kak_isom.10}.
  Applying \eqref{eq:proof_local_kak_isom.3}, we have 
  \begin{align}\label{eq:proof_local_kak_isom.12}
    \bgg[v]{\bigcup_{p \in P_{\iota}} p(E_p)} \lesssim \sum_{m \ge k} \epsilon_m \HH^{n-1}(E_m) \xrightarrow{k \rightarrow \infty} 0,
  \end{align}
  which is of the desired type as estimate \eqref{eq:proof_local_kak_isom.1}.

  \textbf{$\bullet$ Proof of equation \eqref{eq:proof_local_kak_isom.2}:} 
  Applying \eqref{eq:proof_local_kak_isom.7} to \eqref{eq:proof_local_kak_isom.11}, we have
  \begin{align}\label{eq:proof_local_kak_isom.12.1}
    E_p = \varlimsup_{k \to \infty} \{x \in E_k \colon \nu_x \cap \aA(\widebar{B}_{l_j^k}(u_j^k, \epsilon_k)) = \emptyset\}.
  \end{align}
  By taking the complement with respect to $E = \bigcup_k E_k$, this gives
  \begin{align*}
    E_p^{\mathsf{c}} 
    &= E \setminus \Big[\varlimsup_{k \to \infty} \{x \in E_k \colon \nu_x \cap \aA(\widebar{B}_{l_{i(p,k)}^k}(u_{i(p,k)}^k, \epsilon_k)) = \emptyset\} \Big]  \\
    &= \varliminf_{k \to \infty} \{x \in E_k \colon \nu_x \cap \aA(\widebar{B}_{l_{i(p,k)}^k}(u_{i(p,k)}^k, \epsilon_k)) \neq \emptyset\} \\
    &\subset \{x \in E \colon \nu_x \cap \aA(\varliminf_{k \to \infty} \widebar{B}_{l_{i(p,k)}^k}(u_{i(p,k)}^k, \epsilon_k)) \neq \emptyset\},
  \end{align*}
  where the last step uses the properties of the operator $\aA$ (see \autoref{def:center_axis_corrspd} and the discussions there).

  Since $\{\widebar{B}_{l_{i(p,k)}^k}(u_{i(p,k)}^k, \epsilon_k)\}_k$ is a family of compact projective segments with $\lim_{k \to \infty} \epsilon_k =0$ (see \eqref{eq:proof_local_kak_isom.3}),
  we know that $\varliminf_{k \to \infty} \widebar{B}_{l_{i(p,k)}^k}(u_{i(p,k)}^k, \epsilon_k)$ has at most one point.
  We denote $u_p \in \P^2_{V\times \R}$ for this limit inferior (when it's an empty set, we take $u_p$ as an arbitrary point in $\P^2_{V\times \R}$).
  Then we have 
  \begin{align}\label{eq:proof_local_kak_isom.12.2}
    E_p^{\mathsf{c}}  \subset \{x \in E \colon \nu_x \cap \aA(u_p) \neq \emptyset\}.
  \end{align}
  Let $E(u) \coloneq \{x \in E \colon \nu_x \cap \aA(u) \neq \emptyset\}$ for $p \in \P^2_{V\times \R}$.
  To prove \eqref{eq:proof_local_kak_isom.2}, we only need to show 
  \begin{align}\label{eq:proof_local_kak_isom.13}
    \HH^{n-1}(E(u_p)) =0, \quad \forall p \in P_{\iota}. 
  \end{align}
  This is possible for some carefully selected points $\{u_p\}_{p \in P_{\iota}}$, which we explain as the following.

  Consider the set $A \coloneq \{u \in \P^2_{V\times\R} \colon \HH^{n-1}(E(u))>0\}$, 
  which we claim has at most countable points.
  Let's consider the intersection $E(u)\cap E(u')$ for different $u,u' \in \P^2_{V\times \R}$.
  We consider the intersection in two cases:
  \begin{enumerate}[label=(\arabic*)]
    \item If $u,u' \in \P^1_{V\times\{0\}}$, then $\aA(u), \aA(u') \subset \P^{n-1}_{\infty}$ and they satisfy $\aA(u)\cap \aA(u') = \P^{n-3}_{V^{\perp}\times \{0\}}$.
          Since $\nu_x  \cap \P^{n-1}_{\infty}$ is exactly the projective normal direction $N(x)$, we have 
          \begin{align*}
            E(u)\cap E(u') = \{x \in E \colon \nu_x \cap (\aA(u) \cap \aA(u')) \neq \emptyset \} = N^{-1}(\P^{n-3}_{V^{\perp}}).
          \end{align*}
          Together with the assumption \eqref{eq:kak_isom1.3}, this set must be empty.
    \item If at least one of the points belongs to $\P^2_{V\times \R} \setminus \P^1_{V\times \{0\}}$, 
          then $\spn_{\P^n} \{\aA(u),\aA(u')\} = \P^{n-1}_{Y \times \R}$ for some affine subspace $Y \in \AA^{n-1}(\R^n)$ determined by $u,u'$ and $V^{\perp}$.
          Since any two different projective points determine a projective line, we have (recall that $\nu_x = [n_x:1]$)
          \begin{align*}
            E(u)\cap E(u') \subset \{x \in E \colon \nu_x \subset \spn_{\P^n} \{\aA(u),\aA(u')\} \} = \{x \in E \colon n_x \subset Y\},
          \end{align*}
          where the first inclusion uses the assumption \eqref{eq:kak_isom1.3}.
          The righthand side corresponds to a subset of $E$ which is transversally intersected by $Y$, and hence is $\HH^{n-1}$-null.
  \end{enumerate}
  These two cases imply that $\{E(u)\}_{u \in A}$ is a family of positive $\HH^{n-1}$-measure subsets whose pairwise intersections are $\HH^{n-1}$-null.
  Since $E \in \Rect$ has $\sigma$-finite $\HH^{n-1}$-measure, it admits at most countably many subsets of positive $\HH^{n-1}$-measure whose pairwise intersections are $\HH^{n-1}$-null.
  Hence the index set $A =\{u \in \P^2_{V\times\R} \colon \HH^{n-1}(E(u))>0\}$ is at most countable, which we denote by $A = \{u_k\}_{k \in \N}$.
  Next, using condition \eqref{eq:kak_isom_refine1.0} from \autoref{lem:kak_isom_epsilon}, 
  we can select the projective lines $\{l^k_i\}$ and parameters $\{\delta^k_i\}$ in \eqref{eq:delta_conditon} to guarantee that
  \begin{align*}
    u_p \not\in \{u_k\}_{k \in \N},\quad \forall p \in P_{\iota},
  \end{align*}
  see \cite[Section 6.3]{ChCs2019} for more details.
  This special choice of $\{u_p\}_{p \in P_{\iota}}$ also guarantees
  \begin{align*}
    \HH^{n-1}(E(u_p)) = \HH^{n-1}(\{x \in E \colon \nu_x \cap \aA_p \neq \emptyset\}) =0,
  \end{align*} 
  which is exactly \eqref{eq:proof_local_kak_isom.13}, and therefore proves \eqref{eq:proof_local_kak_isom.2} as desired.
\end{proof}

\subsection{Proof of \autoref*{thm:global_Kak_isom}}\label{ssect:proof_glo_Kak_isom}

Recall \autoref{lem:decomp_isometry}, for any isometry $\iota  \in \Isom(\R^n)$ with $\iota \neq \id$, 
we can find a positive integer $k \le 2+\floor{\frac{n}{2}}$, some planes $\{V_i\}_{i=1}^k \subset \GG^2(\R^n)$, and a simple rotation $\rho_i \in \Isom(V_i) \setminus \{\id\}$ for each $i =1,\dots,k$,
such that 
\begin{align*}
  \iota = \rho_k \circ \cdots \circ \rho_1.
\end{align*} 
For each $i =1, \dots, k$, from \autoref{thm:local_Kak_isom}, we can construct a path $P_i \subset \Isom(V_i)$ from $\id$ to $\rho_i$, 
and subsets $\{\wt{E}_p\}_{p \in P_i}$ of $E$, such that 
\begin{align*}
  \abs{\bigcup_{p \in P_i} p(\wt{E}_p)}=0; \quad \HH^{n-1}(\wt{E}_p) = \HH^{n-1}(E), \ \forall p \in P_i.
\end{align*}
Then, using the idea stated in \autoref{ssect:path_Isom_R^n}, 
we can connect these piecewise paths together to obtain a new path $P \coloneq P_k \circ \cdots \circ P_2 \circ P_1 \subset \Isom (\R^n)$, 
and there are subsets $\{E_p\}_{p \in P}$ satisfying
\begin{align*}
  \abs{\bigcup_{p \in P} p(E_p)}=0; \quad \HH^{n-1}(E_p) = \HH^{n-1}(E), \ \forall p \in P,
\end{align*}
which proves \autoref{thm:global_Kak_isom}.

\section{Nikodym type set: proof of \autoref*{thm:isom_Nikodym}}\label{sect:Nikodym}

In this section we consider the Nikodym type set associate with $E \in \Rect$ and prove \autoref{thm:isom_Nikodym}.
We first define our Nikodym type set using the transformations under consideration.

\begin{definition}[$G$-Nikodym type set]\label{def:G-Nikodym}
  Let $E \subset \mathbb{R}^n$ be an $s$-dimensional set with $\HH^s(E) >0$, $G$ be a collection of transformations of $\R^n$.
  A set $F\subset \mathbb{R}^n$ is said to be a $G$-Nikodym type set associated with $E$, if it satisfies the following conditions:
    \begin{enumerate}
          \item $\HH^{n}(F) = 0$;
          \item for every $x \in \mathbb{R}^n$, there exist a transformation $g_x \in G$, and a subset $E_x \subset E$ such that 
                \begin{enumerate}[label=(\alph*)]
                  \item $g_x(E)$ passes through $x$,
                  \item $\HH^s(E_x) = \HH^s(E)$ and $g_x(E_x) \subset F$.
                \end{enumerate}
    \end{enumerate}
\end{definition}

\begin{remark}
  For a $k$-dimensional subspace $W \in \GG^k(\R^n)$, it is easy to check that the Nikodym type set constructed by Falconer \cite{Fal1986} is a $\Isom(\R^n)$-Nikodym type set associate with $W$.
  This also explains why the $G$-Nikodym type set can be regarded as the generalization of classical Nikodym set.
\end{remark}

From the condition ($\mathrm{a}$), the $G$-Nikodym type set associated with $E$ requires some $G$-transformed copy of $E$ to pass through some point in $\R^n$.
This  property suggests that $G$ should contain enough transformations. 
In this section we consider the case $G = \Isom(\R^n)$ and construct the $\Isom(\R^n)$-Nikodym type set associated with $(n-1)$-rectifiable sets.
Later, \autoref{sect:Aff} contains a brief discussion of $\Aff(\R^n)$-Nikodym type set associated with $(n-1)$-rectifiable sets for $n \ge 3$.

For $E \in \Rect$, just as stated in \autoref{rmk:specialcase}, 
we can replace the set $E$ in condition (a) of \autoref{def:G-Nikodym} by any countable disjoint union $\Gamma$ of embedded $C^1$ hypersurfaces.
We will prove the existence of such a slightly more general version of the Nikodym type set.

The argument used here is similar to that in the planar setting of \cite[Theorem 6.9]{ChCs2019}.
Using the result established in \autoref{thm:local_Kak_isom}, we can apply their method to the higher-dimensional setting. 

We first prove that for any countable disjoint union $\Gamma\subset\R^n$ of embedded $C^1$ hypersurfaces, 
the set swept out by $\Gamma$ along a path obtained from \autoref{thm:local_Kak_isom} has nonempty interior.

\begin{lemma}\label{lem:rot-create-interior}
Suppose $\Gamma \subset \R^n$ is a countable disjoint union of embedded $C^1$ hypersurfaces.
Then we can find $V \in \GG^2(\R^n)$, a rotation $\iota \in \Isom(V)$, 
and a path $P_{\iota} \subset \Isom(V)$ connecting $\id$ and $\iota$ as in \autoref{thm:local_Kak_isom},
such that moving the entire set $\Gamma$ along $P_{\iota}$ covers a set with nonempty interior, i.e.,
  \begin{align}\label{eq:nonemptyinter_gamma}
    \inte(\bigcup_{p \in P_{\iota}} p(\Gamma)) \neq \emptyset.
  \end{align}
\end{lemma}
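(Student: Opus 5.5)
The plan is to localize everything near one regular point of $\Gamma$ and one small rotation, and then argue that the whole family of paths produced by \autoref{thm:local_Kak_isom} stays close to the natural path, so that the swept set contains a fixed open set.

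First pick one embedded $C^1$ hypersurface $\Gamma_1\subset\Gamma$ and a point $x_0\in\Gamma_1$ with unit normal $\wh{N}(x_0)$. Choose a $2$-plane $V$ with $\wh{N}(x_0)\in V$, and a point $z\in V$ with $P_V(x_0)-z\perp \wh{N}(x_0)$ and $|P_V(x_0)-z|=1$. Then the rotation field $J(x-z)$, with $J=\RR^{\frac{\pi}{2}}_{0,V}\circ P_V$, is parallel to $\wh{N}(x_0)$ at $x_0$. This is the nondegeneracy case in \eqref{eq:l2.4.2} of \autoref{lem:bettermeas_isom}, where the Jacobian $|\theta|\,|\inner{J(x-z),\wh N(x)}|$ is bounded below near $x_0$. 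Let $\iota=\RR^{\phi}_{z,V}$ with $\phi>0$ small. By the implicit function theorem (a degree argument suffices, since $C^1$ gives only a continuous normal), the map
\[
\Phi(s,t)=\RR^{t\phi}_{z,V}(x(s))
\]
on $U\times(0,1)$, with $x(s)$ a local chart of $\Gamma_1$ near $x_0$, is a local homeomorphism onto a neighborhood of the arc $\{\RR^{t\phi}_{z,V}(x_0)\}$. Its image contains a ball $B(y_0,2\rho)$ with $y_0=\RR^{\phi/2}_{z,V}(x_0)$.

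Next, \autoref{thm:local_Kak_isom} (the proof of which allows all fineness parameters to be arbitrarily large) gives a path $P_\iota\subset\Isom(V)$ from $\id$ to $\iota$ lying in an arbitrarily small neighborhood of the natural path $L^\iota$. Parametrize $P_\iota$ by $\gamma\colon[0,1]\to\Isom(V)$, continuous, with $\sup_t d(\gamma(t),\RR^{t\phi}_{z,V})<\kappa$. Here we need uniform closeness as parametrized paths, not only as sets. The construction via iterated zigzags gives exactly this, since each zigzag stays in a small tube around the segment it replaces and traverses it monotonically.

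Then set $\Psi(s,t)=\gamma(t)(x(s))$. This is a continuous map $\overline{U'}\times[0,1]\to\R^n$ that is uniformly $\kappa'$-close to $\Phi$, for a compact subchart $U'$. Take $\kappa'<\rho$. A standard degree argument shows that $\Psi(U'\times(0,1))\supset B(y_0,\rho)$: $\Phi$ restricted to a neighborhood of $\Phi^{-1}(y_0)$ has local degree $\pm1$, and a homotopy that never passes through points of $B(y_0,\rho)$ on the boundary preserves the degree. Since the whole $\Gamma$ (not a subset) is moved, $\bigcup_{p\in P_\iota}p(\Gamma)\supset B(y_0,\rho)$, which gives \eqref{eq:nonemptyinter_gamma}.

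The main obstacle is the closeness claim. \autoref{thm:local_Kak_isom} only states existence, so I would revisit its proof to check two things. First, the zigzag replacement at each stage can be made $\kappa_k$-close in the parametrized sup metric, with $\sum\kappa_k<\kappa$. Second, the Cauchy limit keeps this bound, and a reparametrization keeps the endpoint fixed. The remaining steps are routine. The boundary-avoidance condition for the degree homotopy requires $\Phi(\partial(U'\times[0,1]))$ to stay at distance at least $2\rho$ from $y_0$, which holds after shrinking $\rho$.
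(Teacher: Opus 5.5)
Your proof is correct, and it takes a genuinely different route from the paper's.

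\textbf{The paper's route.} The paper also chooses $V$ to contain a normal direction of $\Gamma$, but then argues globally. It uses the fact that the natural path of $\iota'\in\Isom(V)$ sweeps a null set exactly when $\nu_x\cap\aA_{\iota'}\neq\emptyset$ for all $x\in\Gamma$, together with the projective geometry of two distinct axes. From this it shows that the degenerate simple isometries in $\Isom(V)$ are essentially of one type (Case~2; Case~1 handles flat $\Gamma$). It then infers that the Kakeya path, which ``involves many different simple rotations'', sweeps a set of positive measure, and simply asserts that this set has nonempty interior.

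That last step is not justified there, for two reasons. A set of positive Lebesgue measure need not have interior points. Also, the limit path $P_\iota$ is not polygonal, so no segment of it survives to which the area formula or the inverse function theorem could be applied.

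\textbf{Your route.} Your argument supplies exactly what is missing:
\begin{itemize}
\item The choice $P_V(x_0)-z\perp\wh{N}(x_0)$ makes the Jacobian in \eqref{eq:l2.4.2} nonzero at $x_0$; equivalently, $n_{x_0}$ misses the axis $z+V^\perp$.
\item The inverse function theorem then gives a ball in the set swept along the natural path.
\item Degree stability transfers this ball to any merely continuous path that is uniformly close to $L^\iota$ as a parametrized path. This is the right notion for a non-polygonal limit.
\end{itemize}

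The closeness you flag does hold. A basic zigzag of fineness $M$ on a segment with vector $y$ passes exactly through the points $\eta^{-1}(my/M)$, by \autoref{lem:approx_N_divides} and \eqref{eq:decomp_N_terms}, and deviates by $o_M(1)$ in between. Every constraint on fineness in the proof of \autoref{thm:local_Kak_isom} is of ``sufficiently large'' type, so the extra requirement is compatible with them. Finally, a uniform limit of paths with fixed endpoints keeps the bound.

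\textbf{Trade-off.} The paper's global argument classifies which simple isometries in $\Isom(V)$ sweep $\Gamma$ onto a null set. That is informative, but it only speaks to measure. Yours is local, needs only one nondegenerate point and one rotation, and yields the interior statement that \autoref{thm:isom_Nikodym} actually uses.

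\textbf{One small tightening.} You claim $\deg(\Phi,U'\times(0,1),y_0)=\pm1$. To justify this, make explicit that $\Phi^{-1}(y_0)\cap(\overline{U'}\times[0,1])$ is a single point, or that the Jacobian has constant sign there. For example, shrink $U'$ so that $\Phi$ is injective on $\overline{U'}\times[0,1]$. This is possible because $\Phi$ is a local diffeomorphism along $\{s_0\}\times[0,1]$ and is injective on that segment, since $x_0\notin z+V^\perp$ and $0<\phi<2\pi$.
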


\begin{proof}
Without loss of generality, we can assume $\Gamma$ is an embedded $C^1$ hypersurface with
a pointwise-defined normal map $N \colon \Gamma \to \P^{n-1}$. Let $N(\Gamma)$ denote its set of normal directions.
As before, $n_x$ and $\nu_x = [n_x:1]$ stands for the normal line and projective normal line to $\Gamma$ at $x\in \Gamma$, respectively.

\textbf{$\bullet$ Case 1:}
If $N(\Gamma) = \{\beta\}$, i.e., $\Gamma$ is contained in a hyperplane with normal direction $\beta \in \P^{n-1}$. 
We can take $V$ to be a 2-plane containing the normal direction $\beta$. 
Then any isometry in $\Isom(V)$ (except for the translation along direction $\RR_{0,V}^{\frac{\pi}{2}}(\beta)$ when $n=3$) will cover a set of positive Lebesgue measure.

\textbf{$\bullet$ Case 2:}
If $\Gamma$ has at least two normal directions.
Let $\beta \in N(\Gamma)$, 
and choose $V \in \GG^2(\R^n)$ containing this direction, i.e., $\beta \in \P^1_V$.
Then we have 
\begin{align}\label{eq:inter_analy0}
  N(\Gamma) \setminus \P^{n-3}_{V^\perp} \neq \emptyset.  
\end{align}

Now we consider the set $A = \{\iota' \in \Isom(V) \colon \abs{\bigcup_{p \in P_{\iota'}} p(\Gamma)} = 0\}$.
We claim that, under the condition \eqref{eq:inter_analy0}, the set $A$ contains  isometries of at most one type
(we regard $\tau_z$ and $\tau_{tz}$ ($t \neq 0$) as the same type; similarly for $\RR^{\theta}_{z,V}$ and $\RR^{t\theta}_{z,V}$).
Recall that 
\begin{align}\label{eq:inter_analy1}
  \bgg[v]{\bigcup_{p \in P_{\iota}} p(\Gamma)} = 0 \iff \nu_x \cap \aA_{\iota} \neq \emptyset,\ \forall x\in \Gamma,
\end{align}
which can be seen from the proof of \autoref{lem:smallmeasisom} and \autoref{lem:bettermeas_isom}.
If we can take $\iota_1, \iota_2 \in A$ of different types, we shall lead to contradictions.
We consider this in different cases.

$\bullet$
If $\tau_1, \tau_2 \in A$ are translations with different directions,  
then combining \eqref{eq:inter_analy1} and the fact that $\aA_{\iota_1}$ and $\aA_{\iota_2}$ are actually different projective subspaces in $\P^{n-1}$ with intersection $\P^{n-3}_{V^\perp}$,
we have $N(x) \in \P^{n-3}_{V^\perp}$ holds for all $x \in \Gamma$.
This contradicts to \eqref{eq:inter_analy0}. 
Thus, if $A$ contains translations, then the translations must have same direction, and therefore $A$ has essentially at most one translation.

$\bullet$
If $\iota_1, \iota_2 \in A$ are of different types, then at least one of them is a simple rotation. 
Assume $\iota_1$ is a rotation with projective axis $\aA_{\iota_1} \subsetneq \P^{n-1}_{\infty}$, and $\iota_2$ has a projective axis $\aA_{\iota_2} \neq \aA_{\iota_1}$.
From \eqref{eq:inter_analy1} and the fact ``two distinct points determine a line'', 
we have, for all $x \in \Gamma$, either
\begin{enumerate}[label=(\alph*)]
  \item $\nu_x \subset \spn_{\P^n} \{\aA_{\iota_1}, \aA_{\iota_2}\}$, or
  \item $N(x) \in \P^{n-3}_{V^\perp}$,
\end{enumerate} 
where $\spn_{\P^n} \{\aA_{\iota_1}, \aA_{\iota_2}\} \in \GG^{n-1}(\P^n)$ contains all the projective lines that touch both $\aA(\iota_1)$ and $\aA(\iota_2)$, 
and can be expressed as $\spn_{\P^n} \{\aA_{\iota_1}, \aA_{\iota_2}\} = \P^{n-1}_{Y\times\R} $ for some $(n-1)$-dimensional affine subspace $Y \subset \R^n$.
The condition \eqref{eq:inter_analy0} avoids the case $(b)$, thus $(a)$ holds for all $x \in \Gamma$, i.e.,
\begin{align*}
  \nu_x = [n_x:1] \subset \spn_{\P^n} \{\aA_{\iota_1}, \aA_{\iota_2}\} = \P^{n-1}_{Y\times\R},\ \forall x \in \Gamma.
\end{align*}
This is equivalent to say $n_x \subset Y$ for all $x \in \Gamma$, 
which is impossible since the hyperplane $Y$ can not contain both the whole embedded $C^1$ hypersurface $\Gamma$ and all its normal lines.
Now we have proved that $A$ has essentially one type of isometries in $\Isom(V)$ (translations with fixed direction or rotations with fixed axis).

Combining Case 1 and Case 2, we have shown that for the 2-plane $V$ we selected (i.e.,  $V$ contains some direction $\beta \in N(\Gamma)$),
all isometries in $\Isom(V)$ (with possibly one type being excluded) acting on $\Gamma$ will cover a set with positive Lebesgue measure.
Thus, for a simple rotation $\iota \in \Isom(V)$ and the isometry path $P_{\iota}$ from $\id$ to $\iota$ constructed as in \autoref{thm:local_Kak_isom} (which involves many different simple rotations),
we have 
\vspace{-0.5\baselineskip}
\begin{align*}
 \bgg[v]{\bigcup_{p \in P_{\iota}}p(\Gamma)} >0,
\end{align*}
which implies that $\bigcup_{p \in P_{\iota}}p(\Gamma)$ must have nonempty interior.
\end{proof}

\vspace{-0.5\baselineskip}
Now we are ready to prove \autoref{thm:isom_Nikodym}, 
which essentially relies on \autoref{thm:local_Kak_isom} and \autoref{lem:rot-create-interior}.
\vspace{-0.5\baselineskip}

\begin{proof}[\textbf{proof of \autoref*{thm:isom_Nikodym}}]
  For a countable disjoint union $\Gamma$ of embedded $C^1$ hypersurfaces, from \autoref{lem:rot-create-interior}, 
  we can find a path $P_{\iota}$ (constructed as in \autoref{thm:local_Kak_isom})
  such that \eqref{eq:nonemptyinter_gamma} holds.
  Then we can cover $\R^n$ by countably many translated copies of $\bigcup_{p \in P_{\iota}}p(\Gamma)$, for example,
  \begin{align}\label{eq:6.2}
    \R^n = \bigcup_{q \in \Q^n} \bigcup_{p \in P_{\iota}}(q+p(\Gamma)).
  \end{align}
  For a rectifiable set $E \in \Rect$ with $\HH^{n-1}(E)< \infty$, let
  \begin{align*}
    F \coloneq \bigcup_{q \in \Q^n} \bigcup_{p \in P_{\iota}}(q+p(E_p)),
  \end{align*}
  where $\{E_p\}_{p \in P_{\iota}}$ are equimeasurable subsets of $E$ obtained from \autoref{thm:local_Kak_isom}.
  Thus
  \begin{align*}
    \abs{F} \le \sum_{q \in \Q^n} \bgg[v]{\bigcup_{p \in P_{\iota}}(q+p(E_p))} =0.
  \end{align*}
  For any $y \in \R^n$, from \eqref{eq:6.2}, we can find a point $q_y \in \Q^n$ and a transformation $p_y \in P_{\iota}$, 
  such that $y \in q_y+p_y(\Gamma) \eqcolon \iota_y(\Gamma)$, where $\iota_y = \tau_{q_y}\circ p_y \in \Isom(\R^n)$.
  Thus, $\iota_y(\Gamma)$ is an isometric copy of $\Gamma$ which passes through $y\in \R^n$.
  Now we can define $E_y \coloneq E_{p_y}$, 
  the equimeadurable property of $\{E_p\}_{p \in P_{\iota}}$ stated in \autoref{thm:local_Kak_isom} then leads to
  \begin{align*}
    \HH^{n-1}(\iota_y(E_y)) = \HH^{n-1}(E).
  \end{align*}
  Also, the construction of $F$ guarantees $\iota_y(E_y) = q_y+p_y(E_{p_y}) \subset F$.

  Thus, by taking $\Gamma$ as the regular part $\wt{E}$ of $E$, we have proved \autoref{thm:isom_Nikodym}.
\end{proof}

\section{Further generalizations to affine transformations}\label{sect:Aff}

In this section we consider the affine transformations of $\R^n$ acting on $(n-1)$-rectifiable sets, where $n\ge3$.
This is inspired by the work on planar similarities in \cite[Section 7]{ChCs2019}.
Many of the techniques used in \autoref{sect:preliminaries}, \autoref{sect:Kak_isom} and \autoref{sect:Nikodym} can be adapted to general affine transformations in $\R^n$.

In \cite[Section 7]{ChCs2019}, 
the authors identify $\R^2$ with $\mathbb{C}^1$ and consider the following transformation:
\begin{align*}
  \sigma_{z,\phi,\alpha} \colon 
  \mathbb{C}^1 \to \mathbb{C}^1, \
  u \mapsto e^{-\phi\sin \alpha}e^{\mathrm{i}\mkern1mu \phi\cos\alpha}(u-z)+z,
\end{align*}
where $z\in \mathbb{C}^1$, $\alpha \in [0,2\pi)$ and $\phi \in \R$.
Then they use such transformations to analyze the orientation-preserving similarity group $\Sim(\R^2)$.
Our discussions of affine transformations in $\R^n$ ($n \ge 3$) are inspired by their planar work.
And it turns out that in higher-dimensional spaces, with the flexibility of choosing 2-planes, 
we can generalize their construction to the broader class of affine transformations instead of similarity transformations.

We use $\Aff(\R^n)$ to denote the group of all orientation-preserving affine transformations in $\R^n$, 
whose elements can be parameterized by $\GL^{+}_n(\R)\times \R^n$, where $\GL^{+}_n(\R) =\{A \in \GL_n(\R)\colon \det A >0\}$.
More precisely, a typical affine transformation (or affinity for simplicity) of $\R^n$ can be expressed as
\begin{align*}
  \sigma_{A,z} \colon 
    \R^n \to \R^n, \
    x \mapsto Ax+z,
\end{align*} 
for $(A,z) \in \GL^{+}_n(\R)\times \R^n$.
We also use $\Sim(\R^n)$ to express the group generated by the similarity transformations of $\R^n$, which contains all isotropic dilations, rotations, translations and their compositions.
$\Sim(\R^n)$ is a subgroup of $\Aff(\R^n)$, 
where the latter contains orientation-preserving anisotropic dilations (i.e., $x=(x_1, \dots, x_n) \mapsto (\lambda_1x_1, \dots, \lambda_nx_n)$, where $\lambda_i >0$ for $i=1,\dots,n$).

First we consider the following transformations related to the planar similarities.

\begin{definition}[$V$-spiral transformation]
  Let $V \in \GG^2(\R^n)$, $z \in V$, $\alpha \in [0,2\pi)$ and $\phi \in \R$. 
  We define the $V$-spiral transformation $\sigma^{\phi,\alpha}_{z,V}$ by 
  \begin{align*}
    \sigma^{\phi,\alpha}_{z,V} \colon 
    \R^n \to \R^n, \
    x \mapsto e^{-\phi\sin \alpha} \RR^{\phi\cos \alpha}_{z,V}(x) + (1-e^{-\phi\sin \alpha})P_{V^\perp}(x),
  \end{align*} 
  where $\RR^{\phi\cos \alpha}_{z,V}$ is a simple rotation as defined in \autoref{def:V_rotat}.
  The affine subspace $z+V^\perp$ is fixed pointwise by $\sigma^{\phi,\alpha}_{z,V}$, and thus serves as the axis of the transformation.
\end{definition}

We introduce the following nonlinear transformation to study the $V$-spiral transformation.
\begin{definition}
  For $V \in \GG^2(\R^n)$, $z \in V$ and $\alpha \in (0,2\pi)\setminus\{\pi\}$, we define
  \begin{align*}
    H^{\alpha}_{z,V} \colon 
    \R^n \to \R^n, \
    x \mapsto H^{\alpha}_{z,V}(x)=
    \begin{cases}
      x , &\text{if} \ x \in z+V^{\perp}, \\
      \RR_{z,V}^{\cot\alpha \ln\abs{P_V(x-z)}} , &\text{if} \ x \not\in z+V^{\perp}.
    \end{cases}
  \end{align*}
\end{definition}
This transformation has several useful properties:
\begin{enumerate}
  \item $H^{\alpha}_{z,V}$ is a diffeomorphism from $\R^n\setminus(z+V^{\perp})$ to itself;
  \item $\abs{\det DH^{\alpha}_{z,V}}=1$ for any $x \not\in z+V^{\perp}$, hence $\abs{H^{\alpha}_{z,V}(A)}=\abs{A}$ for any $A \subset \R^n$;
  \item For $\sigma^{\phi,\alpha}_{z,V}$ with $\sin\alpha \neq 0$, we have $H^{\alpha}_{z,V}\circ\sigma^{\phi,\alpha}_{z,V}(x) = z+e^{-\phi\sin\alpha}P_V(H^{\alpha}_{z,V}(x)-z)+P_{V^\perp}(x)$.
\end{enumerate}
The first two properties can be checked by expressing the vector $P_V(x)$ in polar coordinates centered at $z \in V$.
The third property essentially shows that $H^{\alpha}_{z,V}$ transforms the logarithmic spiral trajectory into a line segment.

\begin{remark}
  For $n=2$, in \cite[Section 7]{ChCs2019}, the authors introduced a measure-preserving map $\Psi \colon \R^2 \to \R^2$ 
  that rotates each circle $\abs{u-z} = \text{const}$ around $z$ by angle $\cot\alpha \log \abs{u-z}$.
  The measure-preserving map $H^{\alpha}_{z,V}$ is the higher-dimensional analog of $\Psi$ for the transformation $\sigma^{\phi,\alpha}_{z,V}$.
\end{remark}

Analogously to \autoref{def:Isom(V)}, we define the following transformation groups:

\begin{definition}[Simple affinity in $V$]
  For fixed $\alpha\in[0,2\pi)$, all $V$-spiral transformations with parameter $\alpha$ and translations in $V$ form the set
  \begin{align*}
    \Sim_{\alpha}(V) \coloneq \{\sigma^{\phi,\alpha}_{z,V} \colon z \in V, \phi \in \R\} \cup \{\tau_z \colon z \in V\}.
  \end{align*}
  All $V$-spiral transformations and translations in $V$ are called simple affinities in $V$, and form the group
  $\Sim(V) \coloneq \bigcup_{\alpha \in [0,2\pi)} \Sim_{\alpha}(V)$.
\end{definition}

Note that $\sigma^{\phi,0}_{x,V} =\sigma^{\phi,\pi}_{x,V} = \RR^{\phi}_{x,V}$,
so all $V$-spiral transformations with $\alpha \equiv 0 \pmod{\pi}$ are simple rotations in $V$.
For $\alpha \equiv \frac{\pi}{2} \pmod{\pi}$, the $V$-spiral transformations correspond to isotropic dilations acting in $V$ given by $\sigma^{\phi,\frac{\pi}{2}}_{x,V}(y) = e^{-\phi}P_V(y-x) +P_{V^\perp}(y)$. 
Thus, $\Sim(V)$ is the lifting of planar similarity transformations (in the 2-plane $V$) to affinities of $\R^n$, which also explains the notation ``$\Sim$''.
The set $\Sim_{\alpha}(V)$ can be regarded as the simple affinities in $V$ with similarity factor $\alpha$.
 
Analogously to \autoref{def:proaxis_isom}-\autoref{def:labelby_V}, we can introduce the projective representations and $V\times\R$-parametrization for the simple affinity $\sigma \in \Sim_{\alpha}(V)$. 
More precisely, let $V \in \GG^2(\R^n)$ and $\alpha \in [0,2\pi)$. 
For $\sigma \in \Sim_{\alpha}(V)\setminus\{\id\}$,
we define its projective axis $\wt{\aA}_{\sigma}$ and $V\times\R$-parametrization $\wt{\eta}$ as
  \begin{align*}
    &\wt{\aA}_{\sigma} \coloneq  
    \begin{cases}
      [z+V^{\perp}:1] , &\text{if} \ \sigma = \sigma^{\phi,\alpha}_{z,V}, \\
      [(z)^{\perp}:0], &\text{if} \ \sigma = \tau_z;
    \end{cases} \\
    &\wt{\eta}(\sigma)  \coloneq 
        \begin{cases}
          (\phi z,\phi), &\text{if} \ \sigma = \sigma^{\phi,\alpha}_{z,V}, \\
          (\RR^{\frac{\pi}{2}}_{0,V}(z),0), &\text{if} \ \sigma = \tau_z.
        \end{cases}
  \end{align*}
  Then the projective center is 
  $\wt{\cC}_{\sigma} \coloneq  \wt{\aA}_{\sigma} \cap \P^2_{V\times\R}$.
  The axis-lifting operator $\aA \colon \mathscr{P}(\P^2_{V\times \R}) \to \mathscr{P}(\P^n)$ also applies in this setting.
The parametrization also has a decomposition property analogous to \autoref{lem:approx_N_term}, 
and the Venetian blind iteration method described in \autoref{sect:VBandzigzag} can be applied to construct paths of affinities in $\Sim_{\alpha}(V)$.

Generally, we can reduce any affinity into simple affinities, which leads to the following:

\begin{lemma}\label{lem:decomp_affinity}
  For $n \ge 3$, any orientation-preserving affine transformation in $\Aff(\R^n)$ can be decomposed as the composition of finitely many simple affinities associate with 2-planes and similarity factors.
  Equivalently, $\Aff(\R^n)$ is generated by $\{\Sim_{\alpha}(V)\}_{V\in \GG^2(\R^n), \alpha \in [0,2\pi)}$.
\end{lemma}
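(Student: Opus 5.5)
We need to show that $\Aff(\R^n)$, $n\ge 3$, is generated by the sets $\Sim_{\alpha}(V)$. The plan is to reduce to generating $\GL^+_n(\R)$ together with translations.

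\textbf{Translations and the linear part.} Translations $\tau_z$ with $z\in V$ already lie in $\Sim_\alpha(V)$. Every $z\in\R^n$ lies in some 2-plane, so all translations are available. Writing $\sigma_{A,z}=\tau_z\circ\sigma_{A,0}$, it then suffices to express every $A\in\GL^+_n(\R)$ as a product of simple affinities. For this we use the $V$-spiral maps with $z=0$. These are linear, since $\sigma^{\phi,\alpha}_{0,V}=e^{-\phi\sin\alpha}\RR^{\phi\cos\alpha}_{0,V}+(1-e^{-\phi\sin\alpha})P_{V^\perp}$. Acting on $V$ this is $e^{-\phi\sin\alpha}R(\phi\cos\alpha)$, and acting on $V^\perp$ it is the identity, because $\RR_{0,V}$ fixes $V^\perp$ pointwise.

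\textbf{Rotations and diagonal matrices.} Taking $\alpha=0$ gives all simple rotations. By \autoref{lem:decomp_rotat} these generate $\SO(n)$. Taking $\alpha=\pi/2$ gives the maps $D_V(\lambda)=\lambda P_V+P_{V^\perp}$ with $\lambda=e^{-\phi}>0$. Using the polar decomposition $A=QS$ with $Q\in\SO(n)$ and $S$ symmetric positive definite, and diagonalizing $S=O\,\mathrm{diag}(\lambda_1,\dots,\lambda_n)O^{\Tt}$ with $O\in\SO(n)$, it remains to produce every positive diagonal matrix $\mathrm{diag}(\lambda_1,\dots,\lambda_n)$ in the coordinate basis $e_1,\dots,e_n$.

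\textbf{Solving for the diagonal factors.} Let $V_{ij}=\spn\{e_i,e_j\}$. Then $D_{V_{ij}}(\mu)$ is the diagonal matrix with $\mu$ in positions $i,j$ and $1$ elsewhere. Passing to logarithms, we must write every vector $(\log\lambda_1,\dots,\log\lambda_n)\in\R^n$ as a real linear combination of the vectors $e_i+e_j$ with $i<j$. These vectors span $\R^n$ exactly when $n\ge 3$. Indeed,
\begin{align*}
  2e_1=(e_1+e_2)+(e_1+e_3)-(e_2+e_3),
\end{align*}
and similarly for every other index, while each $e_k$ with $k\ge 4$ follows from $(e_1+e_k)-e_1$. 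Since diagonal matrices commute, the product of the corresponding $D_{V_{ij}}(\mu_{ij})$ gives the required diagonal matrix.

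\textbf{The main obstacle.} The only real obstacle is anisotropic stretching. A single $\Sim(V)$ only scales two directions equally, and this is exactly where $n\ge 3$ enters: for $n=2$ the vectors $e_i+e_j$ span only the line through $e_1+e_2$, and one obtains only $\Sim(\R^2)$. Collecting the rotations, the diagonal factors and the translation gives a finite product of simple affinities for every element of $\Aff(\R^n)$.
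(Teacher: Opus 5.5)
Your proof is correct and follows the same route as the paper. A polar/singular value decomposition reduces everything to simple rotations (via \autoref{lem:decomp_rotat}) and a positive diagonal matrix, which is then factored into isotropic dilations $\sigma^{\phi,\pi/2}_{0,V_{ij}}$ in coordinate planes. Your observation that the vectors $e_i+e_j$ span $\R^n$ exactly when $n\ge 3$ is the logarithmic form of the paper's explicit $n=3$ factorization of $\Lambda$, and it supplies the general-$n$ step that the paper leaves as ``straightforward.''
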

This lemma is an analog of \autoref{lem:decomp_isometry}; we give a proof sketch. 
For any $A \in \GL^{+}_n(\R)$, 
by the singular value decomposition of $A$, 
we can find $U,W \in \SO(n)$ and a diagonal matrix $\Lambda$ with positive diagonal elements such that $A = U \Lambda W$.
Since all rotations and translations in $\R^n$ can be expressed by simple rotations (as proved in \autoref{lem:decomp_isometry}),
and all simple rotations are naturally spiral transformations, we only need to handle the dilation part corresponding to the matrix $\Lambda$.
We can always decompose $\Lambda$ into $n$ simpler diagonal matrices, each corresponding to an isotropic dilation in a 2-plane;
then the dilation part can be expressed as the composition of $n$ spiral transformations.
Here we give a concrete decomposition of $\Lambda$ for $n=3$. Suppose the diagonal elements of $\Lambda$ are $\lambda_1, \lambda_2, \lambda_3 >0$, then
\begin{align*}
  \begin{pmatrix}
    \lambda_1 & & \\
    & \lambda_2 & \\
    & & \lambda_3
  \end{pmatrix}
  = 
  \begin{pmatrix}
    \sqrt{\frac{\lambda_1\lambda_2}{\lambda_3}} & & \\
    & \sqrt{\frac{\lambda_1\lambda_2}{\lambda_3}} & \\
    & & 1
  \end{pmatrix}
  \begin{pmatrix}
    1 & & \\
    & \sqrt{\frac{\lambda_2\lambda_3}{\lambda_1}} & \\
    & & \sqrt{\frac{\lambda_2\lambda_3}{\lambda_1}}
  \end{pmatrix}
  \begin{pmatrix}
    \sqrt{\frac{\lambda_1\lambda_3}{\lambda_2}} & & \\
    & 1 & \\
    & & \sqrt{\frac{\lambda_1\lambda_3}{\lambda_2}}
  \end{pmatrix},
\end{align*}
where each matrix on the right-hand side corresponds to a spiral transformation.
This decomposition can be extended to all $n \ge 3$ in a straightforward manner, so we omit the details of the proof.

Now we introduce measure estimates for sets moved by $V$-spiral transformations. 
These are analogs of \autoref{lem:measisom} and \autoref{lem:smallmeasisom}.
For a simple affinity $\sigma$, let $L^{\sigma}$ denote its natural path from $\id$ to $\sigma$.

\begin{lemma}[trivial measure estimate for simple affinity]\label{lem:measaff}
  Let $V \in \GG^2(\R^n)$ and $\sigma\in \Sim(V)$ be a simple affinity in $V$.
  Suppose $E \subset B(0,r)$ is a bounded $(n-1)$-rectifiable set in $\R^n$,
  then the measure of the set covered by moving $E$ along $\sigma$ satisfies
  \begin{align}
    \abs{\bigcup_{p \in L^{\sigma}} p(E)} \lesssim_r \abs{\wt{\eta}(\sigma)} \HH^{n-1}(E),
  \end{align}
  where $\wt{\eta}(\sigma) \in V\times \R$ is defined as above.
\end{lemma}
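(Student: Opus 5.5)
We mirror the proof of \autoref{lem:measisom}: first derive an analogue of \autoref{lem:bettermeas_isom} for $V$-spiral transformations via the area formula, then bound the integrand crudely using $E\subset B(0,r)$. For translations $\sigma=\tau_z$ nothing is new: $\wt{\eta}(\tau_z)=(\RR^{\frac{\pi}{2}}_{0,V}(z),0)$ has norm $\abs{z}$, and \eqref{eq:l2.0.1} gives the bound $\abs{z}\HH^{n-1}(E)$ directly. So the work is for $\sigma=\sigma^{\phi,\alpha}_{z,V}$ with $z\in V$, where $\abs{\wt{\eta}(\sigma)}=\abs{\phi}\sqrt{1+\abs{z}^2}\approx\abs{\phi}(1+\abs{z})$. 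As before, we may assume $\phi\ge0$ and restrict to the regular part $\wt{E}=\bigsqcup_i E_i$ with parametrizations $x\colon U_i\to E_i$.

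On each $E_i$ we set $\Phi(s,t)=\sigma^{t\phi,\alpha}_{z,V}(x(s))$. Writing $x=z+P_V(x-z)+P_{V^\perp}(x)$ (recall $z\in V$), the map acts as the identity on the $V^\perp$-component and as the planar similarity $w\mapsto e^{-t\phi\sin\alpha}\RR^{t\phi\cos\alpha}_{0,V}w$ on $w=P_V(x-z)$. Hence
\begin{align*}
  \partial_t\Phi(s,t)=D_t\,\phi\,\bigl(-\sin\alpha\,P_V(x(s)-z)+\cos\alpha\,J(x(s)-z)\bigr),
\end{align*}
where $J=\RR^{\frac{\pi}{2}}_{0,V}\circ P_V$ as in \autoref{lem:bettermeas_isom}, and $D_t$ is the linear map equal to $e^{-t\phi\sin\alpha}\RR^{t\phi\cos\alpha}_{0,V}$ on $V$ and $\id$ on $V^\perp$. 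Likewise $\partial_s\Phi=D_t\,\partial_s x$. Thus $\mathrm{J}_n\Phi=\det(D_t)\,\abs{\det(\partial_s x,\ w_\phi)}$, with $\det D_t=e^{-2t\phi\sin\alpha}$ and $w_\phi$ the bracketed vector times $\phi$. Decomposing $w_\phi$ in the orthonormal frame $\{u_1,\dots,u_{n-1},u_n\}$ exactly as in \eqref{eq:l2.4.2} gives
\begin{align*}
  \mathrm{J}_n\Phi(s,t)=e^{-2t\phi\sin\alpha}\,\phi\,\abs{\inner{-\sin\alpha P_V(x-z)+\cos\alpha J(x-z),\wh{N}(x)}}\,\mathrm{J}_{n-1}x(s)\le e^{-2t\phi\sin\alpha}\,\phi\,\abs{P_V(x-z)}\,\mathrm{J}_{n-1}x(s).
\end{align*}
Integrating over $U_i\times[0,1]$ and applying the area formula as in \eqref{eq:l2.5.1}, using $\abs{P_V(x-z)}\le r+\abs{z}$, and summing over $i$ would yield
\begin{align*}
  \abs{\bigcup_{p\in L^{\sigma}}p(E)}\le \Bigl(\int_0^1 e^{-2t\phi\sin\alpha}\,\dd t\Bigr)\,\phi\,(r+\abs{z})\,\HH^{n-1}(E).
\end{align*}

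The main obstacle is the exponential factor $\int_0^1 e^{-2t\phi\sin\alpha}\,\dd t$, which is not bounded uniformly when $\phi\sin\alpha$ is very negative. I will handle it as follows. Since the statement concerns small affinities in practice, the factor is $\lesssim 1$ whenever $\abs{\phi}\lesssim 1$, and the estimate is only needed in that regime (as the zigzag construction uses only small pieces). If a bound for arbitrary $\phi$ is required, I would instead let the implicit constant depend also on a bound for $\abs{\phi}$, or split $L^\sigma$ into $N$ pieces via the analogue of \eqref{eq:decomp_N_terms}. Note that this splitting does not remove the factor: the images $\sigma^{t\phi,\alpha}_{z,V}(E)$ grow, so the $r$-dependence would worsen. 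I will therefore state the constant as depending on $r$ with $\abs{\wt{\eta}(\sigma)}$ bounded, which is how the lemma is used. With that caveat, combining the translation and spiral cases gives $\lesssim_r\abs{\wt{\eta}(\sigma)}\HH^{n-1}(E)$.
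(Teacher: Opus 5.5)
Your argument is the paper's. The paper composes with the measure-preserving straightening map $H^{\alpha}_{z,V}$ before applying the area formula. Since $\abs{\det DH^{\alpha}_{z,V}}=1$, its Jacobian $\mathrm{J}_nH$ is exactly your $\mathrm{J}_n\Phi$. It then bounds the integrand and absorbs $\int_0^1 e^{-2t\phi\sin\alpha}\,\dd t$ into a constant $C_{\alpha,r}$.

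Your caveat is correct, and it is a point the paper passes over. That factor is not bounded uniformly in $\phi$, and the statement genuinely fails for large dilations. Take $V=\spn\{e_1,e_2\}$, $\sigma=\sigma^{\phi,3\pi/2}_{0,V}\colon x\mapsto e^{\phi}P_V(x)+P_{V^\perp}(x)$ and $E=\{1\}\times[-1,1]^{n-1}$. The swept set contains $\{1\le x_1\le e^{\phi},\ \abs{x_2}\le x_1,\ \abs{x_j}\le 1\ (j\ge 3)\}$, of measure $2^{n-2}(e^{2\phi}-1)$. By contrast, $\abs{\wt{\eta}(\sigma)}\HH^{n-1}(E)=2^{n-1}\phi$, so the ratio is exactly your factor $\frac{e^{2\phi}-1}{2\phi}$. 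Hence the lemma holds only with a constant depending on $r$ and on an upper bound for $\abs{\wt{\eta}(\sigma)}$. That is precisely the regime in which it is applied to the small pieces of the zigzag paths.
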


\begin{lemma}[small measure estimate for simple affinity]\label{lem:smallmeasaff}
  Let $V \in \GG^2(\R^n)$, $\alpha \in [0,2\pi)$. 
  Suppose $E \in \Rect$ satisfies $E \subset B(0,r)$, and let $\delta >0$ be sufficiently small (depending on $r$).
  Let $\sigma \in \Sim_{\alpha}(V)$ be a simple affinity with projective axis $\wt{\aA}_{\sigma} \in \GG^{n-2}(\P^n)$.
  Let $n_x$ be the normal line at $x \in E$, and set $(\nu_x)_{\alpha}=[\RR^{\alpha}_{x,V}(n_x):1]$ to be the rotated projective normal line.
  Denoting 
  \vspace{-0.3\baselineskip}
  \begin{align*}
    E^{\sigma, \delta} \coloneq \{x \in E \colon (\nu_x)_{\alpha} \cap B_{\P^n}(\wt{\aA}_{\sigma},\delta) \neq \emptyset\} ,
  \end{align*}
  \vspace{-0.3\baselineskip}
  then we have
\vspace{-0.3\baselineskip}
  \begin{align}
    \abs{\bigcup_{p \in L^{\sigma}} p(E^{\sigma, \delta})} 
     \lesssim_{\alpha,r} \delta \abs{\wt{\eta}(\sigma)} \HH^{n-1}(E^{\sigma, \delta}).
  \end{align}
\end{lemma}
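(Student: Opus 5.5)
The plan is to follow the proof of \autoref{lem:smallmeasisom} step by step, with one new ingredient: first compute the swept-out measure for a spiral motion by the area formula, exactly as in \autoref{lem:bettermeas_isom}. For a translation $\sigma=\tau_z$ the rotated normal line plays no role in the estimate, because $\Sim_\alpha(V)$ contains the same translations. There I would use Case~1 of the proof of \autoref{lem:smallmeasisom} essentially verbatim, after checking that the $\alpha$-rotation of $n_x$ about $x$ inside the direction $V$ does not spoil it. The real case is $\sigma=\sigma^{\phi,\alpha}_{z,V}$.

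For $\sigma=\sigma^{\phi,\alpha}_{z,V}$, parametrize $E_i$ by $x(s)$ and set $\Phi(s,t)=\sigma^{t\phi,\alpha}_{z,V}(x(s))$. Differentiating in $t$ gives
\begin{align*}
\partial_t\Phi = \phi\, D\sigma^{t\phi,\alpha}_{z,V}\bigl(-\sin\alpha\, P_V(x-z)+\cos\alpha\, J(x-z)\bigr), \qquad J=\RR^{\frac{\pi}{2}}_{0,V}\circ P_V .
\end{align*}
The derivative in $s$ is $D\sigma^{t\phi,\alpha}_{z,V}\,\partial_s x$. Since $\det D\sigma^{t\phi,\alpha}_{z,V}=e^{-2t\phi\sin\alpha}$, the Jacobian becomes
\begin{align*}
\mathrm{J}_n\Phi = e^{-2t\phi\sin\alpha}\,\abs{\phi}\,\abs{\inner{w(x),\wh N(x)}}\,\mathrm{J}_{n-1}x(s),
\end{align*}
where $w(x)=-\sin\alpha\,P_V(x-z)+\cos\alpha\, J(x-z)$ is $P_V(x-z)$ rotated by angle $\frac{\pi}{2}+\alpha$ in $V$. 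For $\abs{\phi}\lesssim_r 1$, or more generally after absorbing $\int_0^1 e^{-2t\phi\sin\alpha}dt$ into the $\lesssim_{\alpha,r}$ constant, this yields
\begin{align*}
\abs{\bigcup_{p\in L^\sigma}p(E)} \lesssim \abs{\phi}\int_E \abs{\inner{w(x),\wh N(x)}}\,d\HH^{n-1}(x).
\end{align*}
Rotating $w$ back by $-\alpha$ in $V$, we get $\abs{\inner{w(x),\wh N(x)}}=\abs{\inner{J(x-z),\RR^{-\alpha}_{0,V}\wh N(x)}}$. This is the rotational expression from \eqref{eq:l2.5.4}, with the normal replaced by its rotation in $V$. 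By the computation in \eqref{eq:l2.5.4} it is at most
\begin{align*}
\abs{P_V(\wh N(x))}\; d_{\R^n}\bigl(\RR^{\pm\alpha}_{x,V}(n_x),\, z+V^\perp\bigr).
\end{align*}
The sign convention has to be matched with the definition $(\nu_x)_\alpha=[\RR^\alpha_{x,V}(n_x):1]$, where the line through $x$ is rotated about the axis through $x$. This is the analog of \autoref{lem:bettermeas_isom}, and \autoref{lem:measaff} follows from it immediately.

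The projective step then copies Case~2 of \autoref{lem:smallmeasisom}, with $\nu_x$ replaced by $(\nu_x)_\alpha$. Split $B_{\P^n}(\wt\aA_\sigma,\delta)$ into a bounded affine part and a neighbourhood of $[V^\perp:0]$. On the affine part, $d(\RR^\alpha_{x,V}n_x,z+V^\perp)\lesssim_r\delta$. On the part at infinity, the direction of $\RR^\alpha_{x,V}n_x$ lies within $C_n\delta$ of $[V^\perp:0]$. Since the rotation preserves $\abs{P_V(\cdot)}$, this gives $\abs{P_V(\wh N(x))}\lesssim\delta$. Combining the two, the integrand is bounded by $C_{n,r}\delta(1+\abs z)$. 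Finally $\abs{\wt\eta(\sigma)}=\abs\phi\sqrt{1+\abs z^2}$, which gives the claim.

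I expect the main obstacle to be the Jacobian step. We must track exactly how the spiral velocity field $w$ relates to the rotated normal, getting the angle $\alpha$ and its sign right relative to $(\nu_x)_\alpha$. We must also control the dilation factor $e^{-2t\phi\sin\alpha}$ uniformly, which is where the dependence on $\alpha$ (and on the size of $\phi$) enters the constant. One could alternatively conjugate by $H^\alpha_{z,V}$, which is measure preserving and straightens the spiral into a radial dilation. However, that map distorts normals, so the direct computation seems cleaner.
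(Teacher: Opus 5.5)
Your treatment of the genuine spiral $\sigma^{\phi,\alpha}_{z,V}$ is essentially the paper's argument. The paper composes with the measure-preserving map $H^{\alpha}_{z,V}$, but since $\abs{\det DH^{\alpha}_{z,V}}=1$ one has $\mathrm{J}_n(H^{\alpha}_{z,V}\circ\Phi)=\mathrm{J}_n\Phi$. The Jacobian it writes down is therefore exactly your $\abs{\phi}e^{-2t\phi\sin\alpha}\abs{\inner{w,\wh{N}}}\mathrm{J}_{n-1}x$, and it then proceeds with Case~2 of \autoref{lem:smallmeasisom} for the rotated line, as you do. There are, however, two genuine problems, and the paper's sketch has both of them.

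\textbf{Translations.} The claim that the rotated normal line ``plays no role'' for translations is false, because it is what defines $E^{\sigma,\delta}$. For $\sigma=\tau_z$ with $z\in V$, membership in $E^{\sigma,\delta}$ only says that the direction $\RR^{\alpha}_{0,V}\wh{N}(x)$ is $C_n\delta$-close to $[z^\perp:0]$. That means $\abs{\inner{\wh{N}(x),\RR^{-\alpha}_{0,V}\wh{z}}}\lesssim\delta$, whereas \eqref{eq:l2.0.1} needs $\abs{\inner{\wh{N}(x),\wh{z}}}\lesssim\delta$; the two differ unless $\sin\alpha=0$.

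Concretely, take $n=3$, $V=\spn\{e_1,e_2\}$, $\alpha=\pi/2$, $E=\{0\}\times[0,1/2]^2$ and $\sigma=\tau_{e_1}$. Then $(\nu_x)_{\pi/2}=[x+\R e_2:1]$ contains $[e_2:0]\in[e_1^\perp:0]=\wt{\aA}_\sigma$, so $E^{\sigma,\delta}=E$ for every $\delta$. The swept set is a slab of measure $\HH^2(E)$, while $\abs{\wt{\eta}(\sigma)}=1$, so the claimed bound $\lesssim\delta\,\HH^2(E)$ fails as $\delta\to0$.

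So for translations with $\sin\alpha\neq0$ the statement is false as written. The paper's reduction (``otherwise $\sigma$ is an isometry'') has the same hole; its remark that $(\nu_x)_\alpha=\nu_x$ for $\alpha=\pi/2,3\pi/2$ is incorrect. The repair belongs in the definitions: give $\tau_z\in\Sim_\alpha(V)$ the axis $[(\RR^{\alpha}_{0,V}z)^\perp:0]$, i.e.\ rotate $z$ by the same rotation that is applied to the normal. Then $\inner{\wh{N}(x),z}=\inner{\RR^{\alpha}_{0,V}\wh{N}(x),\RR^{\alpha}_{0,V}z}$, and Case~1 goes through verbatim.

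\textbf{The sign.} You cannot leave $\RR^{\pm\alpha}$ open. With the paper's spiral (factor $e^{-\phi\sin\alpha}$, rotation angle $\phi\cos\alpha$), your own identity $\abs{\inner{w,\wh{N}}}=\abs{\inner{J(x-z),\RR^{-\alpha}_{0,V}\wh{N}}}$ shows that the controlling line is $\RR^{-\alpha}_{x,V}(n_x)$, not $\RR^{\alpha}_{x,V}(n_x)$ as in the statement.

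With $+\alpha$ the estimate fails whenever $\sin2\alpha\neq0$. Let $E$ be a small piece, away from $z+V^\perp$, of a cylinder over a logarithmic spiral in $V$ centred at $z$, chosen so that every $\RR^{\alpha}_{x,V}(n_x)$ meets $z+V^\perp$. Then $E^{\sigma,\delta}=E$. But the velocity $\phi\RR^{\frac{\pi}{2}+\alpha}_{0,V}P_V(x-z)$ has normal component $\abs{\phi}\abs{\sin2\alpha}\abs{P_V(x-z)}$, so for small $\abs{\phi}$ the swept measure is of order $\abs{\phi}\HH^{n-1}(E)$, not $O(\delta)$.

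The paper's $+\alpha$ comes from writing $\sin\alpha\,\RR^{0}_{0,V}-\cos\alpha\,\RR^{\frac{\pi}{2}}_{0,V}=\RR^{\frac{\pi}{2}-\alpha}_{0,V}$ on $V$; it actually equals $\RR^{\alpha-\frac{\pi}{2}}_{0,V}$. So commit to $-\alpha$ (or flip the spiral's orientation), and use the same sign in the translation axes.

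Finally, as you observed, the factor $\int_0^1e^{-2t\phi\sin\alpha}\,dt$ makes the constant depend on a bound for $\abs{\phi}$, not only on $\alpha$ and $r$. This is harmless for the short pieces produced by the Venetian blind construction, but it belongs in the statement.
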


The proofs of \autoref{lem:measaff} and \autoref{lem:smallmeasaff} require an analog of \autoref{lem:bettermeas_isom}, 
with the new function 
\begin{align*}
  \Phi \colon U_i\times [0,1] \to \R^n,\  (s,t) \mapsto \sigma^{t\phi,\alpha}_{z,V}(x(s))
\end{align*}
replacing $\Phi_2$.
Assume, without loss of generality, that $\sigma = \sigma^{\phi,\alpha}_{z,V}$ with $\sin\alpha \neq 0$; 
otherwise $\sigma$ is an isometry and we are reduced to \autoref{lem:measisom} and \autoref{lem:smallmeasisom}  
(note that $(\nu_x)_\alpha = \nu_x$ when $\alpha = \pi/2$ or $3\pi/2$).
Using the measure-preserving map $H^{\alpha}_{z,V}$ and setting $H \coloneq H^{\alpha}_{z,V}\circ\Phi$, 
the measure of the set covered by moving $E_i$ (a continuous piece of $E$) along $\sigma$ can be estimated as
\begin{align*}
  \abs{\bigcup_{p \in L^{\sigma}}p(E_i)} = \abs{\Phi(U_i\times [0,1])} = \abs{H(U_i\times [0,1])}.
\end{align*}
We therefore study the map
\begin{align*}
  H \colon 
  U_i\times [0,1] \to \R^n, \ 
  (s,t) \mapsto H^{\alpha}_{z,V}\circ\sigma^{t\phi,\alpha}_{z,V}(x(s)) = z+e^{-t\phi\sin\alpha}P_V(H^{\alpha}_{z,V}(x(s))-z)+P_{V^\perp}(x(s)).
\end{align*}
Since $E_i\cap (z+V^{\perp})$ is $\HH^{n-1}$-null, $H$ is differentiable $\HH^n$-almost everywhere.
Then the area formula yields 
\vspace{-0.5\baselineskip}
\begin{align*}
  \abs{H(U_i\times [0,1])} 
  \le \int_{\R^n} N(H,y) \dd \HH^{n}(y) 
  = \int_{U_i\times [0,1]} \mathrm{J}_n H(s,t) \dd(s,t).
\end{align*}
A computation analogous to \eqref{eq:l2.2}-\eqref{eq:l2.4.2} gives
\begin{align*}
    \mathrm{J}_n H(s,t)
    &=\abs{\phi} e^{-2t\phi\sin\alpha} \abs{ \inner{(\sin\alpha \RR^{0}_{0,V}-\cos\alpha\RR^{\frac{\pi}{2}}_{0,V})P_V\bg{x(s)-z}, \wh{N}(x(s))}}\cdot \mathrm{J}_{n-1} x(s) \\
    &=\abs{\phi} e^{-2t\phi\sin\alpha} \abs{ \inner{\RR^{\frac{\pi}{2}-\alpha}_{0,V}P_V\bg{x(s)-z}, \wh{N}(x(s))}}\cdot \mathrm{J}_{n-1} x(s),
  \end{align*}
  where $\wh{N}$ is the classical normal map.
  Then similar to the estimate \eqref{eq:l2.5.4}, we have
  \begin{align*}
    &\abs{ \inner{\RR^{\frac{\pi}{2}-\alpha}_{0,V}P_V\bg{x(s)-z}, \wh{N}(x(s))}} =    \abs{ \inner{\RR^{\frac{\pi}{2}}_{0,V}P_V\bg{x(s)-z}, \RR^{\alpha}_{0,V}P_V\wh{N}(x(s))}} \\
    = &\abs{P_V\wh{N}(x(s))} \abs{P_V(x(s)-z)} \abs{ \sin \angle(P_V\bg{x(s)-z}, \RR^{\alpha}_{0,V}P_V\wh{N}(x(s)))} \\
    \le &\abs{P_V\wh{N}(x(s))} d_{\R^n}(\RR^{\alpha}_{x,V}(n_x), z+V^{\perp}).
  \end{align*}
  By an analysis similar to that in \autoref{lem:bettermeas_isom}, we obtain
    \begin{align*}
    \abs{\bigcup_{p \in L^{\sigma}} p(E)} 
    &\le \abs{\phi} \int_{E} \int_{0}^{1}e^{-2t\phi\sin\alpha}   \abs{P_V(\wh{N}(x))} d_{\R^n}(\RR^{\alpha}_{x,V}(n_x), z+V^\perp)  \dd t \dd \HH^{n-1}(x) \\
    &\le  C_{\alpha,r}\abs{\phi} \int_{E} \abs{P_V(\wh{N}(x))} \cdot d_{\R^n}(\RR^{\alpha}_{x,V}(n_x), z+V^\perp)  \dd \HH^{n-1}(x).
  \end{align*}
  Then, proceeding analogously to the proofs of \autoref{lem:measisom} and \autoref{lem:smallmeasisom}, we may apply this estimate to obtain \autoref{lem:measaff} and \autoref{lem:smallmeasaff}.

  With these basic lemmas, we can use similar methods given in \autoref{sect:Kak_isom} and \autoref{sect:Nikodym} 
  to study the Kakeya needle problem and Nikodym type set in the setting of affine transformations in $\R^n$.
  Finally, we can get \autoref{thm:global_Kak_aff} and \autoref{thm:aff_Nikodym} as desired.

% References
%\bibliographystyle{plain}
\bibliographystyle{myabbrv}

%\phantomsection       %% hyperlink for `Reference'
%\addcontentsline{toc}{section}{\refname}     %% add `reference' to contents

\bibliography{reference}

\newpage

\end{document}